\newtheoremstyle{mystyle}{}{}{\slshape}{2pt}{\scshape}{.}{ }{} 
\newtheorem{thm}{Theorem}[section]
\newtheorem{cor}[thm]{Corollary}
\newtheorem{prop}[thm]{Proposition}
\newtheorem{lem}[thm]{Lemma}
\newtheorem{fact}[thm]{Fact}
\newtheorem{conjecture}[thm]{Conjecture}
\newtheorem{problem}[thm]{Question}
\theoremstyle{definition}
\newtheorem{defi}[thm]{Definition}
\newtheorem{defn}[thm]{Definition}
\theoremstyle{mystyle}
\newtheorem{ex}[thm]{Example}
\theoremstyle{remark}
\newtheorem{rem}[thm]{Remark}
\newcommand{\monster}{\mathcal U}
\newcommand{\M}{\mathcal U}
\newcommand{\NIP}{NIP}
\newcommand{\Th}{Th}
\newcommand{\note}{\textbf}
\DeclareMathOperator{\tp}{tp}
\DeclareMathOperator{\Stab}{Stab}
\DeclareMathOperator{\FGen}{FGen}
\DeclareMathOperator{\Av}{Av}
\DeclareMathOperator{\Aut}{Aut}
\DeclareMathOperator{\Alt}{Alt}
\DeclareMathOperator{\ext}{ext}
\DeclareMathOperator{\mes}{\mathfrak{M}}
\DeclareMathOperator{\Def}{Def}
\DeclareMathOperator{\bdd}{bdd}
\DeclareMathOperator{\conv}{conv}
\DeclareMathOperator{\Inv}{Inv}
\def\indsym#1#2{%
 \setbox0=\hbox{$\m@th#1x$}%
 \kern\wd0%
 \hbox to 0pt{\hss$\m@th#1\mid$\hbox to 0pt{$\m@th#1^{#2}$\hss}\hss}%
 \lower.9\ht0\hbox to 0pt{\hss$\m@th#1\smile$\hss}%
 \kern\wd0}
\def\nindsym#1#2{%
 \setbox0=\hbox{$\m@th#1x$}%
 \kern\wd0%
 \hbox to 0pt{\hss$\m@th#1\not$\kern1.4\wd0\hss}
 \hbox to 0pt{\hss$\m@th#1\mid$\hbox to 0pt{$\m@th#1^{#2}$\hss}\hss}%
 \lower.9\ht0\hbox to 0pt{\hss$\m@th#1\smile$\hss}%
 \kern\wd0}
\newcommand*\samethanks[1][\value{footnote}]{\footnotemark[#1]}
\title{Definably amenable NIP groups}
\author{Artem Chernikov\thanks{The research leading to this paper has been partially supported by the European Research Council under the European Union's Seventh Framework Programme (FP7/2007-2013)/ERC Grant Agreement No. 291111 and by ValCoMo (ANR-13-BS01-0006).} \thanks{Partially supported by the Fondation Sciences Mathematiques de Paris (ANR-10-LABX-0098), by NSF (grants DMS-1600796 and DMS-1651321) and by the Sloan foundation}   and Pierre Simon\samethanks[1] \thanks{Partially supported by NSF (grant DMS-1665491), and by the Sloan foundation.}}
\begin{document}
\maketitle
\begin{abstract}
We study definably amenable NIP groups. We develop a theory of generics showing that various definitions considered previously coincide, and study invariant measures. As applications, we characterize ergodic measures, give a proof of the conjecture of Petrykowski connecting existence of bounded orbits with definable amenability in the NIP case, and prove the Ellis group conjecture of Newelski and Pillay connecting the model-theoretic connected component of an NIP group with the ideal subgroup of its Ellis enveloping semigroup.
\end{abstract}

\section{Introduction}

In the same way as algebraic or Lie groups are important in algebraic or differential geometry, the understanding of groups definable in a given first-order structure (or in certain classes of first-order structures) is important for model theory, as well as its applications.  On the one hand, even if one is only interested in abstract classification of first-order structures (i.e. in understanding combinatorial complexity of definable sets), unavoidably one is forced to study definable groups. (This realization probably started with Zilber's work on totally categorical structures \cite{zilberuncountably}, and later it was made clear by Hrushovski's theorem on unidimensional theories \cite{unidimensional}.) On the other hand, some of the most striking applications of model theory are based on a detailed understanding of definable groups in certain structures. The class of \emph{stable} groups is at the core of model theory, and the corresponding theory was developed in the 1970s-1980s borrowing many ideas from the study of algebraic groups over algebraically closed fields (with corresponding notions of connected components, stabilizers, generics, etc., see e.g. \cite{PoizatStableGroups}). In particular, this general theory was applied to  groups definable in differentially closed and separably closed fields, and was used by Hrushovski to prove  the Mordell-Lang conjecture for function fields \cite{MordellLang} . The theory of stable groups was generalized in the 90s to groups definable in a larger class of \emph{simple} theories, centered around the model-theoretic notion of forking (see \cite{WagnerBook}) and led to a number of results including Hrushovski's proof of the Manin-Mumford conjecture \cite{hrushovski2001manin} and other applications to algebraic dynamics (e.g. \cite{medvedev-scanlon}). More recently, inspired by the ideas of stable and simple group theory, Hrushovski has obtained a general stabilizer-type theorem and found striking applications to approximate subgroups \cite{hr_appx}, which led to a complete classification by Breuillard, Green and Tao \cite{breuillard2012}.
 On the other hand, groups definable in \emph{o-minimal structures} were studied extensively,  generalizing the theory of real Lie groups. This study culminated in a recent resolution of \emph{Pillay's conjecture} for compact o-minimal groups \cite{NIP1}, and the proof has brought to light the importance of the general theory of groups definable in NIP 
  structures (a common generalization of stable and o-minimal structures, see below) and the study of invariant measures on definable subsets of the group. In parallel, methods and objects of topological dynamics were introduced into the picture by Newelski \cite{New4} and gave rise to some new invariants coming from topological dynamics and conjectures concerning their relationship to the more familiar model-theoretic invariants. This circle of ideas has rapidly become a very active research area. The present paper contributes to this direction and, continuing the work in \cite{NIP1}, develops the theory of groups definable in NIP structures which admit a translation invariant probability measure on the boolean algebra of definable subsets. 

The NIP condition (the negation of the Independence Property) is a combinatorial tameness assumption on a first-order structure $M$ which says, in modern terms, that if $D\subseteq M^{m+n}$ is a definable set, then the family $\{ D(a)$, $a\in M^m \}$ of its fibers has finite VC-dimension (see Section \ref{sec:vc}). Roughly speaking, it says that the collection of definable subsets of $M$ is very structured. One can think of NIP as capturing the notion of a \emph{geometric} structure --- as opposed to say arithmetic or random-like --- and of NIP groups as groups arising in geometric settings. This condition was introduced by Shelah \cite{Sh10},  and the connection to VC-dimension was discovered later in \cite{LasVC}. In the past ten or fifteen years, the role of NIP theories has grown to become a central notion in model theory thanks to applications to o-minimal structures, valued fields and combinatorics (see e.g. \cite{SimBook} or \cite{starchenko2016nip} for a survey). Typical examples of NIP structures are given by stable structures (such as algebraically closed fields), o-minimal structures (such as real closed fields) and many Henselian valued fields. On the other hand, an ultraproduct of finite fields is an example of a structure which is not NIP, essentially because of arithmetic phenomena that enter the picture.

Let now $G$ be a group definable in an NIP structure $M$ (i.e. both the underlying set and multiplication are definable by formulas with parameters in $M$). Such a group comes equipped with a collection of \emph{definable} subsets of cartesian powers of $G$ which is closed under boolean combinations, projection and cartesian products. For example, if $M = (\mathbb{R}, +, \times, 0,1)$ is the field of reals, then $G$ is a real semi-algebraic group and definable sets are all semi-algebraic subsets. 
As it is typical in model theory, we prefer to work in a saturated model of our group (which in the case of an algebraic group corresponds to working in the universal domain in the sense of Weil). More precisely, let $\M \succeq M$ be a sufficiently saturated and homogeneous elementary extension of $M$, a ``monster model'' for the first-order theory of $M$.
We write $G(\M)$ to denote the group obtained by evaluating in $\M$ the formulas used to define $G$ in $M$ (and $G(M)$ will refer to the set of the $M$-points of $G$). So e.g. if we start with $M$ the field of reals, and $G(M)$ its additive group, then $G(U)$ is the additive group of a large real closed field extending $\mathbb{R}$ which now contains infinitesimals, infinitesimals relatively to those infinitesimals, etc. --- i.e., it satisfies a \emph{saturation} condition: every small enough finitely consistent family of definable sets has non-empty intersection.

It was shown by Shelah \cite{shelah2008minimal} that any NIP group $G(\M)$ admits a unique maximal compact quotient denoted $G/G^{00}$, which plays an important role in this theory and for which we will give a dynamical interpretation below.

Our goal in this paper is to adapt techniques from stable group theory to the NIP context in order to have tools at our disposal potentially as useful as those for stable groups. However, one main difference with the stable case is that we cannot deal with all groups any more. As we show, to have any hope of having well-behaved notions of generic types and large subsets, the group must be \emph{definably amenable} (this strengthens results of Hrushovski, Peterzil and Pillay \cite{NIP2} who first observed this).

We say that a definable group $G$ is definably amenable if there is a finitely-additive probability measure on the boolean algebra of definable subsets of $G$ which is moreover invariant under the group action (this property holds for $G(\M)$ if and only if it holds for $G(M)$, see the remark after Definition \ref{def: def amenable group}).
This notion has been introduced and studied in \cite{NIP1} and \cite{NIP2}. The emphasis in those papers is on the special case of the so-called \emph{fsg} groups, which will not be relevant to us here.
Of course, if $G(M)$ is amenable as a discrete group, then it is definably amenable since we have such a measure on all subsets, not just the definable ones, but the converse need not hold (e.g. deep work of Sela \cite{Sela-stable} demonstrates that any non-commutative free group, viewed as a first-order structure in the group language, is stable, hence definably amenable; but of course it is not amenable). Here are some important examples of definably amenable NIP groups:

$\bullet$ stable groups;

$\bullet$ definable compact groups in o-minimal theories or in p-adics ({\it e.g.} $SO_3(\mathbb R)$);

$\bullet$ solvable NIP groups, or more generally, any NIP group $G$ such that $G(M)$ is amenable as a discrete group.

Examples of definable NIP groups which are not definably amenable are $SL_2(\mathbb{R})$ or $SL_2(\mathbb{Q}_p)$ (see \cite{NIP1}).

It is classical in topological dynamics to consider the action of a discrete group $G$ on the compact space of ultrafilters on $G$, or in other words ultrafilters on the boolean algebra of \emph{all} subsets of $G$. In the definable setting, given a definable group $G(M)$, we let $S_G(M)$ denote the space of ultrafilters on the boolean algebra of \emph{definable} subsets of $G(M)$, hence the space $S_G(M)$ (called the  \emph{space of types} of $G(M)$) is a ``tame'' analogue of the Stone-\v{C}ech compactification of the discrete group $G$. Then $G(M)$ acts on $S_G(M)$ by homeomorphisms. The same construction applies to $G(\M)$ giving the space $S_G(\M)$ of ultrafilters on the definable subsets of $G(\M)$. Our main object of study in this paper are the dynamical systems $(S_G(M),G(M))$ and $(S_G(\M), G(\M))$ and related objects. In this context, we classify regular ergodic measures and show in particular that minimal flows are uniquely ergodic. We also give various characterizations of definable subsets of $G$ which have positive measure for some (resp. for all) invariant measures, connecting topological dynamics of the system with Shelah's model-theoretic notion of forking.

A starting point of this theory is a theorem of Shelah stating that any NIP group $G(\M)$ admits a maximal compact quotient $G/G^{00}$ (the kernel $G^{00}$ is characterized as the smallest subgroup of $G(\M)$ which is an intersection of definable subsets and has small index in $G(\M)$). We give a dynamical interpretation of this compact quotient by establishing an isomorphism between the ideal subgroup of the Ellis semigroup of a certain extension of $(S_G(M),G(M))$ and $G/G^{00}$. Those results settle several questions in the area.


\smallskip

~

Now we state the main results more precisely. In the case of stable groups, a natural notion of a generic set (or type) was given by Poizat (generalizing the notion of a generic point in an algebraic group), and a very satisfactory theory of such generics was developed in \cite{poizat1987groupes}. In a non-stable group, however, generic types need not exist, and several substitutes were suggested in the literature, either motivated by the theory of forking as in simple groups (\cite{NIP1, NIP2}), or by topological dynamics (\cite{NewPetr}). First we show that in a definably amenable NIP group all these notions coincide, and that in fact nice behaviour of these notions characterizes definable amenability.

\begin{thm}\label{thm: Main theorem 1} Let $G = G(\M)$ be a definable NIP group, with $\M$ a sufficiently saturated model. Then the following are equivalent:
\begin{enumerate}
\item $G$ is definably amenable (i.e. admits a $G$-invariant measure on its definable subsets).
\item The action of $G$ on $S_G(\M)$ admits a small orbit.
\end{enumerate}
\end{thm}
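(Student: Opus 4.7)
My plan is to handle the two directions separately, both through the notion of a strongly $f$-generic type (one whose $G$-translates are all invariant over a fixed small model).

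\textbf{$(2) \Rightarrow (1)$.} Let $p \in S_G(\mathcal{U})$ have bounded $G$-orbit. Enlarging a small $M_0 \prec \mathcal{U}$ if needed, we may arrange that every $gp$ is $M_0$-invariant. The crucial step is to prove $G^{00} \subseteq \Stab(p)$: since the orbit is bounded, the stabilizer has bounded index in $G$, and using NIP one produces a type-definable subgroup of bounded index sitting inside $\Stab(p)$, which by minimality must contain $G^{00}$. Then the orbit map $\bar{g} \mapsto gp$ descends to the compact Hausdorff group $G/G^{00}$, and
\[
\mu(\phi) \;:=\; \int_{G/G^{00}} \mathbf{1}_{\{gp \in [\phi]\}} \, dm(\bar{g}),
\]
with $m$ the normalized Haar measure, is a well-defined $G$-invariant Keisler measure (additivity and positivity are immediate; $G$-invariance follows from translation invariance of $m$).

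\textbf{$(1) \Rightarrow (2)$.} Let $\mu$ be $G$-invariant and (by saturation) $M_0$-invariant for a small $M_0$; by NIP, $\mu$ is Borel-definable over $M_0$. The plan is to build a strongly $f$-generic $p$: a global type such that every translate $gp$ is $M_0$-invariant. This is carried out by a compactness argument organizing positive-$\mu$-measure formulas into a coherent complete type, using the $G$-invariance of $\mu$ (which makes positive measure a $G$-invariant feature), Borel-definability, and the finite VC-dimension of each $G$-orbit of a formula afforded by NIP. Once $p$ is strongly $f$-generic, $Gp$ sits inside the bounded set $S^{\mathrm{inv}}_G(\mathcal{U}/M_0)$ of $M_0$-invariant global types, so (2) follows.

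\textbf{Main obstacle.} The harder direction is $(1) \Rightarrow (2)$: a naive compactness argument fails because positive-measure formulas are not closed under finite conjunction, so one must exploit the full $G$-invariance of $\mu$ along with NIP-specific tools (Borel-definability of invariant measures, VC-approximation) to select positive-measure formulas whose $G$-translates remain jointly non-forking over $M_0$. The direction $(2) \Rightarrow (1)$ is cleaner, but rests on the NIP-specific fact that the stabilizer of a bounded-orbit type contains $G^{00}$, which requires the theory of type-definable bounded-index subgroups.
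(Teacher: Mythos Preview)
Your proposal has the difficulty of the two directions reversed, and the argument you give for the genuinely hard direction $(2)\Rightarrow(1)$ contains a false step.

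For $(2)\Rightarrow(1)$ you write: ``Enlarging a small $M_0 \prec \mathcal{U}$ if needed, we may arrange that every $gp$ is $M_0$-invariant.'' This is not a matter of enlarging $M_0$: a global type with bounded $G$-orbit need not be $\Aut(\mathcal{U}/M_0)$-invariant for \emph{any} small $M_0$. The paper itself gives an explicit counterexample (the example just before Lemma~\ref{lem: extending f-generic types}): in $G=(\mathcal{R}^2,+)$ over a saturated real closed field, there is a type $r$ which is $G$-invariant (so its orbit has size~$1$) yet is not invariant over any small model. Thus your first sentence already assumes more than what is being proved, and the subsequent claim that $G^{00}\subseteq\Stab(p)$ via ``a type-definable subgroup of bounded index inside $\Stab(p)$'' is likewise unjustified: $\Stab(p)$ is not known to be type-definable, and the paper only establishes $\Stab(p)=G^{00}$ for bounded-orbit types \emph{after} definable amenability has been assumed (Proposition~\ref{prop:f-gen iff G00-inv}).

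The paper's actual route for $(2)\Rightarrow(1)$ (Theorem~\ref{thm: bdd orbit implies def amenable}) is entirely different and does not pass through strong $f$-genericity of the given $p$. One takes a strict Morley sequence $(N_i)_{i<\kappa}$ of $|M|^+$-saturated extensions of a small $M$ and shows that for some $i$, no $G(N_i)$-translate of $p|_{N_i}$ forks over $M$. If this failed for every $i$, one would get, after pigeonhole and using boundedness of the orbit, an indiscernible sequence $(c_i)$ of parameters with $g\mu(\phi(x,c_i))>\varepsilon$ for a fixed translate $g\mu$; Fact~\ref{fact_indmeasures} then forces $\{\phi(x,c_i)\}$ to be consistent, contradicting that each $\phi(x,c_i)$ divides over $M$ along a strict Morley sequence (Fact~\ref{fac:ForkingNTP2}(3)). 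This is the missing idea.

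For $(1)\Rightarrow(2)$, your outline is essentially the known argument from \cite{NIP2} (quoted here as Fact~\ref{fac: BasicDefAm}(3)): one passes to a $G$-invariant \emph{and} $M_0$-invariant measure (not ``by saturation'' --- an arbitrary $G$-invariant measure need not be invariant over any small model; one has to produce such a measure separately), and then any type in its support is strongly $f$-generic over $M_0$, hence has bounded orbit. So this direction is the easy one, contrary to your ``Main obstacle'' paragraph.
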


The proof is contained in Theorem \ref{thm: bdd orbit implies def amenable}. It  confirms a conjecture of Petrykowski in the case of NIP groups \cite[Conjecture 0.1]{New3} and solves Conjecture 4.13 of \cite{AnnalisaAnand}.

\begin{thm} \label{thm: Main theorem 2}Let $G = G(\M)$ be a definably amenable NIP group. Then the following are equivalent for a definable set $\phi(x)$:
\begin{enumerate}
\item $\phi(x)$ does not $G$-divide (i.e. there is no infinite sequence $(g_i)_{i < \omega}$ of elements of $G$ and natural number $k$ such that any $k$ sets in $\{ g_i \phi(x) \}_{ i< \omega}$ have empty intersection, see Definition \ref{def: f-gen});

\item $\phi(x)$ is weakly generic (i.e. there is some non-generic $\psi(x)$ such that $\phi(x) \lor \psi(x)$ is generic, see Definition \ref{def: weak generics, etc});
\item $\mu(\phi(x)) > 0$ for some $G$-invariant measure $\mu$;
\item $\phi(x)$ is $f$-generic (meaning that for any small model $M$ over which $\phi(x)$ is defined, no $G$-translate of $\phi(x)$ forks over $M$, see Definition \ref{def: f-gen});

\end{enumerate}
Moreover, for a global type $p \in S_G(\M)$ the following are equivalent:

\begin{enumerate}
\item $p$ is $f$-generic (i.e. every formula in $p$ is $f$-generic);
\item $p$ has a small $G$-orbit;
\item $\Stab(p) = G^{00}$.
\end{enumerate}

\end{thm}

This is given by Theorem \ref{thm:AllGenericNotionsCoincide} and Proposition \ref{prop:f-gen iff G00-inv}, and combined with Theorem \ref{thm: Main theorem 1} solves in particular \cite[Problem 4.13]{AnnalisaAnand}.

~

We continue by studying the space of $G$-invariant measures using VC-theory, culminating with a characterization of regular ergodic measures (Section \ref{sec: ergodicity}) and unique ergodicity (Section \ref{sec: unique ergodicity}). Generalizing slightly a construction from \cite{NIP2}, we associate to every generic type $p\in S_G(\M)$ a measure $\mu_p$, which is a lifting of the Haar measure on the compact group $G/G^{00}$ via $p$ (see Definition \ref{def: mu_p}). It follows from Theorem \ref{thm: Main theorem 2} that the supports of the measures $\mu_p$ are exactly the minimal subflows of $(S_G(\M),G(\M))$ (see Proposition \ref{prop: ap implies f-generic}).

\begin{thm}
Let $G = G(\M)$ be a definably amenable, NIP group. Then regular ergodic measures on $S_G(\M)$ are precisely the measures of the form $\mu_p$, for $p$ an $f$-generic type in $S_G(\M)$. If two such measures have the same support, then they are equal (i.e., minimal subflows of $(S_G(\M),G(\M))$ are uniquely ergodic).
\end{thm}

The first statement is Theorem \ref{thm: egodic iff mu_p} and the second follows from Proposition \ref{OrbitPreservesMuP}. The following is Theorem \ref{thm: unique ergodic}.

\begin{thm}
Let $G = G(\M)$ be a definably amenable, NIP group. Then $G$ has a unique invariant measure if and only if it admits a unique minimal subflow if and only if it admits a global generic type.
Moreover, in such a group all the notions in Theorem \ref{thm: Main theorem 2} coincide with ``$\phi(x)$ is generic'', and in the moreover part we can add ``$p$ is almost periodic''.
\end{thm}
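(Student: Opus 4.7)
The plan is to treat the two directions of the main equivalence separately, and then deduce the two ``moreover'' assertions as corollaries of the techniques used.

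$(\Leftarrow)$ Assume a global generic type $p_0$ exists. The core observation is that weakly generic collapses to generic in the presence of $p_0$: if $\phi$ is weakly generic, witnessed by $\psi$ non-generic with $\phi\vee\psi$ generic, then finitely many translates cover $G$ and so $\bigvee_i (g_i\phi \vee g_i\psi)$ is a tautology and therefore lies in every global type, in particular $p_0$. Thus some $g_i\phi \in p_0$ or $g_i\psi \in p_0$; a translate of a non-generic is non-generic and $p_0$ contains only generic formulas, so $g_i\phi \in p_0$, making $\phi$ generic. Combined with Theorem~\ref{thm: Main theorem 2} this establishes the first ``moreover'' statement. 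For uniqueness of the invariant measure, let $\mu$ be any invariant measure; its support consists of types all of whose formulas have positive $\mu$-measure, hence are $f$-generic by Theorem~\ref{thm: Main theorem 2}(4), hence generic by the collapse. A variant of the same Boolean argument shows that the set $S_G^{\mathrm{gen}}(\M)$ of generic types equals $\overline{Gp_0}$ and is a minimal sub-flow: for any generic $q$ and any $\phi \in q$, genericity of $\phi$ gives $\bigvee_i g_i\phi = \top \in q$, so some $g_j q$ lies in $[\phi]$, yielding $q'\in\overline{Gq}$ for every $q' \in S_G^{\mathrm{gen}}(\M)$. Since the support of every invariant measure is contained in this minimal flow, and minimal flows are uniquely ergodic by the previously stated theorem, the invariant measure is unique.

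$(\Rightarrow)$ Now assume $\mu$ is the unique invariant measure. Being the sole element of the convex compact set of invariant measures, $\mu$ is ergodic, so by the ergodic measures classification from the preceding section $\mu = \mu_p$ for some global $f$-generic type $p$. I claim $p$ is generic. Suppose towards a contradiction that some $\phi \in p$ is non-generic. Then the partial type $\{\neg(g\cdot\phi) : g \in G\}$ is finitely consistent (no finite union of translates of $\phi$ covers $G$), hence realized by some $p'$ with $\overline{Gp'} \subseteq [\neg\phi]$. Picking any minimal sub-flow $M \subseteq \overline{Gp'}$, the unique invariant measure on $M$ (again by the minimal-flows-uniquely-ergodic result), extended by zero to $S_G(\M)$, yields an invariant measure $\mu_1$ with $\mu_1(\phi)=0$, contradicting uniqueness since $\mu(\phi) = \mu_p(\phi) > 0$. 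So $p$ is generic.

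For the second ``moreover,'' which adds ``$p$ is almost periodic'' to the list: if $p$ is $f$-generic then under our hypotheses $p$ is generic by the collapse, so $p \in S_G^{\mathrm{gen}}(\M) = \overline{Gp_0}$, which is a minimal flow, making $p$ almost periodic. Conversely, if $p$ is almost periodic then $\overline{Gp}$ is a minimal flow carrying a unique invariant measure $\nu$ of full support; for any $\phi \in p$ the set $[\phi]\cap\overline{Gp}$ is a non-empty clopen, so $\nu([\phi]) > 0$, and extending $\nu$ by zero to $S_G(\M)$ produces an invariant Keisler measure giving $\phi$ positive mass, so $\phi$ is $f$-generic by Theorem~\ref{thm: Main theorem 2}(4) and hence $p$ is $f$-generic.

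The principal obstacle is the ``Key Lemma'' underpinning $(\Rightarrow)$: producing an invariant measure that annihilates a given non-generic formula. Its success relies essentially on the earlier minimal-flow unique-ergodicity result, which furnishes an invariant Borel measure on any minimal sub-flow of $S_G(\M)$; mere definable amenability of $G$ would only provide measures on the whole space and would not automatically restrict to sub-flows of measure zero. A secondary subtlety is establishing that $\overline{Gp_0}$ is itself a minimal flow in the $(\Leftarrow)$ direction, which I handle by the direct Boolean ``tautology covering'' argument above rather than invoking continuity of the orbit map $G/G^{00}\to S_G(\M)$.
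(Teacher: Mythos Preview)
Your argument is largely correct, but there is a real gap in the $(\Rightarrow)$ direction. You assert ``$\mu(\phi) = \mu_p(\phi) > 0$'' for $\phi \in p$, but membership of $\phi$ in the $f$-generic type $p$ does \emph{not} by itself give $\mu_p(\phi) > 0$: that would require $p \in S(\mu_p)$, which is exactly Question~\ref{question} in the paper (later answered negatively). The patch is short but must invoke uniqueness a second time: $\phi \in p$ makes $\phi$ $f$-generic, so by Theorem~\ref{thm: Main theorem 2}(4) some invariant measure gives $\phi$ positive mass, and by uniqueness that measure is $\mu$. With this inserted your argument goes through.

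The paper's $(\Rightarrow)$ sidesteps this entirely by not passing through the ergodic classification. It shows directly that $\mu$ is a generic measure: if $\mu(\phi) > 0$ with $\phi$ non-generic, then every finite conjunction $\bigwedge_i \neg g_i\phi$ is the complement of a non-generic set, hence weakly generic, hence $f$-generic by Theorem~\ref{thm:AllGenericNotionsCoincide}; so the partial type $\{\neg g\phi : g \in G\}$ extends to a complete $f$-generic $q$, and $\mu_q(\phi) = 0$ straight from the definition of $\mu_q$. This is shorter and needs neither the ergodic classification nor the minimal-flow machinery you invoke.

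For $(\Leftarrow)$ the two routes are close cousins. The paper notes that a generic $p_0$ lies in $\overline{Gq}$ for every $q$, hence $\mu_q = \mu_{p_0}$ for every $f$-generic $q$ by Proposition~\ref{OrbitPreservesMuP}, and concludes via the classification of ergodic measures. You instead collapse weakly generic to generic (reproving Fact~\ref{BasicWeakGenAP}(4)) and then appeal to unique ergodicity of the minimal flow of generic types; this last fact is not stated separately in the paper but does follow from Proposition~\ref{prop: ap implies f-generic}(2) together with Corollary~\ref{cor: mu_p approx via support}. Your sentence explaining why the generic types form a single minimal flow is slightly garbled (you write ``$\phi \in q$'' where ``$\phi \in q'$'' is needed), but the underlying argument is the paper's own.
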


Next we study enveloping semigroups. This notion from topological dynamics (see \cite{Glasner1}) was introduced in model theory by Newelski \cite{New4}. He observed that it behaved better when one replaced the dynamical system $(S_G(M),G(M))$ with an extension of it: The set $G(M)$ embeds into $S_G(\M)$ as realized types and we let $S_G(M^{\ext})$ be its closure. Then $G(M)$ acts on $S_G(M^{\ext})$ and this flow admits $S_G(M)$ as a factor. We consider the enveloping semigroup $E$ of the dynamical system $(S_G(M^{\ext}),G(M))$.  In view of the results in  \cite{ChePilSim}, $E$ can be identified with $(S_G(M^{\ext}), \cdot)$, where $\cdot$ is a naturally defined operation extending multiplication on $G(M)$ (see Section \ref{sec: Ellis group conj} for details).

Fix a minimal flow $\mathcal{M}$ in $(S_G(M^{\ext}),G(M))$ (i.e. a closed $G(M)$-invariant set), and an idempotent $u \in \mathcal{M}$. Then general theory of Ellis semigroups implies that  $u \mathcal{M}$ is a subgroup of $E$, which we call the Ellis group.
The canonical surjective homomorphism $G \to G/G^{00}$ factors naturally through the space $S_G(M^{\ext})$, so we have a well-defined continuous surjection $\pi: S_G(M^{\ext}) \to G/G^{00}, \tp(g/M) \mapsto gG^{00}$, and the restriction of $\pi$ to the group $u \mathcal{M}$ is a surjective homomorphism. Newelski asked if under certain model-theoretic assumptions this map could be shown to be an isomorphism. Pillay later formulated a precise conjecture which we are able to prove here.

\begin{thm}[Ellis group conjecture]
Let $G$ be definably amenable and NIP. Then $\pi: u \mathcal{M} \to G/G^{00}$ is an isomorphism.
\end{thm}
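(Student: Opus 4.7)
The map $\pi|_{u\mathcal{M}} \colon u\mathcal{M} \to G/G^{00}$ is already known to be a continuous surjective group homomorphism, so the task is to prove injectivity. Since it is a group homomorphism, all of its fibres have the same cardinality $|\ker \pi|$, so it suffices to exhibit a single coset $c \in G/G^{00}$ for which $\pi^{-1}(c) \cap u\mathcal{M}$ is a singleton; equivalently, I must show that any $p \in u\mathcal{M}$ with $\pi(p) = eG^{00}$ coincides with the idempotent $u$.

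The plan is to prove the sharper \emph{generic compact domination} statement for the minimal flow $\mathcal{M}$ itself: for Haar-almost-every coset $c \in G/G^{00}$, the fibre $\pi^{-1}(c) \cap \mathcal{M}$ is a singleton. Granted this, I pick such a $c$ (which automatically lies in the image of $\pi|_{u\mathcal{M}}$ by surjectivity); then $\pi^{-1}(c) \cap u\mathcal{M} \subseteq \pi^{-1}(c) \cap \mathcal{M}$ is a non-empty singleton, forcing $|\ker \pi| = 1$.

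To prove generic compact domination I would first observe that every $p \in \mathcal{M}$ is $f$-generic, so it arises as the restriction to $M^{\ext}$ of a global $f$-generic type $\tilde p \in S_G(\monster)$ with $\Stab(\tilde p) = G^{00}$ (via the correspondence between $M$-invariant global types and types over $M^{\ext}$, together with Theorem \ref{thm: Main theorem 2}). Next, by unique ergodicity of $\mathcal{M}$ (proved in the earlier section on unique ergodicity), the unique $G$-invariant measure $\mu_\mathcal{M}$ on $\mathcal{M}$ coincides with $\mu_{\tilde p}$, i.e.\ with the lifting of the Haar measure on $G/G^{00}$ along $\tilde p$; in particular $\pi_\ast \mu_\mathcal{M}$ is the normalized Haar measure $h$ on $G/G^{00}$. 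A disintegration of $\mu_\mathcal{M}$ along $\pi$ then produces, for $h$-a.e.\ $c$, a conditional probability measure on $\pi^{-1}(c) \cap \mathcal{M}$; since $\mu_{\tilde p}$ is built from a single type $\tilde p$ pushed around by the $G/G^{00}$-action, these conditional measures are forced to be Dirac masses, yielding the generic compact domination statement.

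The main obstacle will be this last step: converting the $G^{00}$-invariance of $\tilde p$ over $\monster$ into the required rigidity for conditional measures on $\mathcal{M} \subseteq S_G(M^{\ext})$, and correctly carrying out the disintegration in a setting where $\mathcal{M}$ and the fibres are compact Hausdorff but not standard Borel. This will require moving carefully between the Ellis-semigroup picture in $S_G(M^{\ext})$ and the invariant-types picture over $\monster$, checking that fibres computed at the two levels agree modulo $h$-negligible sets; once it is in hand, injectivity of $\pi|_{u\mathcal{M}}$ follows formally from the group-homomorphism property.
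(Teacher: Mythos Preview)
Your reduction to finding a single singleton fibre is correct and pleasant: since $\pi|_{u\mathcal{M}}$ is a group homomorphism, one singleton fibre forces $\ker\pi$ to be trivial. The paper does not take this shortcut; it instead uses the Ellis-semigroup fact that left multiplication by any $r\in\mathcal{M}$ is injective on $u\mathcal{M}$, and produces such an $r$ with $r\cdot p_1=r\cdot p_2$.

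However, your proposed route to a singleton fibre has a real gap. From $\mu_{\tilde p}=(\iota)_*h_0$ (where $\iota(\bar g)=g\cdot \tilde p$ and $\tilde p$ concentrates on $G^{00}$) you correctly get that the disintegration of $\mu_{\tilde p}$ over $G/G^{00}$ has Dirac conditionals $\delta_{\iota(c)}$. But this says nothing about the \emph{set} $\pi^{-1}(c)\cap\mathcal{M}$: the minimal flow is the \emph{closure} of the orbit, and any limit point $q\in\overline{G\cdot \tilde p}\setminus G\cdot \tilde p$ with $\pi(q)=c$ lies in that fibre while contributing zero $\mu_{\tilde p}$-mass. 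So ``Dirac conditionals'' is strictly weaker than ``a.e.\ singleton fibre''. The statement you actually need---that $E_\phi=\{c:\pi^{-1}(c)\cap C\text{ meets both }\phi\text{ and }\neg\phi\}$ is Haar-null for every definable $\phi$---is precisely measure-theoretic compact domination, which the paper does \emph{not} prove; indeed the paper explicitly replaces the Lebesgue ideal by the Baire ideal because the former was not available.

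What the paper actually establishes is \emph{Baire-generic} compact domination (Theorem~\ref{thm:CompactDomination}): each $E_\phi$ is closed with empty interior, hence meagre. This is proved by showing the set $U_\phi=\{\bar g:g\cdot p\vdash\phi\}$ is constructible in $G/G^{00}$ (using Borel definability of a strongly $f$-generic $p$ over $M^{\ext}$), so $\partial U_\phi$ has empty interior, and then checking $E_\phi\subseteq\partial U_\phi$. With this in hand, the paper picks an ultrafilter $\mathcal{F}'$ on $G/G^{00}$ refining the comeagre filter, chooses $r_{\bar g}\in\mathcal{M}$ over each coset, sets $r=\lim_{\mathcal{F}'}r_{\bar g}\in\mathcal{M}$, and shows $r\cdot p_1=r\cdot p_2$ for any $p_1,p_2\in u\mathcal{M}$ with $\pi(p_1)=\pi(p_2)$, using that for each $\phi$ the bad set $E_\phi$ is $\mathcal{F}'$-small. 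Injectivity of left multiplication on $u\mathcal{M}$ then gives $p_1=p_2$. So the paper sidesteps both measure-theoretic domination and your singleton-fibre strategy by working with category instead of measure and with proximality in the Ellis semigroup instead of fibre cardinalities.

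A secondary point: your sketch conflates the minimal flow $\mathcal{M}\subseteq S_G(M^{\ext})$ with objects living in $S_G(\M')$; the paper handles this by first lifting $\mathcal{M}$ to a suitable $C=\overline{G\cdot p}\subseteq S_G(\M')$ with $p$ strongly $f$-generic over $M^{\ext}$, and carrying out the compact-domination argument there. Also, the unique ergodicity of minimal flows you invoke is a consequence of Proposition~\ref{prop: ap implies f-generic} and Corollary~\ref{cor: mu_p approx via support}, not of Section~\ref{sec: unique ergodicity}, which is about unique ergodicity of $G$ itself.
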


In particular, this demonstrates that the Ellis group is indeed a model theoretic object, i.e. it only depends on the first-order theory of the group and does not depend on the choice of a small model $M$ over which it is computed. Some special cases of the conjecture were previously known (see \cite{ChePilSim}). For the proof, we establish a form of generic compact domination for minimal flows in definably amenable groups with respect to the Baire ideal --- Theorem \ref{thm:CompactDomination}.

\begin{rem}

We remark that the study of NIP definably amenable groups can be thought of as a model-theoretic version of tame dynamics as studied by Glasner, Megrelishvili and others, see \cite{glasner2007structure} (in fact,  we discovered the connection only after having essentially completed this work). The NIP assumption implies that the dynamical system $(S_G(M),G(M))$ is tame---and even null---in the sense of \cite{glasner2007structure}, \cite{kerr2007independence}, but it is not equivalent to it. Nullness of this system is equivalent to the fact that the definable family of translates of any given definable set has finite VC-dimension (see \cite[Proposition 5.4(2)]{kerr2007independence}), whereas the NIP condition implies that any uniformly defined family of sets has finite VC-dimension.
\end{rem}

\subsection*{Acknowledgements}

We are grateful to the referees for pointing out a mistake in the previous version of the article (see Question \ref{prob: f-gen ideal vs amenability}) and for other numerous suggestions on improving the presentation.

%
%
%

\section{Preliminaries}

In this section we summarize some of the context for our results including the theory of forking and groups in NIP, along with some general results about families of sets of finite VC-dimension.

\subsection{Combinatorics of VC-families}\label{sec:vc}

Let $X$ be a set, finite or infinite, and let $\mathcal{F}$ be a family of subsets of $X$. 
Given $A \subseteq X$, we say that it is shattered by $\mathcal{F}$ if for every $A' \subseteq A$
there is some $S \in \mathcal{F}$ such that $A \cap S = A'$. A family $\mathcal{F}$ is said to have \emph{finite VC-dimension} if there is some $n < \omega$ such that
no subset of $X$ of size $n$ is shattered by $\mathcal{F}$. If this is the case, we let $VC(\mathcal{F})$ be the
largest integer $n$ such that some subset of $X$ of size $n$ is shattered by it.

If $S\subseteq X$ is a subset and $x_1,\ldots,x_n \in X$, we let $\Av(x_1,\ldots,x_n;S) = \frac 1 n |\{i\leq n: x_i\in S\}|$. Similarly, if $(t_i)_{i<n}$ is a set of truth values, we let $\Av(t_i) = \frac 1 n |\{i<n : t_i=$ True$\}$.

Later in the paper, we will often write $a \approx^{\epsilon} b$ for $|a-b|\leq \epsilon$.

A fundamental fact about families of finite VC-dimension is the following uniform version of the weak law of large numbers (\cite{vapnik1971uniform}, see also \cite[Section 4]{NIP2} for a discussion).
\begin{fact} \label{VC-theorem}
For any $k > 0$ and $\varepsilon > 0$ there is $N < \omega$ satisfying the following.

Let $(X, \mu)$ be a probability space, and  let $\mathcal{F}$ be a family of subsets of $X$ of VC-dimension $\leq k$ such that:
\begin{enumerate}
\item every set from $\mathcal{F}$ is measurable;
\item for each $n$, the function $f_n: X^n \to [0,1]$ given by $$(x_1, \ldots, x_n) \mapsto \sup_{S \in \mathcal{F}} | \Av(x_1, \ldots, x_n; S) - \mu(S) |$$  is measurable;
\item for each $n$, the function $g_n: X^{2n} \to [0,1]$
$$(x_1, \ldots, x_n, y_1, \ldots, y_n) \mapsto \sup_{S \in \mathcal{F}} | \Av(x_1, \ldots, x_n; S) - \Av(y_1, \ldots, y_n; S) |$$
is measurable.
\end{enumerate}
\underline{Then} there is some tuple $(x_1, \ldots, x_N) \in X^N$ such that for any $S \in \mathcal{F}$ we have $|\mu(S) - \Av(x_1, \ldots, x_N; S)| \leq \varepsilon$.
\end{fact}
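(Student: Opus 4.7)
The plan is to follow the classical Vapnik--Chervonenkis symmetrization-and-permutation argument, which reduces the continuous supremum to a finite union bound via the Sauer--Shelah lemma. I would work not with an explicit $N$ but instead establish, for independent samples $\bar x = (x_1,\dots,x_n)$ and $\bar y = (y_1,\dots,y_n)$ drawn from $\mu^{\otimes n}$, the estimate
\begin{equation*}
\mu^{\otimes n}\!\Bigl[\sup_{S\in\mathcal F}|\Av(\bar x;S)-\mu(S)|>\varepsilon\Bigr] \;\leq\; 4(2n+1)^{k}\,e^{-n\varepsilon^{2}/8},
\end{equation*}
and then pick $N$ large enough that the right-hand side is strictly less than $1$; the desired tuple then exists because the complementary event has positive measure.

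First I would prove the symmetrization inequality: if $\sup_{S}|\Av(\bar x;S)-\mu(S)|>\varepsilon$ is witnessed by some $S^{*}$ (which may depend on $\bar x$), then on the event $|\Av(\bar y;S^{*})-\mu(S^{*})|\leq\varepsilon/2$ we have $|\Av(\bar x;S^{*})-\Av(\bar y;S^{*})|>\varepsilon/2$ by the triangle inequality. Chebyshev's inequality applied to the $\{0,1\}$-valued i.i.d.\ indicators $\mathbf 1_{S^{*}}(y_i)$ shows that, for $n\geq 1/\varepsilon^{2}$, the conditional probability of the good event is $\geq 1/2$; this is exactly where hypothesis (2) is needed, to make the event in question jointly measurable and allow Fubini. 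The result is
\begin{equation*}
\mu^{\otimes n}\!\Bigl[\sup_{S}|\Av(\bar x;S)-\mu(S)|>\varepsilon\Bigr]\;\leq\;2\,\mu^{\otimes 2n}\!\Bigl[\sup_{S}|\Av(\bar x;S)-\Av(\bar y;S)|>\varepsilon/2\Bigr].
\end{equation*}

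Next, I would exploit the symmetry of the product measure under swapping the pairs $(x_{i},y_{i})$. Letting $\sigma\in\{-1,+1\}^{n}$ be uniformly random (applied by transposing $x_i$ and $y_i$ when $\sigma_i=-1$), the joint distribution of $(\bar x,\bar y)$ is preserved, so I can introduce $\sigma$ without changing the outer probability. Now condition on the unordered $2n$-tuple $\{x_1,\dots,x_n,y_1,\dots,y_n\}$: by Sauer--Shelah, $\mathcal{F}$ traces at most $(2n+1)^{k}$ distinct subsets on these $2n$ points, so the supremum is really a maximum over a finite family. For each fixed trace $S$, $\Av(\bar x^{\sigma};S)-\Av(\bar y^{\sigma};S)=\frac{1}{n}\sum_{i=1}^{n}\sigma_{i}(\mathbf 1_{S}(x_i)-\mathbf 1_{S}(y_i))$ is an average of independent bounded mean-zero random variables, and Hoeffding's inequality gives a bound of $2e^{-n\varepsilon^{2}/8}$ on the probability that it exceeds $\varepsilon/2$. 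Hypothesis (3) is what permits this conditioning argument to be carried out measurably. A union bound over the at most $(2n+1)^{k}$ traces gives the displayed estimate.

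The main obstacle is not any single step but the careful handling of measurability: the supremum $\sup_{S\in\mathcal F}$ is over a possibly uncountable family, so without assumptions (2) and (3) the relevant events are not a priori in the product $\sigma$-algebra and the probabilistic manipulations above make no sense. Once measurability is granted by hypothesis, the three ingredients --- symmetrization, Sauer--Shelah, Hoeffding --- combine routinely, and then choosing $N$ so that $4(2N+1)^{k}e^{-N\varepsilon^{2}/8}<1$ (which is possible since the exponential dominates the polynomial) completes the argument.
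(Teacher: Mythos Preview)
Your argument is the standard Vapnik--Chervonenkis symmetrization proof and is correct; there is nothing to compare, since the paper does not prove this statement but simply cites it as a known fact (from \cite{vapnik1971uniform}, with a pointer to \cite[Section 4]{NIP2}). One minor quibble: in the Chebyshev step you need $n\geq 2/\varepsilon^{2}$ (not $1/\varepsilon^{2}$) to force the conditional probability below $1/2$, but this is immaterial since you are only choosing $N$ large at the end.
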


The assumptions (2) and (3) are necessary in general (but follow from (1) if the family $\mathcal{F}$ is countable).

Another fundamental fact about VC-families that we will need is the following theorem about transversal sets due to Matousek. It uses the following definition: a family $\mathcal G$ of subsets of some set $X$ has the $(p,k)$- property if among any $p$ sets in $\mathcal G$, some $k$ have non-empty intersection.

\begin{fact}[\cite{Matousek}] \label{fac: p,q-theorem}
Let $\mathcal{F}$ be a family of subsets of some set $X$. Assume that $\mathcal{F}$ has finite VC-dimension. Then there is some $k < \omega$ such that for every $p \geq k$, there is an
integer $N$ such that: for every finite subfamily $\mathcal{G} \subseteq \mathcal{F}$, if $\mathcal{G}$ has the $(p, k)$-property, then there is an $N$-point
set intersecting all members of $\mathcal{G}$.
\end{fact}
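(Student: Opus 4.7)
The plan is to follow the strategy of Alon--Kleitman for convex sets, adapted by Matousek to abstract families of finite VC-dimension. The three pillars are a \emph{fractional Helly theorem}, the Haussler--Welzl \emph{epsilon-net theorem} for VC-families, and a \emph{linear-programming duality} argument producing a bounded transversal from a fractional one.

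First I would establish a fractional Helly statement: there exist an integer $k$ and a constant $\beta > 0$, both depending only on $VC(\mathcal{F})$, such that for every finite $\mathcal{G} \subseteq \mathcal{F}$, if at least an $\alpha$-fraction of the $\binom{|\mathcal{G}|}{k}$ many $k$-subsets of $\mathcal{G}$ share a common point, then some single point lies in at least $\beta |\mathcal{G}|$ members of $\mathcal{G}$. This is the technical core: the proof uses the dual Sauer--Shelah bound together with a counting/probabilistic argument showing that, were no point contained in many sets, one could extract a shattered configuration of size exceeding $VC(\mathcal{F})$, contradicting finite VC-dimension.

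Next I would show that the $(p,k)$-property forces the fractional Helly hypothesis: choosing $p$ a suitable multiple of $k$ and double-counting over $p$-subsets of $\mathcal{G}$ and their $k$-subsets, one sees that a positive fraction of all $k$-subsets of $\mathcal{G}$ must have a common point, which then triggers the fractional Helly conclusion. Iterating this weighted first-selection across suitably reweighted copies of $\mathcal{G}$, or equivalently solving the LP-dual of fractional piercing, produces a probability measure on a ground set of candidate piercing points under which every member of $\mathcal{G}$ has measure at least some $\beta' > 0$.

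Finally I would invoke the Haussler--Welzl theorem: since $\mathcal{F}$ has finite VC-dimension, any probability measure on the ambient space admits a $\beta'$-net of size $N = N(VC(\mathcal{F}), \beta')$, and such a net hits every member of $\mathcal{G}$, yielding the sought $N$-point transversal. The main obstacle is the fractional Helly step: turning mere finite VC-dimension into a uniform positive density $\beta$ of heavily covered points is the delicate part and controls both the value of $k$ and the final $N$; once it is available, the LP duality and epsilon-net bookkeeping is essentially mechanical.
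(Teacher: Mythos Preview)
The paper does not give its own proof of this statement: it is recorded as a \textsc{Fact} with a citation to Matousek and is used as a black box (in the proof of Proposition~\ref{prop: non-G-div extends to AP}). There is therefore nothing in the paper to compare your argument against.

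That said, your outline is the standard Alon--Kleitman/Matousek route and is correct in broad strokes: fractional Helly for bounded VC-dimension, a double-count to pass from the $(p,k)$-property to the fractional Helly hypothesis, LP duality to obtain a fractional transversal of bounded weight, and then the $\varepsilon$-net theorem to round it to an $N$-point transversal. One small inaccuracy: in your statement of fractional Helly you fix $\beta$ depending only on $VC(\mathcal{F})$ while leaving $\alpha$ free; in fact $\beta$ must depend on $\alpha$ as well (for every $\alpha>0$ there is $\beta=\beta(\alpha,VC(\mathcal{F}))>0$). This does not affect the rest of your argument, since the double-counting step produces a definite $\alpha$ from $p$ and $k$, and the LP/$\varepsilon$-net steps then proceed with the corresponding $\beta$.
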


\subsection{Forking in NIP theories} \label{sec: forking}

We will use standard notation. We work with a complete theory $T$ in a language $L$. We fix a monster model $\M \models T$
which is $\kappa$-saturated and $\kappa$-strongly homogeneous for $\kappa$ a sufficiently large strong limit cardinal.

Recall that a formula $\phi(x,y)$ is NIP if the family of subsets $\{\phi(x,a):a\in \monster\}$ has finite VC-dimension. The theory $T$ is NIP if all formulas are NIP. In this paper, we always assume that $T$ is NIP unless explicitly stated otherwise.

\smallskip

We summarize some facts about forking in NIP theories. Recall that a set $A$ is an \emph{extension base} if every type $p\in S(A)$ has a global extension non-forking over $A$. In particular, any model of an arbitrary theory is an extension base, and every set is an extension base in o-minimal theories, algebraically closed valued fields or p-adics.

\begin{defi}[\cite{CheKap}]
\begin{enumerate}
\item A global type $q \in S(\M)$ is \emph{strictly non-forking} over a small model $M$ if $q$ does not fork over $M$, and for every $B \supseteq M$ and $a \models q|_{B}$, $\tp(B/aM)$ does not fork over $M$.

\item Given $q \in S(M)$, we say that $(b_i : i < \kappa )$ is a \emph{strict Morley sequence} in $q$ if there is some global extension $q' \in S(\M)$ of $q$ strictly non-forking over $M$ satisfying $b_i \models q'|_{Mb_{<i}}$ for all $i < \kappa$.
\end{enumerate}

\end{defi}

\begin{fact}[\cite{CheKap}] \label{fac:ForkingNTP2} Assume that $T$ is NIP and let $A$ be an extension base.

\begin{enumerate}
\item A formula $\phi(x,a) \in L(\M)$ forks over $A$ if and only if it divides over $A$, i.e., the set of formulas dividing over $A$ forms an ideal.
\item Every $q(y) \in S(M)$ admits a global extension strictly non-forking over $M$.

\item Assume that $\phi(x,b) \in L(\M)$ forks (equivalently, divides) over $M$, and let $(b_i : i < \kappa)$ in $\M$ be an infinite strict Morley sequence in $\tp(b/M)$. Then $\{ \phi(x,b_i) : i<\kappa \}$ is inconsistent.

\end{enumerate}
\end{fact}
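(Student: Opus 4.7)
The plan is to establish the three items in order, with the crucial intermediate step being a version of Kim's lemma along \emph{strict} Morley sequences; then (1) follows from applying this lemma to a common strict Morley sequence witnessing the dividing of each disjunct in a forking instance, and (3) is essentially this lemma packaged directly.

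For (2), I would first obtain a global extension $q \in S(\M)$ of $p \in S(M)$ that is non-forking over $M$ using that models are extension bases. To upgrade to strict non-forking, I would build $q$ by a transfinite compactness argument, enumerating small parameter sets $B_\alpha \supseteq M$ and, at each step, using the fact that $M$ is an extension base applied on the \emph{other} side: given a partial realization $a \models p$ and $B \supseteq M$, the type $\tp(B/M)$ has a non-forking extension to $Ma$, so one can find $a' \equiv_M a$ with $\tp(B/Ma')$ non-forking over $M$. Coherence of these choices along the enumeration, together with the symmetry of non-forking over models in the relevant direction, yields a global $q \supseteq p$ satisfying both clauses of strict non-forking.

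For (3), assume $\phi(x,b)$ divides over $M$ and let $(b_i)_{i < \kappa}$ be a strict Morley sequence in $\tp(b/M)$ generated by a global strictly non-forking $q' \in S(\M)$. Suppose for contradiction that $\{\phi(x,b_i) : i<\kappa\}$ is consistent, realized by some $c$. Strictness of $q'$ gives that $\tp(b_i / M c b_{<i})$ does not fork over $M$ for every $i$, so by Ramsey plus NIP (using that indiscernible sequences over models in NIP are finitely satisfiable on cofinal subsequences) one can extract a sequence $(b'_i)$ which is $Mc$-indiscernible and has the same EM-type over $M$ as $(b_i)$, in particular is a Morley sequence in $\tp(b/M)$. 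Since $\models \phi(c,b'_0)$ and the sequence is $c$-indiscernible, $\{\phi(x,b'_i)\}$ is consistent, contradicting the dividing of $\phi(x,b)$ over $M$.

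Finally for (1), suppose $\phi(x,a) \vdash \bigvee_{i<n} \psi_i(x,c_i)$ with each $\psi_i(x,c_i)$ dividing over $M$. Take a global type strictly non-forking over $M$ extending $\tp(a c_0 \ldots c_{n-1} / M)$ and a strict Morley sequence $(a_j, c_0^j,\ldots, c_{n-1}^j)_{j<\omega}$ generated by it. By (3), for each fixed $i<n$ the set $\{\psi_i(x,c_i^j) : j<\omega\}$ is $k_i$-inconsistent for some $k_i$. Pigeonholing along a long enough strict Morley sequence, together with the NIP alternation bound applied to each $\psi_i(x,y)$, forces $\{\phi(x,a_j)\}$ itself to be inconsistent, hence $\phi(x,a)$ divides over $M$. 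The main obstacle is the extraction step in (3): one must use strictness precisely to guarantee that the indiscernible extracted over $Mc$ remains a Morley sequence in the \emph{original} type, and this is the place where the one-sided non-forking coming from mere extension-base-ness is not enough --- symmetry encoded by strictness is essential.
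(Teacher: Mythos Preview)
The paper does not prove this statement; it is recorded as a \textbf{Fact} cited from \cite{CheKap}, so there is no in-paper argument to compare against. Your derivation of (1) from (3) is fine (pigeonhole on $i<n$ suffices; the NIP alternation bound is unnecessary), and (2) is too sketchy at the coherence step, but neither is the main issue.

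Your argument for (3) has two genuine gaps. First, ``strictness of $q'$ gives that $\tp(b_i / M c\, b_{<i})$ does not fork over $M$'' is wrong: strict non-forking says that for $a \models q'|_B$ the type $\tp(B/Ma)$ is non-forking over $M$, which with $B=Mb_{<i}$, $a=b_i$ yields only $\tp(b_{<i}/M b_i)$ non-forking over $M$ --- the reverse direction, and with no $c$ involved. Nothing forces $b_i \models q'|_{Mcb_{<i}}$, so nothing controls $\tp(b_i/Mcb_{<i})$. Second, even granting your extraction, exhibiting \emph{one} $M$-indiscernible sequence $(b'_i)$ in $\tp(b/M)$ along which $\{\phi(x,b'_i)\}$ is consistent does not contradict dividing, which merely asserts that \emph{some} such sequence makes the formulas inconsistent. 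The actual proof in \cite{CheKap} uses strictness in the direction it actually points --- $\tp(b_{<i}/Mb_i)$ non-dividing over $M$ --- to inductively build an array whose $i$-th row is a dividing witness for $\phi(x,b_i)$ indiscernible over the earlier rows; a hypothetical $c\models\{\phi(x,b_i)\}$ then produces an inp-pattern of unbounded depth, contradicting NTP$_2$ (a consequence of, but strictly weaker than, NIP).
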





From now on, we will freely use the equivalence of forking and dividing over models in NIP theories.

\begin{fact}(See e.g. \cite[Proposition]{NIP2}.)
Assume that $T$ is NIP and $M \models T$. A global type $p(x)$ does not fork (equivalently, does not divide) over $M$ if and only if it is \emph{$M$-invariant}. This is, for every $\phi(x,a)$ and $a'\equiv_M a$, we have $p\vdash \phi(x,a)\Leftrightarrow p\vdash \phi(x,a')$. 
\end{fact}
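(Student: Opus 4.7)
The plan is to prove both directions separately; the non-trivial one, non-forking $\Rightarrow$ $M$-invariance, is where the NIP hypothesis enters essentially.

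\emph{($\Leftarrow$):} Suppose $p$ is $M$-invariant and let $\phi(x,a) \in p$. For any $M$-indiscernible sequence $(a_n)_{n<\omega}$ with $a_0 = a$, each $a_n \equiv_M a$, so by invariance $\phi(x, a_n) \in p$ for every $n$. Hence $\{\phi(x, a_n) : n<\omega\}$ is consistent (realized by any realization of $p$), showing that $\phi(x, a)$ does not divide over $M$. Since this holds for every formula in $p$ and $M$ is an extension base, Fact~\ref{fac:ForkingNTP2}(1) gives that $p$ does not fork over $M$.

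\emph{($\Rightarrow$):} Suppose $p$ does not fork over $M$ and assume, for contradiction, that $p$ is not $M$-invariant: there exist a formula $\phi(x,y)$ and tuples $a \equiv_M a'$ with $\phi(x, a) \in p$ and $\neg\phi(x, a') \in p$. Let $\psi(x, y, y') := \phi(x, y) \wedge \neg\phi(x, y')$; then $\psi(x, a, a') \in p$, so $\psi(x, a, a')$ does not divide over $M$. Using Fact~\ref{fac:ForkingNTP2}(2) I extend $\tp(aa'/M)$ to a global type strictly non-forking over $M$ and build from it a strict Morley sequence $\bigl((a_n, a'_n)\bigr)_{n<\omega}$ over $M$ starting at $(a_0, a'_0) = (a, a')$. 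The contrapositive of Fact~\ref{fac:ForkingNTP2}(3) then makes $\{\psi(x, a_n, a'_n) : n<\omega\}$ consistent; pick $c$ realizing it, so $\phi(c, a_n)$ holds and $\phi(c, a'_n)$ fails for every $n$.

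It remains to derive a contradiction from NIP. The element $c$ uniformly separates the two coordinates of the $M$-indiscernible sequence $\bigl((a_n, a'_n)\bigr)_n$ via $\phi$, and I would argue — exploiting the two-sided non-forking symmetry afforded by the \emph{strict} part of the Morley sequence — that from the interleaved sequence $(a_0, a'_0, a_1, a'_1, \ldots)$ one can extract an $M$-indiscernible sequence on which $\phi(c, \cdot)$ still alternates infinitely often. This contradicts the standard NIP characterization of $\phi(x,y)$: along any $M$-indiscernible sequence and for any fixed parameter, the truth value of $\phi$ is eventually constant.

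The main obstacle is precisely this extraction step: the naive interleaved sequence is not $M$-indiscernible a priori, so turning the uniform alternation witnessed by $c$ into a genuine indiscernible witness requires exploiting the strict (as opposed to plain Morley) property of the sequence, which is exactly what supplies the symmetry needed to produce the contradiction.
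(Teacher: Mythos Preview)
The paper does not prove this statement---it is cited as a fact from \cite{NIP2}---so I compare your argument to the standard one.

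Your $(\Leftarrow)$ direction is correct.

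Your $(\Rightarrow)$ direction has a genuine gap, and it is exactly the one you flag at the end. Having found $c$ with $\phi(c,a_n)\wedge\neg\phi(c,a'_n)$ for all $n$ does \emph{not} contradict NIP: the sequence of pairs $\bigl((a_n,a'_n)\bigr)_n$ is $M$-indiscernible, but the interleaved sequence $(a_0,a'_0,a_1,a'_1,\ldots)$ need not be, and there is no mechanism---strict Morley or otherwise---that lets you extract an $M$-indiscernible sequence from it while preserving the alternation witnessed by the fixed element $c$. Ramsey-type extraction over $Mc$ would homogenize the truth values; extraction over $M$ alone loses $c$. The strictness of the Morley sequence gives you symmetry of non-forking between $c$ and the $a_n$'s, but that is not what is needed here.

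The standard argument avoids this detour entirely. Since $M$ is a model, $a\equiv_M a'$ implies $a$ and $a'$ have the same Lascar strong type over $M$; hence there is a finite chain $a=c_0,c_1,\ldots,c_k=a'$ with each consecutive pair lying on a common $M$-indiscernible sequence. For some $i$ the truth value of $\phi(x,c_i)$ in $p$ flips, so without loss $a,a'$ themselves begin an $M$-indiscernible sequence $(b_j)_{j<\omega}$. Now NIP for $\phi$ says that for any $c$ the truth value of $\phi(c,b_j)$ alternates only finitely often, so $\{\phi(x,b_{2j})\wedge\neg\phi(x,b_{2j+1}):j<\omega\}$ is $k$-inconsistent for some $k$; since $\bigl((b_{2j},b_{2j+1})\bigr)_j$ is $M$-indiscernible, $\phi(x,a)\wedge\neg\phi(x,a')$ divides over $M$, contradicting non-forking of $p$. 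The key point you are missing is precisely this use of Lascar strong types to place $a$ and $a'$ on a \emph{single} indiscernible sequence from the outset.
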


\begin{rem}\label{rem: non-dividing extends to invariant}
In particular, in view of Fact \ref{fac:ForkingNTP2}, if $\pi(x)$ is a partial type that does not divide over $M$ (e.g. if $\pi(x)$ is $M$-invariant), then it extends to a global $M$-invariant type.
\end{rem}

Let now $p(x)$, $q(y)$ be global types invariant over $M$. For any set $D \supseteq M$, let $b \models q|_D, a \models p|_{Db}$. Then by invariance of $p$ and $q$, the type $\tp(ab/D)$ does not depend on the choice of $a,b$. Call this type $(p \otimes q)_D$, and let $p \otimes q = \bigcup \{ (p \otimes q)_D:  M \subseteq D \subseteq \M \mbox{ small}\}$. Then $(p \otimes q)(x,y)$ is a well-defined, global invariant type over $M$.

Let $p(x)$ be a global type invariant over $M$. Then one defines $$p^{(n)}(x_0, \ldots, x_{n-1}) = p(x_{n-1}) \otimes \cdots \otimes p(x_0),$$ $$ p^{(\omega)}(x_0, x_1, \ldots) = \bigcup_{n < \omega} p^{(n)}(x_0, \ldots, x_{n-1}).$$ For any small set $D \supseteq M$ and $(a_i)_{i < \omega} \models p^{(\omega)}|_D$, the sequence $(a_i)_{i < \omega}$ is indiscernible over $D$.

We now discuss Borel-definability. Let $p(x)$ be a global $M$-invariant type, pick a formula $\phi(x,y) \in L$, and consider the set $S_{p, \phi} = \{ a \in \M : \phi(x,a) \in p \}$. By invariance, this set is a union of types over $M$. In fact, it can be written as a finite boolean combination of $M$-type-definable sets (\cite{NIP2}). Specifically, let
 $\Alt_n (x_0, \ldots, x_{n-1}) = \bigwedge_{i < n - 1} \neg \left( \phi(x_i, y) \leftrightarrow \phi (x_{i+1}, y) \right)$ and let  $A_{n}(y)$ and $B_n(y)$ be the type-definable subsets of $\M$ defined by
$$\exists x_{0}\ldots x_{n-1}\left(p^{\left(n\right)}|_{M}\left(x_{0},\ldots,x_{n-1}\right) \land
\Alt_{n}(x_0, \ldots, x_{n-1})
\land \phi\left(x_{n-1}, y\right)\right)$$
and 
$$
\exists x_{0}\ldots x_{n-1}\left(p^{\left(n\right)}|_{M}\left(x_{0},\ldots,x_{n-1}\right)\land
\Alt_n (x_0, \ldots, x_{n-1})
\land\neg\phi\left(x_{n-1}, y\right)\right)$$

respectively.

Then for some $N<\omega$, $S_{p,\phi}=\bigcup_{n<N}\left(A_{n}\land\neg B_{n+1}\right)$.

Note that the set of all global $M$-invariant types is a closed subset of $S(\M)$. We now consider the local situation. Let $\phi(x,y) \in L$ be a fixed formula and let $S_{\phi}(\M)$ be the space of all global $\phi$-types (i.e., maximal consistent collections of formulas of the form $\phi(x,b), \neg \phi(x,b), b \in \M$). Let $\Inv_{\phi}(M)$ be the set of all global $M$-invariant $\phi$-types---a closed subset of $S_\phi(\M)$, which we equip with the induced topology.

\begin{fact}[\cite{Simon2014rosenthal}] \label{fac: seq compactness}
Let $M$ be a countable model and let $\phi(x,y)$ be NIP. For any set $Z \subseteq \Inv_\phi(M)$ and $p \in \Inv_\phi(M)$, if $p \in \overline{Z}$ (i.e., in the topological closure of $Z$), then $p$ is the limit of a countable sequence of elements of $Z$.
\end{fact}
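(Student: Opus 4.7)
The plan is to invoke the Bourgain--Fremlin--Talagrand (BFT) theorem: a pointwise-compact set of Baire class $1$ functions on a Polish space is Fr\'echet--Urysohn, so every point in the closure of a subset is already the limit of a sequence from it. I will realize $\Inv_\phi(M)$ (with its natural topology) as exactly such a set, with NIP entering through Borel-definability.

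First I would set up the correspondence. Since $M$ is countable (in a countable language), $Y := S_y(M)$ is a compact Polish space. For each $q \in \Inv_\phi(M)$, $M$-invariance makes the function $f_q : Y \to \{0,1\}$ defined by $f_q(\tp(a/M)) = 1 \iff \phi(x,a) \in q$ well-defined, and $q \mapsto f_q$ is a topological embedding of $\Inv_\phi(M) \subseteq S_\phi(\M)$ into $\{0,1\}^Y$ with the product topology: subbasic opens $[\phi(x,b)]$ correspond to subbasic opens $\{f : f(\tp(b/M)) = 1\}$, and $\tp(b/M)$ exhausts $Y$ as $b$ ranges over $\M$. Next, the Borel-definability statement recalled just above expresses $f_q^{-1}(1) = S_{q,\phi}$ as a finite Boolean combination of $M$-type-definable, hence closed, subsets of $Y$, so each $f_q$ is of Baire class $1$. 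This is where NIP of $\phi$ is used.

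Finally, since $\Inv_\phi(M)$ is closed in the compact space $S_\phi(\M)$, its image in $\{0,1\}^Y$ is a pointwise-compact family of Baire class $1$ functions on the Polish space $Y$. BFT then yields that this image, and hence $\Inv_\phi(M)$ itself, is Fr\'echet--Urysohn, which is exactly the desired conclusion. The main obstacle is the middle step: one needs the Borel-definability of invariant $\phi$-types in NIP theories (a genuine NIP input) together with the nontrivial BFT theorem from functional analysis; the remaining topological bookkeeping is routine. An alternative, more self-contained route would bypass the direct Baire class $1$ verification by going through Rosenthal's $\ell^1$-theorem --- any putative $\ell^1$-sequence $(f_{q_n})$ yields, via realizations $c_n \models q_n$ and witnesses $a_J$ of the sign patterns, a configuration $\models \phi(c_n,a_J) \iff n \in J$ that directly contradicts NIP of $\phi$.
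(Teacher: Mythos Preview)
Your proposal is correct and follows precisely the route of the cited reference \cite{Simon2014rosenthal}: identify $\Inv_\phi(M)$ with a pointwise-compact family of functions on the Polish space $S_y(M)$, use NIP (via Borel-definability) to see that these functions are Baire class~$1$, and conclude Fr\'echet--Urysohn from Bourgain--Fremlin--Talagrand. Note that the present paper does not give its own proof of this Fact; it is quoted from \cite{Simon2014rosenthal}, and what you wrote is an accurate sketch of the argument there.

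One small point worth making explicit: the Borel-definability statement recalled in Section~\ref{sec: forking} is formulated for a full global $M$-invariant type $p$, not directly for an $M$-invariant $\phi$-type $q$. To apply it, observe that an $M$-invariant $\phi$-type does not divide over $M$ and hence (Remark~\ref{rem: non-dividing extends to invariant}) extends to a full $M$-invariant type, whose $\phi$-part is $q$; the description of $S_{p,\phi}$ then gives the desired finite Boolean combination of closed sets for $f_q^{-1}(1)$. Alternatively, as in \cite{Simon2014rosenthal}, one can argue directly that NIP of $\phi$ bounds alternations of $f_q$ along convergent sequences in $S_y(M)$, which already forces Baire class~$1$. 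Your suggested alternative via Rosenthal's $\ell^1$-theorem is also the companion argument appearing in that reference.
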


\subsection{Keisler measures}\label{sec: Keisler measures}
Now we introduce some terminology and basic results around the study of measures in model theory. A \emph{Keisler measure} $\mu(x)$ (or $\mu_x$) over a set of parameters $A$ is a finitely additive probability measure
on the boolean algebra $\Def_x(A)$ of $A$-definable subsets of $\M$ in the variable $x$. Alternatively, a Keisler measure $\mu(x)$ may be viewed as assigning a measure to the clopen basis of the space of types $S_x(\M)$. A standard argument shows that it can be extended in a unique way to a countably-additive regular probability measure on all Borel subsets of $S_x(\M)$ (see e.g. \cite[Chapter 7]{SimBook} for details). From now on we will just say ``measure''  unless it could create some confusion.

For a measure $\mu$ over $A$ we denote by $S(\mu)$ its support: the set of types weakly random for $\mu$, i.e., the closed set of all $p\in S(A)$ such that for any $\phi(x)$, $\phi(x) \in p$ implies $\mu ( \phi(x) ) >0$. 

\begin{rem} \label{rem: TopOnMeas}Let $\mes_x(A)$ denote the set of measures over $A$ in variable $x$, it is naturally equipped with a compact topology as a closed subset of $[0,1]^{L_x(A)}$ with the product topology. Every type over $A$ can be identified with the $\{0,1\}$-measure concentrating on it, thus $S_x(A)$ is identified with a closed subset of $\mes_x(A)$.
\end{rem}

The following implication of Fact \ref{VC-theorem} was observed in \cite[Section 4]{NIP2}.

\begin{fact}
\label{fac: measure is average of types}Let $T$ be NIP. Let $\mu(x)$
a measure over $A$, $\Delta=\left\{ \phi_{i}\left(x,y_{i}\right)\right\} _{i<m}$
a finite set of $L$-formulas, and  $\varepsilon>0$ be arbitrary.
Then there are some types $p_{0},\ldots,p_{n-1}\in S_x\left(A\right)$
such that for every $a\in A$ and $\phi\left(x,y\right)\in\Delta$,
we have 
\[
\left|\mu\left(\phi\left(x,a\right)\right)-\Av\left(p_{0},\ldots,p_{n-1};\phi\left(x,a\right)\right)\right|\leq\varepsilon\mbox{.}
\]
Furthermore, we may assume that $p_{i}\in S\left(\mu\right)$, the
support of $\mu$, for all $i<n$.\end{fact}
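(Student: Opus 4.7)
The plan is to apply the uniform weak law of large numbers (Fact~\ref{VC-theorem}) to the probability space $(S_x(A), \tilde{\mu})$, where $\tilde{\mu}$ is the unique regular Borel extension of $\mu$ to $S_x(A)$ as recalled at the start of Section~\ref{sec: Keisler measures}, with respect to the family
$$ \mathcal{F} = \{ [\phi_i(x,a)] : i < m,\ a \in A \} $$
of clopen subsets of $S_x(A)$, where $[\phi(x,a)]$ denotes the clopen set of types containing $\phi(x,a)$. Since each $\phi_i$ is NIP, the family $\{[\phi_i(x,a)] : a \in A\}$ has finite VC-dimension; and a finite union of $m$ classes of finite VC-dimension again has finite VC-dimension (a routine Sauer--Shelah computation), so $\mathcal{F}$ itself has finite VC-dimension $k$.

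Each set in $\mathcal{F}$ is clopen, hence Borel, so hypothesis~(1) of Fact~\ref{VC-theorem} is satisfied. The measurability of the sup-functions $f_n$ and $g_n$ in~(2) and~(3) is the only real subtlety: one invokes the standard fact that a VC-class is \emph{pointwise measurable} with respect to a finite Borel measure (in the sense of Dudley's theory of empirical processes), i.e.\ there is a countable subfamily $\mathcal{F}_0 \subseteq \mathcal{F}$ such that the suprema in $f_n$ and $g_n$ agree $\tilde{\mu}^n$-almost everywhere with the corresponding suprema over $\mathcal{F}_0$, and the latter are trivially measurable.

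Fact~\ref{VC-theorem} now yields $N = N(k, \varepsilon)$ and a tuple $(p_0, \ldots, p_{N-1}) \in S_x(A)^N$ such that
$$ |\tilde{\mu}(S) - \Av(p_0, \ldots, p_{N-1}; S)| \leq \varepsilon $$
for every $S \in \mathcal{F}$. Since $\tilde{\mu}([\phi_i(x,a)]) = \mu(\phi_i(x,a))$ and a type $p$ lies in $[\phi(x,a)]$ iff $\phi(x,a) \in p$, this unwinds to the required inequality. For the \emph{furthermore} clause, observe that $S(\mu)$ is closed in $S_x(A)$ and has full $\tilde{\mu}$-measure: its complement is open and covered by the clopens $[\psi]$ with $\mu(\psi) = 0$, and the regularity of $\tilde{\mu}$ together with the compactness of $S_x(A)$ forces each compact subset of this complement to be covered by finitely many such $[\psi]$, hence to have $\tilde{\mu}$-measure zero. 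Applying the same argument to $(S(\mu), \tilde{\mu}|_{S(\mu)})$ in place of $(S_x(A), \tilde{\mu})$ produces all the $p_i$ in the support, as required.
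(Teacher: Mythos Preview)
The paper does not give its own proof of this fact; it is quoted from \cite[Section 4]{NIP2}, and the argument there is precisely to apply Fact~\ref{VC-theorem} to the type space equipped with the regular Borel extension of $\mu$. So your overall strategy is the intended one, and your treatment of the ``furthermore'' clause is correct.

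There is, however, a genuine gap in your verification of hypotheses~(2) and~(3) of Fact~\ref{VC-theorem}. It is \emph{not} a standard fact that an arbitrary VC-class is pointwise measurable in Dudley's sense: the paper itself warns, immediately after Fact~\ref{VC-theorem}, that these hypotheses ``are necessary in general'', and under the axiom of choice one can build VC-classes of measurable sets for which the sup-functions $f_n$ fail to be measurable. The countable subfamily $\mathcal F_0$ you invoke need not exist for a bare VC-class, so the appeal to empirical-process theory does not go through as stated.

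The repair is short and requires no such machinery. Every $S\in\mathcal F$ is \emph{clopen} in $S_x(A)$, so for each fixed $S$ the map $\bar p\mapsto |\Av(\bar p;S)-\tilde\mu(S)|$ is locally constant, hence continuous, on $(S_x(A))^n$; the same holds for the summands defining $g_n$. Therefore $f_n$ and $g_n$ are pointwise suprema of families of continuous functions, hence lower semicontinuous, hence Borel measurable. Replace the pointwise-measurability paragraph with this observation (and note that the same reasoning applies on the closed subspace $S(\mu)$ for the ``furthermore'' clause) and the argument is complete.
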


\begin{cor}
Let $T$ be an NIP theory in a countable language $L$, and let $\mu$ be a measure. Then the support $S(\mu)$ is separable (with respect to the topology induced from $S(\M)$).
\end{cor}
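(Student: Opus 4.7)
The plan is to build an explicit countable dense subset of $S(\mu)$ using the uniform approximation provided by Fact \ref{fac: measure is average of types}. Throughout I treat $\mu$ as a measure over $\M$ so that $S(\mu) \subseteq S_x(\M)$ with the Stone topology.

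Since $L$ is countable, there are only countably many finite sets $\Delta$ of $L$-formulas $\phi(x,y)$. For each such $\Delta$ and each rational $\varepsilon > 0$, apply Fact \ref{fac: measure is average of types} (with $A = \M$) to obtain a finite set $P_{\Delta,\varepsilon} \subseteq S(\mu)$ such that
$$\bigl| \mu(\phi(x,a)) - \Av(P_{\Delta,\varepsilon};\phi(x,a)) \bigr| \le \varepsilon$$
for every $\phi(x,y) \in \Delta$ and every parameter $a \in \M$. Let $D = \bigcup_{\Delta,\varepsilon} P_{\Delta,\varepsilon}$; this is a countable subset of $S(\mu)$.

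To verify density, let $V$ be a nonempty basic open neighborhood in $S(\mu)$. Since $L$-formulas are closed under conjunction, I may write $V = [\phi(x,a)] \cap S(\mu)$ for some $L$-formula $\phi(x,y)$ and some $a \in \M^{|y|}$. Any $p \in V$ is weakly random for $\mu$ and contains $\phi(x,a)$, so $\delta := \mu(\phi(x,a)) > 0$. Choose a rational $\varepsilon$ with $0 < \varepsilon < \delta/2$ and take $\Delta = \{\phi(x,y)\}$. Then
$$\Av(P_{\Delta,\varepsilon}; \phi(x,a)) \ge \delta - \varepsilon > 0,$$
so at least one $p_i \in P_{\Delta,\varepsilon} \subseteq D$ satisfies $\phi(x,a) \in p_i$, witnessing $p_i \in V \cap D$.

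There is no real obstacle beyond organizing the indexing: the heavy lifting is in Fact \ref{fac: measure is average of types}, whose approximation is uniform in the parameters $a \in \M$ for each fixed finite set $\Delta$ of parameter-free $L$-formulas. Countability of $L$ then ensures the index set $\{(\Delta,\varepsilon)\}$ and hence $D$ is countable, completing the proof.
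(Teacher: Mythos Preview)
Your proof is correct and follows essentially the same approach as the paper: both build a countable dense subset of $S(\mu)$ by taking the union over all finite $\Delta \subseteq L$ and all (rational) $\varepsilon > 0$ of the approximating types furnished by Fact~\ref{fac: measure is average of types}, and then argue that any nonempty basic open set in $S(\mu)$ corresponds to a formula of positive $\mu$-measure, forcing one of the approximating types to lie in it. The only cosmetic difference is your use of rational $\varepsilon$ versus the paper's $1/k$.
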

\begin{proof}
By Fact \ref{fac: measure is average of types}, for any finite $\Delta \subseteq L$ and $k < \omega$, we can find some $p_0^{\Delta}, \ldots, p_{n^\Delta_k - 1}^{\Delta} \in S(\mu)$ such that for any $\phi(x,y) \in \Delta$ and any $a \in \M$ we have $\mu(\phi(x,a)) \approx^{\frac{1}{k}} \Av(p_0^{\Delta}, \ldots, p_{n^{\Delta}_k - 1}^{\Delta}; \phi(x,a))$. Let $S_0 = \bigcup_{k < \omega, \Delta \subseteq L \textrm{ finite}} \{ p^{\Delta}_i : i < n^{\Delta}_k\}$. Then $S_0$ is a countable subset of $S(\mu)$, and we claim that it is dense. Let $U$ be a non-empty open subset of $S(\mu)$. Then there is some formula $\phi(x) \in L(\M)$ such that $\emptyset \neq \phi(x) \cap S(\mu) \subseteq U$. In particular $\mu(\phi(x)) > 0$, hence for some $k$ and $\Delta$ large enough we have by the construction of $S_0$ that necessarily $\phi(x) \in p_i^{\Delta}$ for at least one $i < n^{\Delta}_k$.
\end{proof}

A measure $\mu \in \mes_x(\M)$ is non-forking over a small model $M$ if for every formula $\phi(x) \in L(\M)$ with $\mu(\phi(x))>0$, $\phi(x)$ does not fork over $M$. A theory of forking for measures in NIP generalizing the previous section from types to measures is developed in \cite{NIP2, HruPilSimMeas}. In particular, a global measure non-forking over a small model $M$ is in fact $\Aut(\M/M)$-invariant. Moreover, using Fact \ref{fac: measure is average of types} along with results in Section \ref{sec: forking} one shows that a global measure $\mu$ invariant over $M$ is \emph{Borel definable} over $M$, i.e., for any $\phi(x,y) \in L$ the map $f_\phi: S_y(M) \to [0,1], q \mapsto \mu(\phi(x,b)), b \models q$ is Borel (and it is well defined by $M$-invariance of $\mu$). This allows to define a tensor product of $M$-invariant measures: given $\mu \in \mes_x(\M), \nu \in \mes_y(\M)$ $M$-invariant and $\phi(x,y) \in L(\M)$, let $N\supseteq M$ be some small model over which $\phi$ is defined. We define $\mu \otimes \nu(\phi(x,y))$ by taking $\int_{q \in S_y(N)} f_\phi(q) \textrm{d} \nu'$, where $\nu' = \nu|_N$ viewed as a Borel measure on $S_y(N)$. Then $\mu \otimes \nu$ is a global $M$-invariant measure.

~

We will need the following basic combinatorial fact about measures (see \cite{NIP1} or \cite[Lemma 7.5]{SimBook}).
\begin{fact}\label{fact_indmeasures}
Let $\mu$ be a Keisler measure, $\phi(x,y)$ a formula and $(b_i)_{i<\omega}$ an indiscernible sequence. Assume that for some $\epsilon>0$ we have $\mu(\phi(x,b_i))\geq \epsilon$ for every $i<\omega$. Then the partial type $\{\phi(x,b_i):i<\omega\}$ is consistent.
\end{fact}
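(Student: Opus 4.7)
The plan is to prove this by contradiction, reducing the problem to a pointwise combinatorial bound that can then be integrated against $\mu$. Note that the statement does not actually require $\mu$ to be invariant or the theory to be NIP; only finite additivity of $\mu$ together with indiscernibility of $(b_i)_{i<\omega}$ is needed.

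Suppose toward contradiction that $\{\phi(x,b_i):i<\omega\}$ is inconsistent. By compactness some finite subfamily is inconsistent, so there exist $i_0 < \cdots < i_{k-1} < \omega$ such that $\{\phi(x,b_{i_j}) : j<k\}$ has no realization in $\M$. By indiscernibility of $(b_i)_{i<\omega}$, the family $\{\phi(x,b_{j_0}),\dots,\phi(x,b_{j_{k-1}})\}$ is then inconsistent for \emph{every} choice of indices $j_0 < \cdots < j_{k-1} < \omega$. Equivalently, no element of $\M$ can simultaneously satisfy $k$ of the formulas from the whole sequence.

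Fix $N < \omega$ and consider the simple function $f_N(x) = \sum_{i<N}\mathbf{1}_{\phi(x,b_i)}$ on $\M$ (equivalently, on $S_x(\M)$, where each $\mathbf{1}_{\phi(x,b_i)}$ is the indicator of a clopen set). The previous paragraph gives the pointwise bound $f_N \leq k-1$. Partitioning $\M$ according to which subset $J\subseteq N$ of the formulas is satisfied, each part with $|J|\geq k$ is empty, so every non-empty part contributes at most $k-1$ to $f_N$. Using only finite additivity of $\mu$ (and grouping the atoms of the finite Boolean algebra generated by $\phi(x,b_0),\dots,\phi(x,b_{N-1})$), this yields
\[
\sum_{i<N}\mu(\phi(x,b_i)) \;=\; \int f_N\, d\mu \;\leq\; k-1.
\]
On the other hand, the hypothesis $\mu(\phi(x,b_i))\geq \varepsilon$ for all $i$ gives $\sum_{i<N}\mu(\phi(x,b_i)) \geq N\varepsilon$. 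Hence $N\varepsilon \leq k-1$ for every $N$, which is absurd for $N > (k-1)/\varepsilon$.

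The only mildly delicate point is making the integral argument rigorous directly from finite additivity; but since only finitely many formulas appear in $f_N$, decomposing the universe into the $\leq 2^N$ atoms of the Boolean algebra they generate reduces the computation to a finite sum, so no extension to a countably additive Borel measure on $S_x(\M)$ is actually required. Everything else is a direct combination of compactness, indiscernibility, and the pigeonhole-style bound on $f_N$.
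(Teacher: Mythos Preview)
Your argument is correct and is exactly the standard proof of this fact. The paper does not actually give a proof here but merely cites \cite{NIP1} and \cite[Lemma 7.5]{SimBook}; the argument in those references is the one you wrote: use compactness and indiscernibility to get $k$-inconsistency, bound $\sum_{i<N}\mu(\phi(x,b_i))\leq k-1$ pointwise, and contradict $\sum_{i<N}\mu(\phi(x,b_i))\geq N\varepsilon$ for $N$ large.
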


\subsection{Model-theoretic connected components} \label{sec: connected components}

Now let $G = G(\M)$ be a definable group. Let $A$ be a small subset of $\M$. We say that $H \leq G$ has \emph{bounded} index if $|G:H|$ is smaller than the saturation of $\M$, and define:
\begin{itemize}
\item $G_{A}^{0}=\bigcap\left\{ H\leq G:H\mbox{ is }A\mbox{-definable, of finite index}\right\} $.
\item $G_{A}^{00}=\bigcap\left\{ H\leq G:H\mbox{ is type-definable over \ensuremath{A}, of bounded index}\right\} $.
\item $G_{A}^{\infty}=\bigcap\left\{ H\leq G:H\mbox{ is \ensuremath{\Aut\left(\M/A\right)}-invariant, of bounded index}\right\} $.
\end{itemize}
Of course $G_{A}^{0}\supseteq G_{A}^{00}\supseteq G_{A}^{\infty}$ for any $A$ and these are all normal $A$-invariant subgroups of $G$.

\begin{fact}[see e.g. {\cite[Chapter 8]{SimBook}} and references therein]
Let $T$ be $\NIP$. Then for every small set $A$ we have $G_{A}^{0}=G_{\emptyset}^{0}$, $G_{A}^{00}=G_{\emptyset}^{00}$, $G_{A}^{\infty}=G_{\emptyset}^{\infty}$. Moreover, $|G/G^{\infty}| \leq 2^{|T|}$.
\end{fact}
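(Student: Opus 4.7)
The plan is to prove the three equalities in parallel. In each case the inclusion $G^{\bullet}_A \supseteq G^{\bullet}_\emptyset$ is immediate, since the family of subgroups defining $G^\bullet_A$ contains the one defining $G^\bullet_\emptyset$. The substance is therefore to show that every $A$-definable finite-index (resp.\ $A$-type-definable bounded-index, resp.\ $\Aut(\M/A)$-invariant bounded-index) subgroup $H \leq G$ contains an $\Aut(\M)$-invariant subgroup of the same kind. Applied to $H$, this yields $G^{\bullet}_\emptyset \subseteq H$, and, since $H$ was arbitrary, $G^{\bullet}_\emptyset \subseteq G^{\bullet}_A$.

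The uniform candidate is $K := \bigcap_{\sigma \in \Aut(\M)} \sigma(H)$. It is visibly $\Aut(\M)$-invariant and contained in $H$, and by the standard correspondence between $\Aut(\M)$-invariance and $\emptyset$-(type-)definability for (type-)definable sets, $K$ is $\emptyset$-definable in the $G^0$ case and $\emptyset$-type-definable in the $G^{00}$ case. To handle $G^0$, write $H = \phi(\M, a)$ of index $n$ and let $\Theta(y)$ be the $\emptyset$-definable formula expressing ``$\phi(x, y)$ defines a subgroup of $G$ of index $\leq n$''; then $K = \bigcap_{b \models \Theta} \phi(\M, b)$ is an intersection of uniformly definable subgroups. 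The NIP Baldwin--Saxl lemma reduces this to a finite sub-intersection, so $K$ is $\emptyset$-definable of finite index, hence contains $G^0_\emptyset$, as required.

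The main obstacle is bounding the index of $K$ in the $G^{00}$ and $G^\infty$ cases, together with the uniform bound $|G/G^\infty| \leq 2^{|T|}$; a priori the index of $K$ could be as large as $|G/H|^{|\Aut(\M)|}$, which is unbounded. This is exactly where NIP becomes decisive, via Shelah's theorem on bounded invariant equivalence relations and its group-theoretic counterpart due to Gismatullin: in an NIP theory the Lascar equivalence $E_L$ is the finest bounded $\Aut(\M)$-invariant equivalence relation on any sort, and it has at most $2^{|T|}$ classes. This translates directly into the existence of a smallest $\Aut(\M)$-invariant (respectively, smallest $\emptyset$-type-definable) bounded-index subgroup of $G$, of index at most $2^{|T|}$. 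This minimal subgroup is contained in $K$ for every admissible choice of $H$, which simultaneously yields $G^\infty_A = G^\infty_\emptyset$, $G^{00}_A = G^{00}_\emptyset$, and the uniform bound $|G/G^\infty| \leq 2^{|T|}$.
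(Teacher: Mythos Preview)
The paper does not prove this Fact; it is stated with a citation to \cite[Chapter 8]{SimBook} and no argument is given. So there is no paper proof to compare against, and your proposal amounts to an independent sketch of the standard literature proof.

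Two remarks on the proposal itself. First, your opening inclusion is reversed: the family of subgroups over $A$ contains the family over $\emptyset$, so the intersection is \emph{smaller}, i.e.\ $G^\bullet_A \subseteq G^\bullet_\emptyset$ is the immediate direction. Your second paragraph then correctly argues the substantive direction $G^\bullet_\emptyset \subseteq G^\bullet_A$; as written, both paragraphs claim the same inclusion.

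Second, the $G^0$ case via Baldwin--Saxl is fine, but your treatment of $G^{00}$ and $G^\infty$ is essentially circular. The statements ``$E_L$ is the finest bounded invariant equivalence relation'' and ``$E_L$ has $\leq 2^{|T|}$ classes'' hold in \emph{any} theory and do not by themselves yield a smallest bounded-index invariant subgroup of $G$: the coset relation of an invariant bounded-index $H$ is coarser than $E_L$, which gives $H \supseteq \langle a^{-1}b : a\,E_L\,b\rangle$, but one still has to show this generated subgroup has bounded index. That step genuinely requires NIP and is precisely the content of Shelah's theorem (for $G^{00}$) and Gismatullin's (for $G^\infty$); it does not ``translate directly'' from the $E_L$ bound. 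So your last paragraph is really just re-citing the result rather than proving it---which, to be fair, is exactly what the paper does.
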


We will be omitting $\emptyset$ in the subscript and write for instance $G^{00}$ for $G^{00}_\emptyset$.
\begin{rem}\label{rem: G000 generators}
It follows that $G^{\infty}$ is equal to the subgroup of $G$ generated by the set $\{ g^{-1} h : g \equiv_M h \}$, for any small model $M$.
\end{rem}

Let $\pi : G \to G/G^{00}$ be the canonical projection map.

The quotient $G/G^{00}$ can be equipped with
a natural ``logic'' topology: a set $S\subseteq G/G^{00}$ is closed iff $\pi^{-1}\left(S\right)$ is type-definable over some (equivalently, any) small model $M$.

\begin{fact} [see \cite{PillayLogicTop}]
The group $G/G^{00}$ equipped with the logic topology is a compact topological group.
\end{fact}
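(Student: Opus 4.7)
The plan is to realize $G/G^{00}$ as a topological quotient of the type space $S_G(M)$ for some small model $M$ containing the parameters defining $G$, and then transfer compactness, Hausdorffness, and continuity of the group operations through this quotient.

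First, I would verify that $\pi: G \to G/G^{00}$ factors through the natural surjection $G \to S_G(M)$: if $a \equiv_M b$, then $a^{-1}b \in G^{\infty} \subseteq G^{00}$ by Remark \ref{rem: G000 generators}, so $\pi(a) = \pi(b)$. Let $\bar{\pi}: S_G(M) \to G/G^{00}$ denote the induced surjection. The definition of the logic topology makes $S \subseteq G/G^{00}$ closed exactly when $\bar{\pi}^{-1}(S)$ is closed in $S_G(M)$, so the logic topology is the quotient topology from $\bar{\pi}$, and compactness of $G/G^{00}$ is immediate from compactness of $S_G(M)$.

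For Hausdorffness, I would show that the equivalence relation $E$ on $S_G(M)$ induced by $\bar{\pi}$ is closed in $S_G(M) \times S_G(M)$; a compact space modulo a closed equivalence relation has a Hausdorff quotient. The set $Z = \{(x,y) \in G \times G : x^{-1}y \in G^{00}\}$ is type-definable over $M$, hence defines a closed subset of $S_{G \times G}(M)$; its image under the continuous projection $S_{G \times G}(M) \to S_G(M) \times S_G(M)$ is closed by compactness, and one checks using the factoring through types together with the normality of $G^{00}$ that this image is exactly $E$.

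Finally, for continuity of inversion and multiplication I would exploit definability of the group operations. For inversion, the preimage of any $M$-type-definable set under the definable map $g \mapsto g^{-1}$ is $M$-type-definable, which passes to the quotient. For multiplication, one first identifies the product topology on $G/G^{00} \times G/G^{00}$ with the quotient topology coming from $S_G(M) \times S_G(M)$ (valid because a product of quotient maps between compact Hausdorff spaces is a quotient map), and then observes that the preimage of $\pi^{-1}(C)$ under the definable multiplication $G \times G \to G$ is type-definable over $M$; projecting this closed set through $S_{G \times G}(M) \to S_G(M) \times S_G(M)$ as in the Hausdorff step gives the required continuity. The main obstacle I expect is this last bookkeeping around the product topology and the double use of the ``closed image'' argument; everything else is a routine transfer of structure through $\bar{\pi}$ once the factoring through types is in place.
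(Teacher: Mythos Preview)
Your argument is correct and is essentially the standard proof (as in the cited reference). Note, however, that the paper itself does not prove this statement: it is recorded as a \textsc{Fact} with a citation to \cite{PillayLogicTop}, so there is no in-paper proof to compare against. One minor remark: your use of $G^{\infty}\subseteq G^{00}$ to factor $\pi$ through $S_G(M)$ is fine, but it is perhaps simpler to observe directly that the relation $x^{-1}y\in G^{00}$ is a bounded type-definable equivalence relation over $M$, hence each class is $M$-type-definable; either way the factoring goes through.
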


\begin{rem} \label{rem: G/G^{00} is Polish}
If $L$ is countable then $G/G^{00}$ is a Polish space with respect to the logic topology. Indeed, there is a countable model $M$ such that every closed set is a projection of a partial type over $M$, and $\{ \pi(\phi(\M))^c : \phi(x) \in L(M)\}$ is a countable basis of the topology.
\end{rem}

In particular, $G/G^{00}$ admits an invariant normalized Haar probability measure $h_0$. Furthermore $h_0$ is the unique left-$G/G^{00}$-invariant Borel probability measure on $G/G^{00}$ (see e.g. \cite[Section 60]{halmos1950measure}), as well as simultaneously the unique right-$G/G^{00}$-invariant Borel probability measure on $G/G^{00}$.

The usual completion procedure for a measure preserves $G$-invariance, so we may take $h_0$ to be complete.

\section {Generic sets and measures}

\subsection{$G$-dividing, bounded orbits and definable amenability}
\textbf{Context:} We work in an NIP theory $T$, and let $G = G(\M)$ be an $\emptyset$-definable group.

We will consider $G$ as acting on itself on the left. For any model $M$, this action extends to an action of $G(M)$ on the space $S_G(M)$ of types concentrating on $G$. Hence if $p\in S_G(M)$ and $g\in G(M)$ we have $g\cdot p = \tp(g\cdot a/M)$ where $a\models p$. The group $G(M)$ also acts on $M$-definable subsets of $G$ by $(g\cdot \phi)(x) = \phi(g^{-1}\cdot x)$ and on measures by $(g\cdot \mu)(\phi(x))=\mu(\phi(g \cdot x))$.

One could also consider the right action of $G$ on itself and obtain corresponding notions. Contrary to the theory of stable groups, this would not yield equivalent definitions (see Section \ref{sec_leftright} for a discussion).

\begin{defi}\label{def: def amenable group}
The group $G$ is \emph{definably amenable} if it admits a global Keisler measure $\mu$ on definable subsets of $G(\M)$ which is invariant under (left-) translation by elements of $G(\monster)$.
\end{defi}

As explained for example in \cite[8.2]{SimBook}, if for some model $M$, there is a $G(M)$-invariant Keisler measure on $M$-definable subsets of $G$, then $G$ is definably amenable (it can be seen by taking an elementary extension $M$ expanded by predicates for the invariant measure).

\begin{defi} \label{def: f-gen}
\begin{enumerate}
\item Let $\phi(x)$ be a subset of $G$ defined over some model $M$. We say that $\phi(x)$ (left-)\emph{$G$-divides} if there is an $M$-indiscernible sequence $(g_i:i<\omega)$ such that $\{g_i \cdot \phi(x) :i<\omega\}$ is inconsistent.

\item The formula $\phi(x)$ is (left-)\emph{$f$-generic over $M$} if no translate of $\phi(x)$ forks over $M$. We say that $\phi(x)$ is \emph{$f$-generic} if it is $f$-generic over some small $M$. A (partial) type is $f$-generic if every formula implied by it is $f$-generic.

\item A global type $p$ is called (left-)\emph{strongly f-generic} over $M$ if no $G(\M)$-translate of $p$ forks over $M$. A global type $p$ is strongly $f$-generic if it is strongly $f$-generic over some small model $M$.

\end{enumerate}
\end{defi}

Note that we change the usual terminology: our notion of strongly $f$-generic corresponds to what was previously called $f$-generic in the literature (see e.g. \cite{NIP2}). We feel that this change is justified by the development of the theory presented here.

Note that if $\mu$ is a global $G$-invariant and $M$-invariant measure and $p\in S(\mu)$, then $p$ is strongly f-generic over $M$ since all its translates are weakly-random for $\mu$. It is shown in \cite{NIP2} how to conversely obtain a measure $\mu_p$ from a strongly f-generic type $p$. We summarize some of the results from \cite{NIP2} in the following fact.

Recall that the stabilizer of $p$ is $\Stab_G(p)=\{g\in G : g\cdot p = p\}$.

\begin{fact}\label{fac: BasicDefAm}
\begin{enumerate}
\item If $G$ admits a strongly $f$-generic type over some small model $M$, then it admits a strongly $f$-generic type over any model $M_0$.

\item If $p$ is strongly $f$-generic then $\Stab_G(p) = G^{00} = G^{\infty} ( = \langle \{ g^{-1} h : g \equiv_M h \} \rangle$ for any small model $M$).

\item The group $G$ admits a $G$-invariant measure if and only if there is a global strongly $f$-generic type in $S_G(\M)$.
\end{enumerate}
\end{fact}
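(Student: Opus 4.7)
The plan is to prove (2) first, then use it for (3), then derive (1) from (3).

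For (2), fix $p$ strongly $f$-generic over $M$. By the basic NIP equivalence of non-forking and $M$-invariance (Section 2.2), each translate $g \cdot p$ is $M$-invariant, so $G \cdot p \subseteq \Inv(\M/M)$, a set of size at most $2^{|M|+|T|}$; hence $\Stab_G(p)$ has bounded index in $G$. For $a, b \in G(\M)$ with $a \equiv_M b$, $M$-invariance of $p$ gives $a \cdot p = b \cdot p$, so $a^{-1}b \in \Stab_G(p)$, and Remark 2.12 delivers $G^{\infty} \subseteq \Stab_G(p)$. Combined with the always-valid $G^{\infty} \subseteq G^{00}$, the full equality $\Stab_G(p) = G^{00} = G^{\infty}$ reduces to two NIP-specific inputs: (a) that $G^{\infty}$ is type-definable (hence $G^{\infty} \supseteq G^{00}$), and (b) that the $G$-action on $p$ is faithful modulo $G^{00}$ (so $\Stab_G(p) \subseteq G^{00}$). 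For both I would leverage Borel definability of $p$ (Section 2.2) to describe $\Stab_G(p)$ as an intersection of $M$-type-definable bounded-index subgroups, and combine this with the bounded-orbit hypothesis.

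For (3)($\Rightarrow$), if $\mu$ is a global $G$-invariant measure, it is $M$-invariant for some small model $M$ by the nonforking extension theory for measures in Section 2.3. For any $p \in S(\mu)$, every formula in $p$ has positive $\mu$-measure and therefore does not fork over $M$, whence $p$ is $M$-invariant. Since $\mu$ is $G$-invariant, $S(\mu)$ is setwise $G(\M)$-invariant, so every translate $g \cdot p$ is also $M$-invariant, giving $p$ strongly $f$-generic over $M$. For (3)($\Leftarrow$), given $p$ strongly $f$-generic over $M$, (2) yields $\Stab_G(p) = G^{00}$, and hence a $G$-equivariant bijection $G/G^{00} \to G \cdot p$, $gG^{00} \mapsto g \cdot p$. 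I define
\[
\mu_p(\phi) = h_0\bigl(\{gG^{00} : g \cdot p \models \phi\}\bigr)
\]
for $\phi \in L(\M)$, where $h_0$ is the normalized Haar measure on the compact group $G/G^{00}$; Borel definability of $p$ ensures the integrand is Borel-measurable on $G/G^{00}$ with its logic topology, and left-invariance of $h_0$ translates into $G(\M)$-invariance of $\mu_p$.

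For (1), starting with $p$ strongly $f$-generic over $M$, apply (3)($\Leftarrow$) to obtain a $G$-invariant measure $\mu_p$. Its restriction to $M_0$-definable sets is a $G(M_0)$-invariant Keisler measure over $M_0$; by the observation after Definition 3.1 (or equivalently via the nonforking extension theory for measures of Section 2.3), this lifts to a global $G$-invariant, $M_0$-invariant measure $\nu$. Applying (3)($\Rightarrow$) to $\nu$ then produces a type strongly $f$-generic over $M_0$.

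The principal obstacle is showing $\Stab_G(p) = G^{00}$ in (2). The lower bound $G^{\infty} \subseteq \Stab_G(p)$ comes for free from the $a \equiv_M b$ argument, but extracting type-definability of $G^{\infty}$ (to collapse it with $G^{00}$) and simultaneously ruling out elements outside $G^{00}$ that fix $p$ together constitute the delicate NIP plus definable-amenability content, hinging crucially on Borel definability of invariant types and the bounded-orbit structure.
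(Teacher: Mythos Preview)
The paper does not prove this statement; it is quoted as a fact from \cite{NIP2}, so there is no in-paper proof to compare against. Judged on its own, your sketch has two gaps.

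In (3)($\Rightarrow$) you assert that a given global $G$-invariant measure $\mu$ is automatically $M$-invariant for some small $M$. Section~\ref{sec: Keisler measures} does not say this---it says only that a measure non-forking over $M$ is $M$-invariant, not that $G$-invariance forces non-forking over some $M$. The argument in \cite{NIP2} first passes from $\mu$ to a \emph{new} measure that is simultaneously $G$-invariant and $M$-invariant (for instance by taking a global coheir of $\mu|_M$), and only then picks a type in its support.

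More seriously, your plan for (2) does not close. You correctly obtain $G^\infty \subseteq \Stab_G(p)$, though the one-line justification should invoke $M$-invariance of the translate $a\cdot p$ and not only of $p$: for $\sigma\in\Aut(\M/M)$ with $\sigma(a)=b$ one has $\sigma(a\cdot p)=b\cdot\sigma(p)=b\cdot p$, and separately $\sigma(a\cdot p)=a\cdot p$ by $M$-invariance of $a\cdot p$. You then propose to handle both (a) and (b) by exhibiting $\Stab_G(p)$ as an intersection of type-definable bounded-index subgroups. But that yields $G^{00}\subseteq\Stab_G(p)$, the \emph{opposite} of your (b), and together with what you already have gives only $G^\infty,G^{00}\subseteq\Stab_G(p)$---no equality follows. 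The ingredient you are missing is the elementary upper bound $\Stab_G(p)\subseteq G^\infty$: for $g\in\Stab_G(p)$, take a small $N\supseteq M\cup\{g\}$ and $b\models p|_N$; then $g\cdot b\models p|_N$ as well, so $g\cdot b\equiv_N b$ and $g=(g\cdot b)\cdot b^{-1}\in G^\infty$ by Remark~\ref{rem: G000 generators}. With this in hand, the type-definability step (which is indeed essentially the route taken in \cite{NIP2}) gives $G^{00}\subseteq\Stab_G(p)=G^\infty\subseteq G^{00}$, and all three coincide.
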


Our first task is to understand basic properties of f-generic formulas and types.

\begin{prop}\label{prop_Gdivide} Let $G$ be a definably amenable group, and let $\phi(x)\in L_G(M)$. Let also $p(x)\in S_G(\monster)$ be strongly f-generic, $M$-invariant and take $g\models p|_M$. Then the following are equivalent:

1. $\phi(x)$ is f-generic over $M$;

2. $\phi(x)$ does not $G$-divide;

3. $g^{-1}\cdot \phi(x)$ does not fork over $M$.
\end{prop}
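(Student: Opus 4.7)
The equivalence $(1)\Leftrightarrow(2)$ is essentially tautological given Fact~\ref{fac:ForkingNTP2}(1). If $\phi(x)$ $G$-divides via an $M$-indiscernible sequence $(h_i)$ with $\{h_i\cdot\phi\}$ inconsistent, then $h_0\cdot\phi$ divides (hence forks) over $M$, contradicting $f$-genericity. Conversely, if $h\cdot\phi$ divides over $M$ for some $h$, an $M$-indiscernible witness $(c_i)$ with $c_0=h^{-1}$ and $\{\phi(c_i x)\}=\{c_i^{-1}\cdot\phi\}$ inconsistent becomes, after inversion, an $M$-indiscernible witness for $G$-dividing of $\phi$. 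The implication $(1)\Rightarrow(3)$ is immediate upon applying the definition of $f$-genericity with $h=g^{-1}$.

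The substantive direction is $(3)\Rightarrow(2)$. The plan is to argue by contradiction, using $g$ to translate any putative $G$-dividing witness into one starting at an element $\equiv_M g^{-1}$, thereby forcing $g^{-1}\cdot\phi$ to divide over $M$. Assume $\phi$ $G$-divides, witnessed by an $M$-indiscernible sequence $(h_i)_{i<\omega}$ with $\{h_i\cdot\phi\}$ being $k$-inconsistent (via compactness and Ramsey). The crucial input is the following consequence of strong $f$-genericity combined with $M$-invariance of $p$: if $h\equiv_M h'$ then $h^{-1}h'\in\Stab_G(p)$. Indeed, picking $\sigma\in\Aut(\M/M)$ sending $h$ to $h'$, $M$-invariance of both $p$ and $h\cdot p$ (the latter from strong $f$-genericity) gives $h'\cdot p=\sigma(h\cdot p)=h\cdot p$, so $h^{-1}h'$ fixes $p$. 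In particular $h_0^{-1}h_{i+1}\in\Stab_G(p)$ for every $i$.

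Take $g^*\models p|_{M(h_i)_{i<\omega}}$. By $M$-invariance of $p$, the sequence $(h_i)$ is $Mg^*$-indiscernible, hence the tail $(h_i)_{i\ge 1}$ is $Mh_0g^*$-indiscernible. Set $d_i:=g^{*-1}h_0^{-1}h_{i+1}$ for $i\ge 0$. Then $(d_i)$ is obtained from $(h_{i+1})_{i\ge 0}$ by left multiplication by the $Mh_0g^*$-definable constant $g^{*-1}h_0^{-1}$, hence is $M$-indiscernible. Furthermore $d_i^{-1}=(h_0^{-1}h_{i+1})^{-1}g^*$ realizes $p|_{Mh_0h_{i+1}}$, since left multiplication by $(h_0^{-1}h_{i+1})^{-1}\in\Stab_G(p)$ sends realizations of $p|_{Mh_0h_{i+1}}$ to realizations of the same type; so $d_i\equiv_M g^{*-1}\equiv_M g^{-1}$. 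Finally, writing $X:=\phi(\M)$, one has $\bigcap_i d_i X = g^{*-1}h_0^{-1}\bigcap_{j\ge 1}h_j X$, and the latter intersection is empty by $k$-inconsistency applied to any $k$ indices in $\{1,2,\ldots\}$; so $\{d_i\cdot\phi\}$ is inconsistent. Pulling back by an $M$-automorphism sending $d_0$ to $g^{-1}$ produces an $M$-indiscernible witness for $g^{-1}\cdot\phi$ dividing over $M$, the desired contradiction. The main obstacle is ensuring that the starting element of the shifted sequence is $M$-conjugate to $g^{-1}$, and this is precisely the step where strong $f$-genericity (via the $\Stab_G(p)$-computation) is indispensable; the rest is bookkeeping with indiscernibility and translation of the dividing set.
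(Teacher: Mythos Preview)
Your proof is correct and follows essentially the same route as the paper. The only minor difference is that in $(3)\Rightarrow(2)$ you verify $h_0^{-1}h_{i+1}\in\Stab_G(p)$ directly from $M$-invariance of $p$ and of its translates, whereas the paper cites Fact~\ref{fac: BasicDefAm}(2) (that $\Stab_G(p)=G^{00}$) together with the observation that indiscernible elements lie in a common $G^{00}$-coset; otherwise the construction of the $M$-indiscernible dividing witness for $g^{-1}\cdot\phi$ is identical.
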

\begin{proof}
$(2) \Rightarrow (1)$: Assume that some translate $h\cdot \phi(x)$ forks over $M$. Then it divides over $M$, and as $\phi(x)$ is over $M$, we obtain an $M$-indiscernible sequence $(h_i:i<\omega)$ such that $\{h_i \cdot \phi(x):i<\omega\}$ is inconsistent. This shows that $\phi(x)$ $G$-divides.


$(1) \Rightarrow (3)$: Clear.

$(3) \Rightarrow (2)$: Assume that $\phi(x)$ does $G$-divide and let $(g_i:i<\omega)$ be an $M$-indiscernible sequence witnessing it, i.e., $\{ g_i \cdot \phi(x) : i < \omega \}$ is $k$-inconsistent for some $k < \omega$. By indiscernibility, all of $g_{i}$'s are in the same $G^{00}$-coset, and replacing $g_i$ by $g_0^{-1}g_{i+1}$, we may assume that $g_{i} \in G^{00}$ for all $i$.

Let $h$ realize $p$ over $(g_i)_{i < \omega} M$. Then $g_i^{-1}\cdot h \models p|_M$ by $G^{00}$-invariance of $p$. As the set $\{g_i\cdot \phi(x):i<\omega\}$ is inconsistent, so is $\{h^{-1}g_i \cdot \phi(x):i<\omega\}$. Then the sequence $(g_i^{-1}\cdot h:i<\omega)$ is an $M$-indiscernible sequence in $p|_M = \tp(g/M)$ (as $\tp(h/(g_i)_{i < \omega} M)$ is $M$-invariant). Therefore $g^{-1}\cdot \phi(x)$ divides over $M$.
\end{proof}

Note that we do not say ``$G$-divides over $M$", because the model $M$ does not matter in the definition: for any $M \prec N$, an $M$-definable $\phi(x)$ $G$-divides over $M$ if and only if it $G$-divides over $N$. Therefore the same is true for $f$-genericity (i.e. if $\phi(x)$ is both $M$-definable and $N$-definable, then it is $f$-generic over $M$ if and only if it is $f$-generic over $N$) and from now on we will just say $f$-generic, without specifying the base.

\begin{cor}\label{prop_ideal}
Let $G$ be definably amenable. The family of non-f-generic formulas (equivalently, $G$-dividing formulas) forms an ideal. In particular, every partial $f$-generic type extends to a global one.
\end{cor}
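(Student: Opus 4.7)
The plan is to pull $f$-genericity back to non-forking via Proposition~\ref{prop_Gdivide}, inherit the ideal structure from Fact~\ref{fac:ForkingNTP2}(1), and then extract the extension statement by a standard compactness argument.

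For the ideal property, given non-$f$-generic formulas $\phi_1,\ldots,\phi_n$, I would first fix a small model $M$ containing all their parameters together with a strongly $f$-generic $M$-invariant global type $p$ (which exists by Fact~\ref{fac: BasicDefAm}(1), using that $G$ is definably amenable), and pick $g\models p|_M$. By Proposition~\ref{prop_Gdivide}(3) each $g^{-1}\cdot\phi_i$ forks over $M$, and since forking over $M$ is closed under finite disjunctions (Fact~\ref{fac:ForkingNTP2}(1)), the disjunction $g^{-1}\cdot(\phi_1\lor\cdots\lor\phi_n)$ forks over $M$ as well; reading Proposition~\ref{prop_Gdivide} in reverse then shows that $\phi_1\lor\cdots\lor\phi_n$ is non-$f$-generic. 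Downward closure of non-$f$-genericity is analogous: if $\phi\vdash\psi$ and $\psi$ is non-$f$-generic, choose $M$ containing the parameters of both and $g$ as above; then $g^{-1}\phi\vdash g^{-1}\psi$ and $g^{-1}\psi$ forks, hence $g^{-1}\phi$ forks (a formula implying a forking formula forks, again by the ideal property of forking), making $\phi$ non-$f$-generic. Together with the trivial fact that $x=x$ is $f$-generic, this yields a proper ideal.

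For the extension, let $\pi(x)$ be a partial $f$-generic type, which I take to be closed under finite conjunctions (i.e.\ identified with the filter it generates). I would show that $\pi(x)\cup\{\neg\chi(x):\chi\text{ non-}f\text{-generic}\}$ is consistent by compactness. A finite inconsistency translates to $\phi_1\land\cdots\land\phi_n\vdash\chi_1\lor\cdots\lor\chi_m$ for some $\phi_i\in\pi$ and non-$f$-generic $\chi_j$; union-closure then forces $\chi:=\bigvee_j\chi_j$ to be non-$f$-generic, and downward closure forces the conjunction $\phi:=\bigwedge_i\phi_i$ to be non-$f$-generic. But $\phi\in\pi$ by closure of $\pi$ under conjunction, so $\phi$ is $f$-generic, a contradiction. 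Hence $\pi(x)\cup\{\neg\chi(x):\chi\text{ non-}f\text{-generic}\}$ is consistent, and any completion is a global $f$-generic type extending $\pi$.

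The substantive content has already been carried out in Proposition~\ref{prop_Gdivide}: once left-multiplication by a suitable $g$ is known to carry $f$-genericity onto non-forking, both the ideal property and its consequence for partial types transfer formally from the forking ideal. The one point that requires some care is choosing the base model $M$ so as to accommodate both the given formulas and an $M$-invariant strongly $f$-generic witness simultaneously, which is precisely what Fact~\ref{fac: BasicDefAm}(1) guarantees.
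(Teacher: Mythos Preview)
Your proof is correct and follows essentially the same approach as the paper: translate non-$f$-genericity into forking over $M$ via Proposition~\ref{prop_Gdivide}(3) using a realization $g\models p|_M$ of a strongly $f$-generic $M$-invariant type, invoke the ideal property of forking, and translate back. The paper's proof is terser---it handles only union-closure explicitly and leaves downward closure and the compactness argument as evident---but the mathematical content is the same.
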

\begin{proof}
Assume that $\phi(x),\psi(x)$ are not $f$-generic, and let $M$ be some small model over which both formulas are defined. Let also $p$ be a global type strongly $f$-generic over $M$ (exists by Fact \ref{fac: BasicDefAm}) and take $g \models p|_M$. Then by Fact \ref{prop_Gdivide}(3) we have that both $g^{-1} \cdot \phi(x), g^{-1} \cdot \psi(x)$ fork over $M$, in which case $g^{-1} \cdot (\phi(x) \lor \psi(x)) = g^{-1} \cdot \phi(x) \lor g^{-1}\cdot \psi(x)$ also forks over $M$. Applying Fact \ref{prop_Gdivide}(3) again it follows that $\phi(x) \lor \psi(x)$ is not $f$-generic.

The ``in particular" statement follows by compactness.
\end{proof}


\begin{lem} \label{NonGDivFormulaG00Inv}
Let $G$ be definably amenable, let $\phi(x) \in L_G(\M)$ be a formula and $g\in G^{00}$. Then $\phi(x)\triangle g\cdot \phi(x)$ is not $f$-generic (and hence it $G$-divides by Proposition \ref{prop_Gdivide}).
\end{lem}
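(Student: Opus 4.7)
The plan is to proceed by contradiction. Assume $\psi := \phi(x) \triangle g \cdot \phi(x)$ is $f$-generic. Fix a small model $M$ containing $g$ and the parameters of $\phi$, and use Fact \ref{fac: BasicDefAm} to choose a global strongly $f$-generic type $p$ over $M$. By Fact \ref{fac: BasicDefAm}(2), $p$ is $M$-invariant with $\Stab(p) = G^{00}$. Since $g \in G^{00}$, we have $g \cdot p = p$, hence $\chi \in p \iff g \cdot \chi \in p$ for every formula $\chi$. The key consequence for the proof is the following: for any base $B \supseteq M$ and any $h \models p|_B$, the element $g^{-1}h$ also realizes $p|_B$ (because $g^{-1} \in \Stab(p)$), so $h \equiv_B g^{-1}h$, and thus $ha \equiv_B g^{-1}ha$ for every $a$ in $B$.

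Next, pick $h \models p|_M$. Since $\psi$ is $f$-generic, Proposition \ref{prop_Gdivide}(3) yields that $h^{-1} \cdot \psi$ does not fork over $M$, so by Remark \ref{rem: non-dividing extends to invariant} it extends to a global $M$-invariant type $q$. Using Fact \ref{fac:ForkingNTP2}(2), I would in fact take $q$ to be a strictly non-forking extension. Let $a \models q|_{Mh}$; then by construction $a \models h^{-1}\psi$, i.e., $\phi(ha) \oplus \phi(g^{-1}ha)$ holds.

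The heart of the argument — and the main obstacle — is to show that $\tp(h/Ma) = p|_{Ma}$. Once this is established, the opening observation applies with $B = Ma$ to give $ha \equiv_{Ma} g^{-1}ha$, and therefore $\phi(ha) \iff \phi(g^{-1}ha)$ (since $\phi \in L(M) \subseteq L(Ma)$), directly contradicting the XOR condition above. To obtain $\tp(h/Ma) = p|_{Ma}$, one invokes the symmetric behaviour of strict non-forking over models in NIP: strictness of the chosen extension forces $\tp(h/Ma)$ not to fork over $M$, hence to be some $M$-invariant extension of $p|_M$; the specific structure of the strongly $f$-generic $p$ — and in particular that $\Stab(p) = G^{00}$ — is then used to pin this extension down to $p|_{Ma}$ itself rather than to some other $M$-invariant extension of $p|_M$. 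This pinning-down step, which rules out that the non-forking extension of $\tp(h/M)$ landed outside $p$, is where the argument requires more than routine manipulation of the forking calculus.
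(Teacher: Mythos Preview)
There is a genuine gap at what you call the ``pinning-down step.'' You need $\tp(h/Ma)=p|_{Ma}$, but nothing you have arranged guarantees this. Strictness of $q$ over $M$ tells you that $\tp(h/Ma)$ does not fork over $M$, hence is the restriction of \emph{some} global $M$-invariant extension of $p|_M$; it does not tell you it is the restriction of $p$ itself. A strongly $f$-generic type can perfectly well have many $M$-invariant siblings sharing its restriction to $M$, and the fact that $\Stab(p)=G^{00}$ gives no leverage for distinguishing among them. Your final sentence acknowledges that this step is nontrivial but does not supply the argument, and I do not see one.

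The paper's proof sidesteps this difficulty entirely by a direct observation you have already essentially made but then abandoned. With $h\models p|_M$ and $g^{-1}\in G^{00}=\Stab(p)$ one has $h\equiv_M g^{-1}h$. Now simply compute
\[
h^{-1}\cdot\psi \;=\; (h^{-1}\cdot\phi(x))\ \triangle\ (h^{-1}g\cdot\phi(x)),
\]
and observe that the two formulas on the right are $M$-conjugates (their parameters are $h$ and $g^{-1}h$, which have the same type over $M$). Hence \emph{no} global $M$-invariant type can contain their symmetric difference, so $h^{-1}\cdot\psi$ forks over $M$, and Proposition~\ref{prop_Gdivide} (implication $(1)\Rightarrow(3)$) finishes. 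Equivalently, in your own notation: the type $q$ you built is $M$-invariant, so $\phi(hx)\in q\iff\phi(g^{-1}hx)\in q$ directly, contradicting $h^{-1}\psi\in q$. There is no need to realize $q$ by an element $a$ and no need to control $\tp(h/Ma)$.
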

\begin{proof}
Let $M$ be a model over which $\phi(x)$ and $g$ are defined. Let $p\in S_G(\monster)$ be a global strongly f-generic type which is $M$-invariant (exists by Fact \ref{fac: BasicDefAm}(1)) and let $h$ realize $p$ over $Mg$. Then $h^{-1}\cdot (\phi(x)\triangle g\cdot \phi(x)) = (h^{-1}\cdot \phi(x)) \triangle (h^{-1}g\cdot \phi(x))$. Since $h \equiv_M g^{-1}h$ (as $g^{-1} \in \Stab_G(p)$ by Fact \ref{fac: BasicDefAm}(2)), the  latter formula cannot belong to any global $M$-invariant type, and so it must fork over $M$ by Remark \ref{rem: non-dividing extends to invariant}. Hence $\phi(x)\triangle g\cdot \phi(x)$ is not $f$-generic.
\end{proof}

\begin{defi} A global type $p(x) \in S_G(\M)$ has a \emph{bounded orbit} if $| G\cdot p |<\kappa$ for some strong limit cardinal $\kappa$ such that $\M$ is $\kappa$-saturated.
\end{defi}

\begin{prop}\label{prop:f-gen iff G00-inv}
Let $G$ be definably amenable. For $p\in S_G(\monster)$, the following are equivalent:
\begin{enumerate}
\item $p$ is f-generic,
\item $p$ is $G^{00}$-invariant (and $\Stab_G(p) = G^{00}$),
\item $p$ has a bounded orbit.
\end{enumerate}
\end{prop}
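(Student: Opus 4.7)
The strategy is to prove the cycle $(2) \Rightarrow (3) \Rightarrow (1) \Rightarrow (2)$, establishing the equality $\Stab_G(p) = G^{00}$ as part of the last implication.

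\emph{$(2) \Rightarrow (3)$:} Immediate, since $|G\cdot p| = |G/\Stab_G(p)| \leq |G/G^{00}| \leq 2^{|T|}$ is bounded.

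\emph{$(3) \Rightarrow (1)$:} Fix $\phi(x) \in p$ over a small model $M$; by Proposition \ref{prop_Gdivide} it suffices to show $\phi$ does not $G$-divide. Suppose it does, and let $(g_i)_{i<\omega}$ be an $M$-indiscernible witness with $\{g_i \cdot \phi(x)\}_{i<\omega}$ $k$-inconsistent for some $k$. Extend by compactness to an $M$-indiscernible sequence $(g_i)_{i<\lambda^+}$, where $\lambda = |G \cdot p|$; the $k$-inconsistency is preserved by indiscernibility. Pigeonhole on the map $i \mapsto g_i \cdot p \in G \cdot p$ yields a subset $S \subseteq \lambda^+$ of cardinality $\lambda^+$ on which this map is constant, say equal to $q^* \in S_G(\M)$. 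Extract an $\omega$-subsequence $(g_{i_n})_{n<\omega}$ from $S$: it is still $M$-indiscernible and still $k$-inconsistent (sub-families of $k$-inconsistent families are $k$-inconsistent). However $\phi \in p$ gives $g_{i_n} \cdot \phi \in g_{i_n} \cdot p = q^*$ for every $n$, so every finite subfamily of $\{g_{i_n} \cdot \phi\}_{n<\omega}$ is consistent (being contained in a single complete type), a contradiction.

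\emph{$(1) \Rightarrow (2)$:} For $G^{00} \subseteq \Stab_G(p)$, fix $g \in G^{00}$ and any $\phi(x) \in L(\M)$. Applying Lemma \ref{NonGDivFormulaG00Inv} to $g^{-1} \in G^{00}$, the symmetric difference $\phi(x) \triangle (g^{-1} \cdot \phi(x))$ is not $f$-generic, hence does not lie in the $f$-generic type $p$. This forces $\phi \in p \Leftrightarrow g^{-1} \cdot \phi \in p$, equivalently $\phi \in p \Leftrightarrow \phi \in g \cdot p$. Since $\phi$ was arbitrary, $g \cdot p = p$. For $\Stab_G(p) \subseteq G^{00}$ (which does not use $f$-genericity), take $g \in \Stab_G(p)$ and any $a \models p$; then $ga \models g \cdot p = p$, so $a \equiv_{\M} ga$, in particular $a \equiv_{\emptyset} ga$. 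By Remark \ref{rem: G000 generators}, $g = a^{-1}(ga) \in G^{\infty}$. Finally, definable amenability furnishes a strongly $f$-generic type, so by Fact \ref{fac: BasicDefAm}(2) we have $G^{\infty} = G^{00}$, and thus $g \in G^{00}$.

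\emph{Main obstacle.} The only substantive step is the pigeonhole/extraction in $(3) \Rightarrow (1)$: one must combine the bounded-orbit hypothesis with a sufficiently long extension of the indiscernible sequence witnessing $G$-dividing, and verify that both $M$-indiscernibility and $k$-inconsistency survive restriction to a monochromatic subsequence. Everything else reduces to Lemma \ref{NonGDivFormulaG00Inv}, Proposition \ref{prop_Gdivide}, Remark \ref{rem: G000 generators}, and the fact that $G^{00} = G^{\infty}$ in the definably amenable NIP setting.
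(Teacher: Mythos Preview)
Your cycle $(2)\Rightarrow(3)\Rightarrow(1)\Rightarrow(2)$ and the ingredients you invoke match the paper's proof almost exactly; in particular your $(3)\Rightarrow(1)$ is just a more explicit version of the paper's pigeonhole (the paper simply says ``which implies that the $G$-orbit of $p$ is unbounded'').

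There are, however, two small slips in your argument for $\Stab_G(p)\subseteq G^{00}$ in $(1)\Rightarrow(2)$:

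\begin{itemize}
\item The identity $g = a^{-1}(ga)$ is false: $a^{-1}(ga) = a^{-1}ga$ is a conjugate of $g$, not $g$ itself. You should either write $g = (ga)\cdot a^{-1}$, or keep $a^{-1}ga\in G^{\infty}$ and then invoke normality of $G^{\infty}$ to conclude $g\in G^{\infty}$.
\item You take $a\models p$ for the \emph{global} type $p\in S_G(\M)$, so $a$ lives in a proper elementary extension of $\M$. But Remark~\ref{rem: G000 generators} describes $G^{\infty}$ as a subgroup of $G(\M)$ generated by elements $h_1^{-1}h_2$ with $h_1,h_2\in G(\M)$; your $a,ga$ are not in $G(\M)$. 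The paper sidesteps this by choosing a small model $M\ni g$ and taking $b\models p|_M$, so that $b,gb\in G(\M)$ and $gb\equiv_M b$ already gives $g=(gb)b^{-1}\in G^{\infty}$ directly. (Your version can be repaired by passing to a larger monster and arguing that $G^{00}$, being type-definable, is absolute, but this is an extra step you have not written.)
\end{itemize}

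Both points are easily fixed and the overall strategy is correct and the same as the paper's.
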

\begin{proof}
(1) $\Rightarrow$ (2): If $p$ is not $G^{00}$-invariant then $\phi(x) \triangle g\phi(x) \in p$ for some $g \in G^{00}, \phi(x) \in L_G(\M)$, and so $p$ is not $f$-generic by Lemma \ref{NonGDivFormulaG00Inv}. 
Hence $G^{00} \subseteq \Stab_G(p)$. Given an arbitrary $a \in \Stab_G(p)$, let $M$ be a small model containing $a$ and let $b \models p|_{M}$. Then $a\cdot b \models p|_{M}$, hence $a = (a\cdot b) \cdot b^{-1}$ and $a \cdot b \equiv_M b$. By Fact \ref{fac: BasicDefAm}(2) it follows that $a \in G^{00}$, hence $\Stab_G(p) = G^{00}$.

(2) $\Rightarrow$ (3): If $p$ is $G^{00}$-invariant, then the size of its orbit is bounded by the index of $G^{00}$ (which is $\leq 2^{|T|}$).

(3) $\Rightarrow$ (1): If $p$ is not $f$-generic, then some $\phi(x) \in p$ must $G$-divide (by Proposition \ref{prop_Gdivide}). Then, as in the proof of Proposition \ref{prop_Gdivide}, we can find an arbitrarily long indiscernible sequence $(g_i)_{i<\lambda}$ in $G^{00}$ such that $\{ g_i\phi(x) : i < \lambda \}$ is $k$-inconsistent for some $k< \omega$, which implies that the $G$-orbit of $p$ is unbounded.
\end{proof}

Next we clarify the relationship between $f$-generic and strongly $f$-generic types in definably amenable groups.

\begin{prop}\label{prop: f-gen iff inv and bdd orbit}
Let $G$ be definably amenable. A type $p\in S_G \left(\M\right)$ is strongly $f$-generic
if and only if it is $f$-generic and $M$-invariant over some small model $M$.\end{prop}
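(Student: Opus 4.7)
The plan is to prove each direction separately, with the forward implication essentially unpacking definitions using NIP and the real work concentrated in the converse.

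For ($\Rightarrow$), suppose $p$ is strongly $f$-generic over some small $M$. Taking $g=e$ shows $p$ does not fork over $M$, so by the NIP equivalence between non-forking and invariance for global types (Section \ref{sec: forking}), $p$ is $M$-invariant. For $f$-genericity, I would fix $\phi(x)\in p$ and let $M_\phi$ be a small model containing $M$ together with the parameters of $\phi$. For every $g\in G(\M)$, the global type $g\cdot p$ does not fork over $M$, hence is $M$-invariant, hence is $M_\phi$-invariant (since $M\subseteq M_\phi$), and therefore does not fork over $M_\phi$; in particular $g\cdot\phi\in g\cdot p$ does not fork over $M_\phi$, so $\phi$ is $f$-generic over $M_\phi$.

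For ($\Leftarrow$), assume $p$ is $f$-generic and $M$-invariant. The key input is Proposition \ref{prop:f-gen iff G00-inv}: $\Stab_G(p)=G^{00}$, so $g\cdot p$ depends only on the coset $gG^{00}$ and the orbit $G\cdot p$ has size at most $|G/G^{00}|\leq 2^{|T|}<\kappa$. I would then pick coset representatives $\{g_\alpha:\alpha<\lambda\}\subseteq G(\M)$ for $G/G^{00}$ with $\lambda<\kappa$, and let $M'\supseteq M$ be a small model containing all of them; this $M'$ is the candidate witness of strong $f$-genericity.

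To verify this, given any $g\in G(\M)$ pick $\alpha$ with $g\in g_\alpha G^{00}$, so $g\cdot p=g_\alpha\cdot p$; it then suffices to show $g_\alpha\cdot p$ is $M'$-invariant (equivalently, non-forking over $M'$ by NIP). If $c\equiv_{M'}c'$ then $(g_\alpha,c)\equiv_{M'}(g_\alpha,c')$ since $g_\alpha\in M'$, and a fortiori $(g_\alpha,c)\equiv_M(g_\alpha,c')$; rewriting $\phi(g_\alpha x,c)$ as $\psi(x,g_\alpha,c)$ for the appropriate $\psi\in L$, the $M$-invariance of $p$ yields $g_\alpha\cdot p\vdash\phi(x,c)\Leftrightarrow g_\alpha\cdot p\vdash\phi(x,c')$. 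The only real subtlety is noticing that $g_\alpha\cdot p$ is in general only $Mg_\alpha$-invariant, not $M$-invariant, so the whole purpose of enlarging $M$ to $M'$ is to absorb the representatives $g_\alpha$; saturation of $\M$ together with the bound $|G/G^{00}|\leq 2^{|T|}$ ensures this can be done inside a small model.
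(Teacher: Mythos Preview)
Your proof is correct, but the converse direction takes a different route from the paper and yields a slightly weaker conclusion.

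The paper argues by contraposition: assuming $p$ is $M$-invariant but \emph{not} strongly $f$-generic over $M$, one picks $g\in G$ and $\phi(x,a)\in p$ with $g\cdot\phi(x,a)$ dividing over $M$, witnessed by an arbitrarily long $M$-indiscernible sequence $(g_i,a_i)_{i<\kappa}$ with $\{g_i\cdot\phi(x,a_i)\}$ $k$-inconsistent. By $M$-invariance of $p$ each $\phi(x,a_i)$ lies in $p$, so the translates $g_i\cdot p$ each contain $g_i\cdot\phi(x,a_i)$; $k$-inconsistency then forces unboundedly many distinct translates, contradicting $f$-genericity via Proposition~\ref{prop:f-gen iff G00-inv}. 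The upshot is that $p$ is strongly $f$-generic \emph{over the very same $M$} that witnesses invariance.

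Your argument instead uses the bounded-orbit characterization to pick coset representatives $g_\alpha$ for $G/G^{00}$ and absorbs them into a larger model $M'\supseteq M$, over which every translate $g\cdot p=g_\alpha\cdot p$ is visibly invariant. This is clean and avoids the dividing computation, but it only establishes strong $f$-genericity over the enlarged $M'$, not over $M$ itself. The paper's approach thus buys a sharper statement (same base model), while yours is perhaps more transparent given the machinery already in place.
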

\begin{proof}
Strongly $f$-generic implies $f$-generic is clear.

Conversely, assume that $p$ is $M$-invariant, but not strongly $f$-generic
over $M$. Then $g\cdot p$ divides over $M$ for some $g\in G$. It follows
that there is some $\phi\left(x,a\right)\in p$ such that for any
$\kappa$ there is some $M$-indiscernible sequence $\left(g_{i}\hat{~}a_{i}\right)_{i<\kappa}$
with $g_{0}\hat{~}a_{0}=g\hat{~}a$ and such that $\left\{ g_{i}\cdot\phi\left(x,a_{i}\right)\right\} _{i<\kappa}$
is $k$-inconsistent for some $k < \omega$. By $M$-invariance of $p$ we have that $\phi\left(x,a_{i}\right)\in p$,
so $\left\{ g_{i}\cdot p\left(x\right)\right\} _{i<\kappa}$ is
$k$-inconsistent. This implies that the orbit of $p$ is unbounded, and that $p$ is not $f$-generic in view of Proposition \ref{prop:f-gen iff G00-inv}.\end{proof}

\begin{ex}
There are $f$-generic types which are not strongly $f$-generic. Let $\mathcal R$ be a saturated model of RCF. We give an example of a $G$-invariant (and so $f$-generic by Proposition \ref{prop:f-gen iff G00-inv}) type in $G = (\mathcal R^2;+)$ which is not invariant over any small model (and so not strongly $f$-generic by Proposition \ref{prop: f-gen iff inv and bdd orbit}). Let $p(x) \in S_1(\mathcal{R})$ denote the definable 1-type at $+\infty$ and $q(y) \in S_1(\mathcal{R})$ a global 1-type which is not invariant over any small model (hence corresponds to a cut of maximal cofinality from both sides). Then $p$ and $q$ are weakly orthogonal types. Let $(a,b)\models p\times q$ (in some bigger model) and consider $r :=\tp(a,a + b/\mathcal R)$. Then $r \in S_G(\mathcal{R})$ is a $G$-invariant type which is not invariant over any small model.
\end{ex}

The following lemma is standard.
\begin{lem}
\label{lem: extending f-generic types}
Let $N \succ M$ be $|M|^+$-saturated, and let $p \in S_G(N)$ be such that $g \cdot p$ does not fork over $M$ for every $g \in G(N)$. Then $p$ extends to a global type strongly $f$-generic over $M$.
\end{lem}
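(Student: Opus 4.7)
The plan is to produce a global $M$-invariant extension $\tilde p$ of $p$ and then to show, using the $|M|^+$-saturation of $N$ together with the hypothesis, that every $G(\M)$-translate of $\tilde p$ is non-forking over $M$.

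Taking $g = e$ in the hypothesis, $p$ itself does not fork over $M$, so by Remark \ref{rem: non-dividing extends to invariant} it extends to a global $M$-invariant type $\tilde p \in S_G(\M)$. By Definition \ref{def: f-gen}(3) it remains to verify that $g \cdot \tilde p$ does not fork over $M$ for every $g \in G(\M)$.

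The central argument is as follows. Suppose for contradiction that some $g \cdot \tilde p$ forks over $M$; pick a witness $\phi(x,\bar a) \in g \cdot \tilde p$ with $\phi(x,\bar a)$ forking over $M$. Unwinding the left action, this reads $\phi(gx,\bar a) \in \tilde p$, a formula whose parameters comprise the finite tuple $g\bar a$. By $|M|^+$-saturation of $N$ the type $\tp(g\bar a/M)$ is realized by some tuple $(g',\bar a') \in N$; since the defining formula of $G$ is over $\emptyset$ (hence over $M$), automatically $g' \in G(N)$. By $M$-invariance of $\tilde p$ we then have $\phi(g'x,\bar a') \in \tilde p$, and since this formula has parameters in $N$ it in fact lies in $p = \tilde p|_N$. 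Translating back, $\phi(x,\bar a') \in g' \cdot p$.

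Now invoke the hypothesis: since $g' \in G(N)$, $g' \cdot p$ does not fork over $M$, so $\phi(x,\bar a')$ does not fork over $M$. However, forking over $M$ depends only on the type of the parameters over $M$, and $\bar a' \equiv_M \bar a$, so $\phi(x,\bar a')$ and $\phi(x,\bar a)$ fork over $M$ together, contradicting our choice of $\phi(x,\bar a)$. The only real point of care is the bookkeeping that translates between ``$\phi(x,\bar a) \in g \cdot \tilde p$'' and ``$\phi(gx,\bar a) \in \tilde p$'' in accordance with the left action convention, and checking that $|M|^+$-saturation of $N$ suffices (which it does because $g\bar a$ is finite); once this is arranged, $M$-invariance of $\tilde p$ and the hypothesis do all the work.
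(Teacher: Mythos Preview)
Your argument is correct. The core mechanism---pull the offending parameters down into $N$ via $|M|^+$-saturation and then invoke the hypothesis on $p$---is the same as in the paper, but the execution differs. The paper argues directly by compactness that
\[
p(x)\ \cup\ \{\neg(g\cdot\phi(x,a)):g\in G(\M),\ \phi(x,a)\text{ forks over }M\}
\]
is consistent: if not, $p\vdash\bigvee_{i<n}g_i\cdot\phi_i(x,a_i)$, whence (by saturation) one may take the whole finite tuple $(g_i,a_i)_{i<n}$ to have an $M$-conjugate in $N$, contradicting the hypothesis for some $g_i'\in G(N)$. You instead first produce a specific $M$-invariant global extension $\tilde p$ of $p$ (citing Remark~\ref{rem: non-dividing extends to invariant}) and then check, one formula at a time, that every translate of this particular $\tilde p$ is non-forking, using $M$-invariance of $\tilde p$ in place of compactness. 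Your route is slightly more conceptual (pick a candidate and verify), at the cost of invoking the extra input that non-forking partial types extend to $M$-invariant global types; the paper's route is self-contained but handles a finite disjunction rather than a single formula. Either way the argument turns on the same saturation step and the $\Aut(\M/M)$-invariance of forking.
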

\begin{proof}
It is enough to show that $$p(x) \cup \left\{ \neg(g\cdot \phi(x,a)) : g \in G(\M), \phi(x,a) \in L(\M) \mbox{ forks over M} \right\}$$ is consistent. Assume not, then $p(x) \vdash \bigvee_{i<n} g_i \cdot \phi_i (x,a_i)$ for some $g_i \in G(\M)$, $\phi_i(x,y) \in L$ and $a_i \in \M$ such that $\phi_i(x,a_i)$ forks over $M$. By $|M|^+$-saturation of $N$ and compactness we can find some $(g'_i,a'_i )_{i<n} \equiv_M (g_i,a_i)_{i<n}$ in $N$ such that $p(x) \vdash \bigvee_{i<n} g'_i \cdot \phi_i (x,a'_i)$, which implies that $g'_i \cdot \phi_i(x,a'_i) \in p$ for some $i<n$, i.e., $(g'_i)^{-1} \cdot p$ forks over $M$. But this contradicts the assumption on $p$.
\end{proof}

Finally for this subsection, we prove Theorem \ref{thm: Main theorem 1}: for NIP groups, definable amenability is characterized by the existence of a type with a bounded orbit, proving Petrykowski's conjecture for $\NIP$ theories (see \cite[Conjecture 0.1]{New3}). In fact, existence of a measure with a bounded orbit is sufficient.
\begin{thm}
\label{thm: bdd orbit implies def amenable}Let $T$ be $\NIP$, $\M\models T$ and $G = G(\M)$ a definable group. Then the following are equivalent:
\begin{enumerate}
\item $G$ is definably amenable;
\item $|G\cdot p| \leq 2^{|T|}$ for some $p \in S_G(\M)$;
\item some measure $\mu \in \mes_G(\M)$ has a bounded $G$-orbit.
\end{enumerate}
\end{thm}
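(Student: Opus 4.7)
The implications $(1) \Rightarrow (2) \Rightarrow (3)$ are routine. For $(1) \Rightarrow (2)$, Fact \ref{fac: BasicDefAm} provides a global strongly $f$-generic type $p$ with $\Stab_G(p) = G^{00}$, and since $|G/G^{00}| \leq 2^{|T|}$, the $G$-orbit of $p$ has size at most $2^{|T|}$. For $(2) \Rightarrow (3)$, identify the type $p$ with the associated Dirac measure $\delta_p \in \mes_G(\M)$ (Remark \ref{rem: TopOnMeas}), which has the same $G$-orbit as $p$.

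The substantive direction is $(3) \Rightarrow (1)$. Given $\mu$ with bounded $G$-orbit, I would construct a $G$-invariant measure in three steps. \emph{Step 1:} Replace $\mu$ with a measure $\mu^* \in \overline{\conv(G \cdot \mu)} \subseteq \mes_G(\M)$ (a compact convex $G$-invariant set) that is invariant over some small model $M$ while retaining a bounded $G$-orbit. \emph{Step 2:} Upgrade $M$-invariance of $\mu^*$ to $G^{00}$-invariance. By Remark \ref{rem: G000 generators}, for any small $M' \supseteq M$, $G^{00} = G^{\infty}$ is generated by $\{h_1^{-1}h_2 : h_1 \equiv_{M'} h_2\}$; any $\sigma \in \Aut(\M/M')$ with $\sigma(h_1) = h_2$ satisfies $\sigma(\mu^*) = \mu^*$, so for any formula $\phi$ with parameters in $M'$ we have $\mu^*(\phi(h_1 x)) = \mu^*(\phi(h_2 x))$. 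Enlarging $M'$ to absorb the parameters of an arbitrary $\phi \in L(\M)$ and re-decomposing any $g \in G^{00}$ accordingly yields $g \cdot \mu^* = \mu^*$, so $G^{00} \subseteq \Stab(\mu^*)$. \emph{Step 3:} Average $\mu^*$ against the Haar measure $h_0$ on the compact topological group $G/G^{00}$. Since $\mu^*$ is $M$-invariant it is Borel definable over $M$, so $F_\phi(g) := \mu^*(\phi(gx))$ is a Borel function on $G$; by $G^{00}$-invariance and normality of $G^{00}$, $F_\phi$ is constant on $G^{00}$-cosets, descending to a Borel $\bar F_\phi$ on $G/G^{00}$. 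The prescription
\[
\nu(\phi(x)) := \int_{G/G^{00}} \bar F_\phi(gG^{00}) \, dh_0(gG^{00})
\]
defines a Keisler measure (additivity is routine), and for $k \in G$ the change of variable $g' = kg$ combined with left-invariance of $h_0$ gives $(k \cdot \nu)(\phi) = \nu(\phi)$, so $\nu$ is $G$-invariant.

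The principal obstacle is Step 1, since bounded $G$-orbit of $\mu$ does not a priori imply $M$-invariance for any small $M$ (the $G$-action and the $\Aut(\M)$-action on $\mes_G(\M)$ are distinct). My approach would be to pass to an ultralimit $\mu^* = \lim_\mathcal{U} \sigma_i(\mu')$ along an ultrafilter on $\Aut(\M/M_0)$ for a well-chosen base model $M_0$ and a suitable $\mu' \in \overline{\conv(G \cdot \mu)}$, exploiting continuity of the $G$-action on $\mes_G(\M)$ and NIP combinatorics (notably Fact \ref{fac: measure is average of types}, which approximates $\mu$ by finite type-averages) to ensure that the limit is $M_0$-invariant while the bounded-$G$-orbit condition is preserved.
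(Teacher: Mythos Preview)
Your outline diverges from the paper's argument and, as it stands, has genuine gaps in both Step 1 and Step 2.

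\textbf{Step 2 is circular and incomplete.} You invoke $G^{00}=G^{\infty}$, but in this paper that equality is obtained via Fact~\ref{fac: BasicDefAm}(2), which presupposes the existence of a strongly $f$-generic type---equivalently (Fact~\ref{fac: BasicDefAm}(3)) definable amenability, which is exactly what you are trying to prove. Without it you only have $G^{\infty}\subseteq G^{00}$, so your argument yields at best $G^{\infty}$-invariance of $\mu^*$, and then Step~3 breaks down since $G/G^{\infty}$ need not be a Hausdorff compact group carrying a Haar measure. Moreover, even aiming only for $G^{\infty}$-invariance, your inductive ``re-decompose over a larger $M'$'' manoeuvre does not go through: from $h_1\equiv_{M'}h_2$ and $\sigma\in\Aut(\M/M')$ with $\sigma(h_1)=h_2$, you get $(h_1\cdot\mu^*)(\phi)=(h_2\cdot\mu^*)(\phi)$ only for $\phi$ over $M'$. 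To peel off the next factor in a product decomposition $g=g_1\cdots g_n$ you must apply this to a formula whose parameters include the earlier $g_j$'s (and the $h_{j,i}$'s), which are not in $M'$; enlarging $M'$ forces a new decomposition with new parameters, and the process does not terminate.

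\textbf{Step 1 is where the real content lies, and your sketch does not supply it.} There is no evident reason an $\Aut(\M/M_0)$-ultralimit of $\sigma_i(\mu')$ should retain a bounded $G$-orbit: the $G$-action and the $\Aut(\M/M_0)$-action do not commute, and passing to $\overline{\conv(G\cdot\mu)}$ does nothing to produce $\Aut$-invariance. The paper avoids this difficulty entirely. It does not try to manufacture an $M$-invariant measure from $\mu$; instead it runs a strict Morley sequence $(N_i)_{i<\kappa}$ of $|M|^+$-saturated models over a small $M$, observes that by bounded orbit and pigeonhole one may assume all $g_i\mu$ coincide with a single $g\mu$, and then uses Fact~\ref{fact_indmeasures} together with Fact~\ref{fac:ForkingNTP2}(3) to derive a contradiction from the assumption that every restriction $\mu|_{N_i}$ has a $G(N_i)$-translate forking over $M$. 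The upshot is that some $\mu|_{N_i}$ has all $G(N_i)$-translates non-forking over $M$, whence (via Lemma~\ref{lem: extending f-generic types}) a type in its support extends to a global strongly $f$-generic type, and Fact~\ref{fac: BasicDefAm}(3) finishes. This bypasses both the need for $G^{00}=G^{\infty}$ and any averaging construction.
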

\begin{proof}

(1) $\Rightarrow$ (2): If $G$ is definably amenable, then it has a strongly $f$-generic type $p \in S_G(\M)$ by Fact \ref{fac: BasicDefAm} and such a type is $G^{00}$-invariant. In particular its orbit has size at most $|G/G^{00}| \leq 2^{|T|}$.

(2) $\Rightarrow$ (3) is obvious.

(3) $\Rightarrow$ (1): Assume that $\left|G \mu \right|<\kappa$, with $\kappa$ strong limit and $\M$ is $\kappa$-saturated. Let
$M$ be a model with $|M|=|T|$, let $N_{0}\succ M$ be an $|M|^+$-saturated submodel of $\M$ of size $2^{|M|} < \kappa$ (exists as $\kappa$ is a strong limit cardinal), and let $\left(N_{i}\right)_{i<\kappa}$
be a strict Morley sequence in $\tp\left(N_{0}/M\right)$ contained in $\M$ (exists by $\kappa$-saturation of $\M$ and Fact \ref{fac:ForkingNTP2}(2)). In particular
$N_{i}$ is an $|M|^+$-saturated extension of $M$ for all $i < \kappa$. 

Let $\mu_i = \mu| _{N_i}$. It is enough to show that for some $i < \kappa$, the measure $g \mu_i$ does not fork over $M$ for any $g \in G(N_i)$, as then any type in the support of $\mu_i$ extends to a global type strongly $f$-generic over $M$ by Lemma \ref{lem: extending f-generic types}, and we can conclude by Fact \ref{fac: BasicDefAm}.

Assume not, then for each $i< \kappa$ we have some $g_{i}\in G\left(N_{i}\right)$ and some $\phi_i(x,c_i) \in L(N_i)$
such that $g_i \mu_i (\phi(x,c_i)) > 0$ but $\phi_i(x,c_i)$ forks over $M$. 

As the orbit of $\mu$ is bounded, by throwing away some $i$'s we may assume that there is some $g \in G$ such that $g_i \mu = g \mu$ for all $i< \kappa$, in particular $(g\mu)|_{N_i} = g_i \mu_i$. By pigeonhole and the assumption on $\kappa$ we may assume also that there are some $\phi(x,y) \in L$ and $\varepsilon > 0$ such that $\phi_i(x,y_i) = \phi(x,y)$ and $g \mu(\phi(x,c_i)) > \varepsilon$ for all $i < \kappa$, and that the sequence $(c_i: i < \kappa)$ is indiscernible (i.e. the $c_i$'s occupy the same place in the enumeration of $N_i$, for all $i$, and the sequence $(N_i)_{i< \kappa}$ is indiscernible by construction). Applying Fact \ref{fact_indmeasures} to the measure $g \mu$ we conclude that $\{ \phi(x,c_i) : i < \kappa \}$ is consistent. But as $(c_i)$ is a strict Morley sequence, 
this contradicts the assumption that $\phi(x,c_i)$ divides over $M$ for all $i$, in view of Fact \ref{fac:ForkingNTP2}(3).\end{proof}

\begin{rem}
\begin{enumerate}
\item In the special case of types in $o$-minimal expansions of real closed fields this was proved in \cite[Corollary 4.12]{AnnalisaAnand}.

\item Theorem \ref{thm: bdd orbit implies def amenable} also shows that the issues with absoluteness of the existence of a bounded orbit considered in \cite{New3} do not arise when one restricts to NIP groups.
\end{enumerate}
\end{rem}

\subsection{Measures in definably amenable groups} \label{sec: ErgodicMeasures}

\subsubsection{Construction}
Again, we are assuming throughout this section that $G = G(\M)$ is an NIP group. We generalize the connection between $G$-invariant measures and strongly $f$-generic types from Fact \ref{fac: BasicDefAm} to $f$-generic types in definably amenable groups.

%
%

First we generalize Proposition \ref{prop:f-gen iff G00-inv} to measures.
\begin{prop} \label{lem_PosGInvMeasImpliesFGen} Let $G$ be definably amenable, and  let $\mu$ be a Keisler measure on $G$. The following are equivalent:
\begin{enumerate}
\item The measure $\mu$ is $f$-generic, that is $\mu(\phi(x)) > 0 $ implies $\phi(x)$ is $f$-generic for all $\phi(x) \in L_G(\M)$.
\item All types in the support $S(\mu)$ are $f$-generic.
\item The measure $\mu$ is $G^{00}$-invariant.
\item The orbit of $\mu$ is bounded.
\end{enumerate}
\end{prop}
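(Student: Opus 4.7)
The plan is to establish the cycle $(1)\Rightarrow(3)\Rightarrow(4)\Rightarrow(1)$ together with the almost tautological $(1)\Leftrightarrow(2)$. For $(1)\Leftrightarrow(2)$: any $p\in S(\mu)$ satisfies $\phi(x)\in p \Rightarrow \mu(\phi(x))>0$ by definition of the support, so (1) forces every $p\in S(\mu)$ to be $f$-generic; conversely, if $\mu(\phi)>0$, Fact \ref{fac: measure is average of types} (applied with $\Delta=\{\phi\}$ and any $\varepsilon<\mu(\phi)$) yields some $p\in S(\mu)$ with $\phi\in p$, so (2) forces $\phi$ to be $f$-generic.

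For $(1)\Rightarrow(3)$: given $g\in G^{00}$ and $\phi\in L_G(\M)$, Lemma \ref{NonGDivFormulaG00Inv} tells us that $\phi\triangle g\cdot\phi$ is not $f$-generic, so by (1) it has $\mu$-measure zero and hence $\mu(\phi)=\mu(g\cdot\phi)$. For $(3)\Rightarrow(4)$: a $G^{00}$-invariant measure has $G$-orbit of size at most $|G/G^{00}|\leq 2^{|T|}$, which is certainly bounded.

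The interesting direction is $(4)\Rightarrow(1)$, which I intend as a measure-theoretic analogue of $(3)\Rightarrow(1)$ in Proposition \ref{prop:f-gen iff G00-inv}, with Fact \ref{fact_indmeasures} taking over the role of direct orbit counting. Suppose for contradiction that $\mu(\phi)>0$ but $\phi$ is not $f$-generic. By Proposition \ref{prop_Gdivide}, $\phi$ $G$-divides, so by compactness one obtains, for $\kappa$ a sufficiently large strong limit cardinal such that $\M$ is $\kappa$-saturated, an $M$-indiscernible sequence $(g_i)_{i<\kappa}$ in $G(\M)$ such that $\{g_i\cdot\phi\}_{i<\kappa}$ is $k$-inconsistent for some fixed $k<\omega$. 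Since the $G$-orbit of $\mu$ has size $<\kappa$, pigeonhole yields an infinite $I\subseteq\kappa$ along which $g_i\cdot\mu$ equals a single measure $\nu$; the subsequence $(g_i)_{i\in I}$ remains $M$-indiscernible and $\{g_i\cdot\phi:i\in I\}$ remains $k$-inconsistent. Let $\psi(x,y):=\phi(y^{-1}\cdot x)$, so that $\psi(x,g_i)=g_i\cdot\phi(x)$; then $\nu(\psi(x,g_i))=(g_i\cdot\mu)(g_i\cdot\phi)=\mu(\phi)>0$ uniformly in $i\in I$. Applying Fact \ref{fact_indmeasures} to $\nu$, $\psi$, and $(g_i)_{i\in I}$ then yields consistency of $\{g_i\cdot\phi:i\in I\}$, contradicting $k$-inconsistency. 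The only minor point requiring attention is the (automatic) preservation of indiscernibility and $k$-inconsistency under passing to a subsequence, so no step here looks like a real obstacle.
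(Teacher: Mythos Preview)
Your proof is correct and follows essentially the same route as the paper. The only cosmetic difference is in the organization of $(4)\Rightarrow(1)$: the paper applies Fact~\ref{fact_indmeasures} to each $g_i\cdot\mu$ to show that for fixed $i$ only finitely many $j$ can satisfy $g_i\cdot\mu=g_j\cdot\mu$, whereas you pigeonhole first to a constant subsequence and then apply Fact~\ref{fact_indmeasures} once---both amount to the same counting.
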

\begin{proof}
The equivalence of (1) and (2) is clear by compactness, (1) implies (3) is immediate by Lemma \ref{NonGDivFormulaG00Inv}, and (3) implies (4) as the size of the orbit of a $G^{00}$-invariant measure is bounded by $|G/G^{00}|$.

$(4) \Rightarrow (1)$: Assume that we have some $G$-dividing $\phi(x)$ with $\mu(\phi(x)) > \varepsilon > 0$. As in the proof of Proposition \ref{prop_Gdivide}, (3) $\Rightarrow$ (2) we can find an arbitrarily long indiscernible sequence $(g_i)_{i \in \lambda}$ with $g_i \in G^{00}$ such that $\{ g_i \phi(x) \}$ is $k$-inconsistent, for some fixed $k < \omega$. 

In view of Fact \ref{fact_indmeasures} for any fixed $i < \lambda$ there can be only finitely many $j < \lambda$ such that $g_i \mu (g_j \phi(x)) > \varepsilon$. But $g_i \mu(g_j \phi(x)) = g_j^{-1} g_i \mu(\phi(x))$. This implies that $g_i \mu \neq g_j \mu$ for all but finitely many $j < \lambda$, which then implies that the orbit of $\mu$ is unbounded.
\end{proof}




In \cite[Proposition 5.6]{NIP2} it is shown that one can lift the Haar measure on $G/G^{00}$ to a global $G$-invariant measure on all definable subsets of an NIP group $G$ using a strongly $f$-generic type. We point out that in a definably amenable NIP group, an $f$-generic type works just as well. For this we need a local version of the argument used there. 

Fix a small model $M$, and let $\mathcal{F}_{M}$ be the set of formulas of the form $g\cdot\phi\left(x\right)$ or $\neg g\cdot \phi(x)$, for $g\in G\left(\M\right),\phi\left(x\right)\in L_G\left(M\right)$.


\begin{prop} \label{prop: f-gen iff M-inv, locally}
Let $G$ be definably amenable, and let $p$ be a maximal finitely consistent set of formulas in $\mathcal{F}_{M}$.
Then $p$ is $f$-generic if and only if $g\cdot p$ is $M$-invariant
for every $g\in G$.\end{prop}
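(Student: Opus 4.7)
The plan is to treat the two implications separately, both relying on the characterization of non-$f$-genericity as $G$-dividing from Proposition \ref{prop_Gdivide}. A useful preliminary is that in a definably amenable NIP group one has $G^{\infty}=G^{00}$ by Fact \ref{fac: BasicDefAm}(2), so whenever $h_{1}\equiv_{M}h_{2}$ in $G$, Remark \ref{rem: G000 generators} gives $h_{2}^{-1}h_{1}\in G^{00}$.

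For the forward direction, assume $p$ is $f$-generic, fix $g\in G$, and take $h_{1}\equiv_{M}h_{2}$ in $G$ and $\psi\in L_{G}(M)$. A short computation with the left action yields
\[
(h_{1}\psi)\triangle(h_{2}\psi)\;=\;h_{2}\cdot\bigl((u\psi)\triangle\psi\bigr),\qquad u=h_{2}^{-1}h_{1}\in G^{00}.
\]
By Lemma \ref{NonGDivFormulaG00Inv} the formula $(u\psi)\triangle\psi$ is not $f$-generic, and hence neither is its $h_{2}$-translate, since $f$-genericity is translation-invariant. As $g\cdot p$ inherits $f$-genericity from $p$, this symmetric difference cannot belong to $g\cdot p$, and since $g\cdot p$ is a complete $\mathcal{F}_{M}$-type it must therefore contain the equivalence $(h_{1}\psi)\leftrightarrow(h_{2}\psi)$, which is exactly $M$-invariance of $g\cdot p$.

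For the converse, assume every $g\cdot p$ is $M$-invariant. Every formula in $p$ has the shape $k\psi$ with $k\in G$ and $\psi\in L_{G}(M)$, and is $f$-generic iff $\psi$ is (translation invariance), so it suffices to show that $\psi$ is $f$-generic whenever $k\psi\in p$. Pick a strongly $f$-generic $M$-invariant global type $q$ via Fact \ref{fac: BasicDefAm} and a realization $h\models q|_{M}$; by the implication $(3)\Rightarrow(1)$ of Proposition \ref{prop_Gdivide} it is enough to prove that $h^{-1}\psi$ does not divide over $M$. Suppose otherwise and pick an $M$-indiscernible sequence $(g_{i})_{i<\omega}$ in $G$ with $g_{0}=h^{-1}$ and $\{g_{i}\psi:i<\omega\}$ inconsistent. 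From $k\psi\in p$ we have $h^{-1}\psi\in(kh)^{-1}\cdot p$, and by hypothesis $(kh)^{-1}\cdot p$ is $M$-invariant; the indiscernibility of $(g_{i})$ forces $g_{i}\equiv_{M}h^{-1}$, whence $g_{i}\psi\in(kh)^{-1}\cdot p$ for every $i$. But then $\{g_{i}\psi:i<\omega\}$ is contained in a complete $\mathcal{F}_{M}$-type and so is consistent, a contradiction.

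The only real difficulty is bookkeeping: verifying the symmetric-difference identity in the forward direction, and on the converse side identifying $h^{-1}\psi$ as a member of the $\mathcal{F}_{M}$-type $(kh)^{-1}\cdot p$ so that $M$-invariance applies directly, avoiding any need to extend $p$ to a global $L(\M)$-type.
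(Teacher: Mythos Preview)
Your proof is correct and follows essentially the same approach as the paper's: both directions hinge on Lemma \ref{NonGDivFormulaG00Inv} for the forward implication and on trapping an inconsistent $M$-indiscernible family $\{g_i\psi\}$ inside an $M$-invariant translate of $p$ for the converse. Your converse takes a small detour through Proposition \ref{prop_Gdivide} and a strongly $f$-generic realization $h\models q|_M$, which is harmless but unnecessary --- the paper simply takes whatever $h_0$ witnesses that $h_0\psi$ divides over $M$ and derives the same contradiction with $M$-invariance of $h_0\cdot p$ directly, without ever introducing $q$ or $h$.
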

\begin{proof}
Notice that $g\cdot p(x)$ is also a set of formulas in $\mathcal F_M$. Assume that $g\cdot p\left(x\right)$ is not $M$-invariant.
Then $gp\vdash g_{0}\phi\left(x\right)\triangle g_{1}\phi\left(x\right)$
for some $\phi\left(x\right)\in L\left(M\right)$ and $g_{0}\equiv_{M}g_{1}$.
Hence $g_1^{-1} g p\vdash g_1^{-1} g_0 \phi(x) \Delta \phi(x)$ and $g_1^{-1} g_0 \in G^{00}$ (by Fact \ref{fac: BasicDefAm}(2)). Then $(g_1^{-1} g_0) \phi(x) \triangle \phi(x)$ is not $f$-generic by Lemma \ref{NonGDivFormulaG00Inv}, and so $p$ is not $f$-generic --- a contradiction.


Conversely, assume that some formula $\psi(x)$ implied by $p(x)$ is not f-generic. Let $N\supseteq M$ contain the parameters of $\psi$. Then there is some $(h_i)_{i<\omega}$ indiscernible over $N$ such that $\{h_i\psi(x)\}_{i<\omega}$ is $k$-inconsistent. Then $h_0\psi(x)\in h_0p$, but $h_i\psi(x)\notin h_0(p)$ for some $i<\omega$. So $h_0p$ is not $N$-invariant, and thus also not $M$-invariant.
\end{proof}
%
%

\begin{defi} \label{def: mu_p} Let $G = G(\M)$ be definably amenable, and let $p\in S_G(\monster)$ be f-generic. Keeping in mind that $p$ (as well as all its translates) is $G^{00}$-invariant (by Proposition \ref{prop:f-gen iff G00-inv}), we define a measure $\mu_p$ on $G$ by: 
$$\mu_p(\phi(x))=h_0(\{\bar{g}\in G/G^{00} : \phi(x)\in g\cdot p\}),$$ 
where $h_0$ is the normalized Haar measure on the compact group $G/G^{00}$ and $\bar g = g/G^{00}$. 
\end{defi}

We have to check that this definition makes sense, that is that the set we take the measure of is indeed measurable. Let $M$ be a small model over which $\phi(x)$ is defined. Let $p_M$ be the restriction of $p$ to formulas from $\mathcal{F}_M$ (as defined above). By Proposition \ref{prop: f-gen iff M-inv, locally}, $p_M$ is $M$-invariant. It follows that $p_M$ extends to some complete $M$-invariant type (by Remark \ref{rem: non-dividing extends to invariant}). Then we can use Borel-definability of invariant types (applied to the family of all translates of $\phi(x)$) exactly as in \cite[Proposition 5.6]{NIP2} to conclude.

\begin{rem}\label{rem: InvarianceOfMup}
\begin{enumerate}
\item The measure $\mu_p$ that we just constructed is clearly $G$-invariant and $G^{00}$-strongly invariant (that is, $\mu_p(\phi(x)\triangle g\cdot \phi(x))=0$ for $g\in G^{00}$). Besides $\mu_p = \mu_{gp}$ for any $g$, $p$.
\item We have $S(\mu_p) \subseteq \overline{G\cdot p}$. Indeed, if $q \in S(\mu_p)$ and $\phi(x) \in q$ arbitrary, then $\mu_p(\phi(x)) >0$, which by the definition of $\mu_p$ implies that $g \cdot p \vdash \phi(x)$ for some $g \in G$.
\end{enumerate}
\end{rem}

\begin{problem}\label{prob: f-gen ideal vs amenability}\footnote{We have claimed an affirmative answer in an earlier version of this article, however a mistake in our argument was pointed out by the referees.}
Let $G = G(\M)$ be an NIP group. Are the following two properties equivalent? 
\begin{enumerate}
\item $G$ is definably amenable.
\item $G$ admits a global $f$-generic type (equivalently, the family of all non-$f$-generic subsets of $G$ is an ideal).
\end{enumerate}

\end{problem}

\subsubsection{Approximation lemmas} \label{sec: approximation lemmas}
Throughout this section, $G=G(\M)$ is a definably amenable NIP group.
Given a $G^{00}$-invariant type $p(x) \in S_G(\M)$ and a formula $\phi(x) \in L_G(\M)$, let $A_{\phi,p} :=\{\bar g\in G/G^{00} : \phi(x)\in \bar g\cdot p\}$.

Note that $A_{g\cdot \phi,p} = \bar g \cdot A_{\phi,p}$ and $A_{\phi,g\cdot p}= A_{\phi,p}\cdot \bar g^{-1}$, where $\bar g$ is the image of $g$ in $G/G^{00}$.

\begin{lem}\label{lem: A_phi is VC}
For a fixed formula $\phi(x,y)$, let $\mathbf A_\phi \subseteq \mathfrak P(G/G^{00})$ be the family of all $A_{\phi(x,b),p}$ where $b$ varies over $\monster$ and $p$ varies over all f-generic types on $G$. Then $\mathbf A_\phi$ has finite VC-dimension.
\end{lem}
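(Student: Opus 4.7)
The plan is to realise $\mathbf A_\phi$ as the trace of an ordinary $\phi$-type family for an auxiliary formula in the NIP theory $T$, so that its finite VC-dimension is inherited directly from NIP of $T$. I would first unravel the definition: since $g \cdot p = \tp(g \cdot a/\monster)$ for any $a \models p$, we have $\phi(x,b) \in g \cdot p$ iff $\models \phi(ga,b)$, or equivalently $\phi(gx,b) \in p$. Thus
$$A_{\phi(x,b),p} \;=\; \bigl\{\bar g \in G/G^{00} : \phi(gx,b) \in p\bigr\} \;=\; \bigl\{\bar g \in G/G^{00} : \models \phi(ga,b)\bigr\}$$
for any realisation $a$ of $p$ (taken in a sufficiently saturated extension, which exists since $p$ is a complete type).

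Next I would introduce the auxiliary formula $\chi(z;u,v) := \phi(z \cdot u, v)$, viewed with $z$ as the free variable ranging over $G$ and $(u,v)$ as parameters. This is just a formula in $T$ (the group operation is $\emptyset$-definable), hence NIP, so the family
$$\mathcal D \;:=\; \bigl\{ \{g \in G : \models \phi(ga,b)\} : a,b \in \monster \bigr\}$$
of subsets of $G$ has finite VC-dimension, say $V$.

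Finally, suppose $\{\bar g_1,\ldots,\bar g_n\} \subseteq G/G^{00}$ is shattered by $\mathbf A_\phi$, and lift to representatives $g_1,\ldots,g_n \in G$. For each $S \subseteq \{1,\ldots,n\}$ pick a witness $(b_S,p_S)$ with $\bar g_i \in A_{\phi(x,b_S),p_S}$ iff $i \in S$, and let $a_S$ be any realisation of $p_S$. By the reformulation above, $g_i \in \{g : \models \phi(ga_S,b_S)\}$ iff $i \in S$, so $\{g_1,\ldots,g_n\}$ is shattered by $\mathcal D$, forcing $n \leq V$. Hence $\mathbf A_\phi$ has VC-dimension at most $V$.

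The main conceptual point — and, as far as I can see, the only non-trivial step — is noticing that the apparent extra parameter $p$ (which ranges over an unbounded set of global types) can, for the purposes of membership queries by $\phi(gx,b)$, be encoded by a single realisation $a \in \monster^{|x|}$; once this reduction is made, the problem collapses immediately to NIP of an ordinary formula in $T$. It is worth observing that the argument uses neither $f$-genericity nor $G^{00}$-invariance of $p$, so the same bound holds with $p$ ranging over all global types.
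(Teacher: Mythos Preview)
Your proof is correct and follows essentially the same route as the paper's: both lift representatives $g_i$ of the shattered cosets and reduce to the finite VC-dimension of the auxiliary formula $\psi(u;x,y)=\phi(u\cdot x,y)$ via NIP. The only cosmetic difference is that the paper realises each $p_I$ restricted to the finite set $\{g_0,\ldots,g_{n-1},b_I\}$ inside $\monster$ (so $a_I\in\monster$), whereas you pass to a larger model for $a_S$; your definition of $\mathcal D$ with ``$a,b\in\monster$'' then needs $a$ to range over that larger extension, but this is harmless since the VC-dimension of a formula is preserved in elementary extensions.
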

\begin{proof}
Let $\bar g_0, \ldots,  \bar g_{n-1}$ be shattered by $\mathbf A_\phi$. Then for any $I \subseteq n$ there is some $A_{\phi(x,b_I),p_I}$ which cuts out that subset. Take representatives $g_0, \ldots, g_{n-1} \in G$ of the $\bar g_i$'s. Let $a_I \models p_I |_{g_0, \ldots, g_{n-1} b_I}$, then we have $\phi(g_i a_I, b_I)$ if and only if $i \in I$. Hence the VC-dimension of $\mathbf A_\phi$ is at most that of $\psi(u; x,y) = \phi (u x,y)$, so finite by NIP.
\end{proof}

Replacing the formula $\phi(x;y)$ by $\phi'(x;y,u) :=\phi(u^{-1}\cdot x;y)$, we may  assume that any translate of an instance of $\phi$ is again an instance of $\phi$. Note also that then for any parameters $a,b$ we have $$\bar{g}_1 A_{\phi'(x;a,b), p} \bar{g}_2 = A_{g_1 \phi'(x;a,b), g_2^{-1} p} = A_{ \phi'(x;a',b'), g_2^{-1} p}$$ for some $a',b'$. Using this and applying Lemma \ref{lem: A_phi is VC} to $\phi'(x; y, u)$, we get the following corollary.

\begin{cor} \label{cor: g A_phi is VC}
For any $\phi(x,y)\in L_G(\M)$, the family  $$\mathcal{F}_\phi = \{\bar g_1 \cdot A_{\phi(x,b),p} \cdot \bar g_2 : \bar g_1, \bar g_2\in G/G^{00}, b \in \M, p \in S_G(\M) \textrm{ } f \textrm{-generic}\}$$ has finite VC-dimension. 
\end{cor}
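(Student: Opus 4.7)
The proof is essentially laid out by the discussion preceding the corollary, so my plan is to formalize that reduction. The key observation is that Lemma \ref{lem: A_phi is VC} already gives finite VC-dimension for the family $\mathbf{A}_\psi$ of sets of the form $A_{\psi(x,b),p}$ (with $b$ and $p$ varying) for any fixed formula $\psi$. The statement of the corollary differs only by (i) allowing two-sided translation by elements of $G/G^{00}$ and (ii) allowing the instance parameter to vary. Both of these features can be absorbed into the formula itself by adding extra parameter variables.

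First I would introduce $\phi'(x; y, u) := \phi(u^{-1} \cdot x; y)$. An instance $\phi'(x; a, b) = \phi(b^{-1} x; a)$ and its left-translate by $g$ satisfy
\[
g \cdot \phi'(x; a, b) \;=\; \phi'(g^{-1} x; a, b) \;=\; \phi\bigl((gb)^{-1} x; a\bigr) \;=\; \phi'(x; a, gb),
\]
so the class of instances of $\phi'$ is closed under $G$-translation. Consequently, using $A_{g \cdot \psi, p} = \bar g \cdot A_{\psi, p}$ and $A_{\psi, g \cdot p} = A_{\psi, p} \cdot \bar g^{-1}$ from the paragraph defining $A_{\phi,p}$, for any $\bar g_1, \bar g_2 \in G/G^{00}$ and any parameters $a, b$,
\[
\bar g_1 \cdot A_{\phi'(x;a,b),\, p} \cdot \bar g_2 \;=\; A_{g_1 \cdot \phi'(x;a,b),\, g_2^{-1} \cdot p} \;=\; A_{\phi'(x;\,a,\,g_1 b),\, g_2^{-1} \cdot p},
\]
which is again a member of the family $\mathbf{A}_{\phi'}$ from Lemma \ref{lem: A_phi is VC}, since $g_2^{-1} \cdot p$ remains $f$-generic (the $f$-generic types form a $G$-invariant class, either directly or via Proposition \ref{prop:f-gen iff G00-inv} since $G^{00}$-invariance is preserved under translation).

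Now every element of $\mathcal{F}_\phi$ has the form $\bar g_1 A_{\phi(x,b), p} \bar g_2$, and since $\phi(x,b) = \phi'(x; b, 1_G)$ is an instance of $\phi'$, the computation above identifies $\mathcal{F}_\phi$ as a subfamily of $\mathbf{A}_{\phi'}$. Lemma \ref{lem: A_phi is VC} applied to $\phi'(x; y, u)$ then yields finite VC-dimension.

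There is no real obstacle: the only thing to be careful about is bookkeeping of the left/right actions of $G/G^{00}$ on the sets $A_{\psi,p}$ via translation of the formula versus translation of the type, and checking that the $f$-generic type $p$ is transported to an $f$-generic type under the one-sided translation $p \mapsto g_2^{-1} \cdot p$. Both facts have already been established, so the corollary is a direct application of Lemma \ref{lem: A_phi is VC} to the auxiliary formula $\phi'$.
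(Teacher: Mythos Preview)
Your proposal is correct and follows essentially the same approach as the paper: introduce the auxiliary formula $\phi'(x;y,u)=\phi(u^{-1}x;y)$ so that left translates of instances are again instances, verify the identity $\bar g_1 A_{\phi'(x;a,b),p}\bar g_2 = A_{\phi'(x;a',b'),\,g_2^{-1}p}$, and then apply Lemma~\ref{lem: A_phi is VC} to $\phi'$. Your explicit remark that $g_2^{-1}\cdot p$ remains $f$-generic is a detail the paper leaves implicit but is indeed needed for the inclusion $\mathcal{F}_\phi\subseteq\mathbf A_{\phi'}$.
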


We would now like to apply the VC-theorem to $\mathcal{F}_\phi$. This requires verifying an additional technical hypothesis (assumptions (2) and (3) in Fact \ref{VC-theorem}), which we are only able to show for certain (sufficiently representative) subfamilies of $\mathcal{F}_\phi$.

Fix $\phi(x) \in L_G(\M)$ and let $S$ be a set of global $f$-generic types. Let 
$$\mathcal{F}_{\phi, S} := \left\{ \bar g_1 \cdot A_{\phi(x), p} \cdot \bar g_2 : \bar g_1,\bar g_2 \in G/G^{00}, p \in S  \right\}.$$

\begin{lem} \label{lem_measurability}
If $S$ is countable and $L$ is countable, then  $\mathcal{F}_{\phi, S}$ satisfies all of the assumptions of Fact \ref{VC-theorem} with respect to the measure $h_0$.
\end{lem}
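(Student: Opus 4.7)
The plan is to verify in turn the three measurability conditions of Fact \ref{VC-theorem} applied to $\mathcal{F}_{\phi,S}$ and the measure $h_0$, exploiting two structural facts: first, that $G/G^{00}$ is a compact Polish group when $L$ is countable (Remark \ref{rem: G/G^{00} is Polish}), and second, that the countability of $S$ will allow us to reduce an \emph{a priori} uncountable supremum to a sup over a Polish parameter space plus a countable outer sup.

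First, for condition (1), I would invoke the Borel-definability argument already used in the discussion following Definition \ref{def: mu_p}: for each $f$-generic $p\in S$, applied to $\psi(x,y)=\phi(y\cdot x)$, the set $\{g\in G(\M):\phi(g\cdot x)\in p\}$ is a finite Boolean combination of $M$-type-definable sets, and since it is $G^{00}$-saturated (as $G^{00}=\Stab(p)$), the quotient $A_{\phi,p}\subseteq G/G^{00}$ is a finite Boolean combination of closed sets in the logic topology, hence Borel. Left and right translations by elements of $G/G^{00}$ are self-homeomorphisms of this Polish group, so every $\bar g_1\cdot A_{\phi,p}\cdot \bar g_2\in \mathcal{F}_{\phi,S}$ is Borel and thus $h_0$-measurable.

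For condition (2), by bi-invariance of $h_0$ one has $h_0(\bar g_1 A_{\phi,p}\bar g_2)=h_0(A_{\phi,p})$, so
\[
f_n(\bar h_1,\dots,\bar h_n)\;=\;\sup_{p\in S}\;\sup_{(\bar g_1,\bar g_2)\in (G/G^{00})^2}\Bigl|\tfrac{1}{n}\sum_{i=1}^{n}\mathbf 1_{A_{\phi,p}}(\bar g_1^{-1}\bar h_i\bar g_2^{-1})-h_0(A_{\phi,p})\Bigr|.
\]
For fixed $p$, the expression inside the absolute value is Borel on $(G/G^{00})^{n+2}$, being the composition of the Borel indicator $\mathbf 1_{A_{\phi,p}}$ with the continuous group operations. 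The inner supremum over the compact Polish space $(G/G^{00})^2$ gives, for each threshold $\alpha$, a set of the form $\{\bar h:\sup>\alpha\}$ that is the projection to $(G/G^{00})^n$ of a Borel subset of $(G/G^{00})^{n+2}$, hence analytic. Analytic subsets of a Polish space are universally measurable, so each such set lies in the completion of $h_0^{\otimes n}$, which is what we need since the paper has arranged $h_0$ to be complete. Finally, as $S$ is countable, the outer $\sup_{p\in S}$ preserves measurability. Condition (3) is proved by exactly the same argument, with $h_0(A_{\phi,p})$ replaced by the Borel function $\tfrac{1}{n}\sum_i \mathbf 1_{A_{\phi,p}}(\bar g_1^{-1}\bar y_i\bar g_2^{-1})$, keeping everything Borel in the combined $(\bar g_1,\bar g_2,\bar h,\bar y)$ variables before the analytic projection step.

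The only genuine obstacle is the sup over the uncountable parameters $\bar g_1,\bar g_2$; it is resolved by using the projection theorem and universal measurability of analytic sets. Polishness of $G/G^{00}$ (which requires $L$ countable) is precisely what makes these classical descriptive set-theoretic facts apply, and countability of $S$ is what keeps the outer sup within the measurable functions.
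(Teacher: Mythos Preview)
Your proposal is correct and follows essentially the same route as the paper: reduce the outer sup to a single $p$ via countability of $S$, use bi-invariance of $h_0$ to strip the translation from the measure term, and handle the uncountable sup over $(\bar g_1,\bar g_2)$ by writing the relevant sets as projections of Borel subsets of a power of the Polish group $G/G^{00}$, hence analytic and universally measurable (the paper does this last step by reducing to the finitely many sets $A_I=\{(\bar x_0,\dots,\bar x_{n-1}):\exists\,\bar g_1,\bar g_2\ (\bar x_i\in \bar g_1 A\bar g_2\iff i\in I)\}$, which is cosmetically different but the same idea). One small omission: the hypotheses of Fact~\ref{VC-theorem} also include finite VC-dimension, which the paper notes follows from Corollary~\ref{cor: g A_phi is VC}; you should mention this as well.
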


\begin{proof}
First of all, the family of sets $\mathcal{F}_{\phi, S}$ has finite VC-dimension by Corollary \ref{cor: g A_phi is VC} and the obvious inclusion $\mathcal{F}_{\phi, S} \subseteq \mathcal{F}_\phi$.

Next, (1) is satisfied by the assumption that $S$ consists of $f$-generic types and an argument as in the discussion after Definition \ref{def: mu_p} (using countability of the language).

For a set $S'$ of global $f$-generic types, let 
$$f_{S', n}(x_0, \ldots, x_{n-1}) := \sup_{Y \in \mathcal{F}_{\phi,S'}} \{ |\Av(x_0, \ldots, x_{n-1};Y) - h_0(Y)| \} \textrm{,}$$
$$g_{S',n}(x_0, \ldots, x_{n-1}, y_0, \ldots, y_{n-1}) := \sup_{Y \in \mathcal{F}_{\phi,S'}} \{ |\Av(x_0, \ldots, x_{n-1};Y) - $$
$$ - \Av(y_0, \ldots, y_{n-1};Y)| \} \textrm{.}$$

For (2) and (3) we need to show that $f_{S,n}$ and $g_{S,n}$ are measurable for all $n < \omega$. Note that $f_{S,n} = \sup_{p \in S} f_{\{ p\}, n}$ and $g_{S,n} = \sup_{p \in S} g_{\{ p \}, n}$. Since $S$ is countable, it is enough to show that for a fixed $f$-generic type $p$ the functions $f_n := f_{\{p\}, n}$ and $g_n := g_{\{p\}, n}$ are measurable. 

Let $A = A_{\phi,p}$. By $G/G^{00}$-invariance of $h_0$ both on the left and on the right, we have:
$$f_n(x_0,\ldots,x_{n-1})=\max_{\bar g_1, \bar g_2 \in G/G^{00}} |\Av(x_0,\ldots,x_{n-1};\bar g_1 \cdot A \cdot \bar g_2) -h_0(A)|$$ and $$g_n(x_0,\ldots,x_{n-1},y_0,\ldots,y_{n-1})=\max_{\bar g_1, \bar g_2 \in G/G^{00}} |\Av(x_0,\ldots,x_{n-1};\bar g_1 \cdot A \cdot \bar g_2)-$$
$$-\Av(y_0,\ldots,y_{n-1};\bar g_1\cdot A \cdot \bar g_2)|\textrm{.}$$

Then it is enough to show that for a fixed $I\subseteq n$, the set 
$$A_I=\{(x_0,\ldots,x_{n-1}) \in (G/G^{00})^n :  \textrm{ for some } \bar g_1, \bar g_2 \in G/G^{00},$$
$$ x_i\in \bar g_1 \cdot A \cdot \bar g_2 \iff i\in I\}$$
 is measurable. But we can write $A_I$ as the projection of $A'_I\subseteq (G/G^{00})^{n+2}$ where $A'_I$ is the intersection of $\{(\bar g_1, \bar g_2,x_0,\ldots,x_{n-1}) : \bar g_1^{-1}x_i \bar g_2^{-1} \in A\}$ for $i\in I$ and $\{(\bar g_1, \bar g_2 ,x_0,\ldots,x_{n-1}) : \bar g_1^{-1}x_i \bar g_2^{-1} \notin A\}$ for $i\notin I$. As group multiplication is continuous and $A$ is Borel, those sets are Borel as well. Hence $A_I$ is analytic. Now $G/G^{00}$ is a Polish space (as $L$ is countable, by Remark \ref{rem: G/G^{00} is Polish}) and analytic subsets of Polish spaces are universally measurable (see e.g. \cite[Theorem 29(7)]{Kechris1995}). In particular they are measurable with respect to the complete Haar measure $h_0$.
\end{proof}


The next lemma will allow us to reduce to a countable sublanguage.
\begin{lem}\label{lem: reducing measures to countable sublanguage}
Let $L_0$ be a sublanguage of $L$, $T_0$ the $L_0$-reduct of $T$, $G$ an $L_0$-definable group definably amenable (in the sense of $T$) and $\phi(x)$ a formula from $L_0(\M)$. Let $p \in S_G(\M)$ be a global $L$-type which is $f$-generic, and let $p_0 = p|_{L_0}$.
\begin{enumerate}
\item In the sense of $T_0$, the group $G$ is definably amenable NIP and $p_0$ is an $f$-generic type.
\item Let $G^{00}_{L_0}$ be the connected component computed in $T_0$, and let $\mu_{p_0}$ ($\mu_p$) be the $G$-invariant measure on $L_0$-definable (resp., $L$-definable) subsets of $G$ given by Definition \ref{def: mu_p} in $T_0$ (resp., in $T$). Then $\mu_p(\phi(x)) = \mu_{p_0}(\phi(x))$.
\end{enumerate}
\end{lem}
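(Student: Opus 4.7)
The plan is as follows. For part (1), I would invoke the bounded-orbit characterization from Theorem \ref{thm: bdd orbit implies def amenable}. First, NIP transfers to reducts, so $T_0$ is NIP. Next, $p$ being $f$-generic in $T$ implies, by Proposition \ref{prop:f-gen iff G00-inv}, that its $G$-orbit has size at most $|G/G^{00}|\leq 2^{|T|}$. The restriction map $q\mapsto q|_{L_0}$ is $G$-equivariant, so $|G\cdot p_0|\leq|G\cdot p|$ is bounded as well. Theorem \ref{thm: bdd orbit implies def amenable} applied in $T_0$ then yields definable amenability of $G$ in the sense of $T_0$, and Proposition \ref{prop:f-gen iff G00-inv} in $T_0$ (whose implication $(3)\Rightarrow(1)$ needs definable amenability) gives that $p_0$ is $f$-generic in $T_0$.

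For part (2), the key observation is the containment $G^{00}\subseteq G^{00}_{L_0}$: the group $G^{00}_{L_0}$ is $L_0(\M)$-type-definable, hence $L(\M)$-type-definable, and of bounded index, so it contains $G^{00}$ by minimality of the latter. This yields a continuous surjective homomorphism of compact topological groups $\pi:G/G^{00}\to G/G^{00}_{L_0}$. Set $A_{\phi,p} := \{\bar g\in G/G^{00}:\phi(x)\in g\cdot p\}$ and analogously $A_{\phi,p_0}\subseteq G/G^{00}_{L_0}$, as in the construction immediately following Definition \ref{def: mu_p}. Since $\phi\in L_0(\M)$, for every $g\in G$ we have
\[
\phi(x)\in g\cdot p \;\Leftrightarrow\; g^{-1}\phi(x)\in p \;\Leftrightarrow\; g^{-1}\phi(x)\in p_0 \;\Leftrightarrow\; \phi(x)\in g\cdot p_0,
\]
so $A_{\phi,p}=\pi^{-1}(A_{\phi,p_0})$. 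Denoting by $h_0$ (resp. $h_0^{L_0}$) the normalized Haar measure on $G/G^{00}$ (resp. on $G/G^{00}_{L_0}$) from Definition \ref{def: mu_p} applied in $T$ (resp. in $T_0$), I would then compute
\[
\mu_p(\phi(x))=h_0(A_{\phi,p})=h_0\bigl(\pi^{-1}(A_{\phi,p_0})\bigr)=(\pi_*h_0)(A_{\phi,p_0}).
\]
Because $\pi$ is a continuous surjective group homomorphism and $h_0$ is left $G/G^{00}$-invariant, $\pi_*h_0$ is a left $G/G^{00}_{L_0}$-invariant Borel probability measure on the compact group $G/G^{00}_{L_0}$. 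By uniqueness of Haar measure, $\pi_*h_0=h_0^{L_0}$, whence $\mu_p(\phi(x))=\mu_{p_0}(\phi(x))$.

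No serious obstacle is anticipated. Part (1) is a direct application of the black boxes \ref{thm: bdd orbit implies def amenable} and \ref{prop:f-gen iff G00-inv} carried out inside $T_0$; the only care needed is the straightforward observation that NIP and boundedness of an orbit transfer under reducts. For part (2), the minor technical point is that $A_{\phi,p_0}$ must be Borel in $G/G^{00}_{L_0}$ for the pushforward computation to make sense, but this is verified exactly as in the paragraph following Definition \ref{def: mu_p}, applied within $T_0$.
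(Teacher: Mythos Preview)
Your argument is correct. Part (2) is essentially identical to the paper's proof: both identify $A_{\phi,p}$ as the preimage of $A_{\phi,p_0}$ under the quotient map $G/G^{00}\to G/G^{00}_{L_0}$ and then invoke uniqueness of Haar measure for the pushforward.

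Part (1) differs. The paper observes directly that definable amenability transfers to $T_0$ by restricting a $G$-invariant measure to $L_0$-definable sets (the ``clear'' first assertion), and then transfers $f$-genericity formula-by-formula via $G$-dividing: if $\psi(x)\in L_0$ $G$-divides in $T_0$, one extracts an $L$-indiscernible subsequence from the witnessing $L_0$-indiscernible sequence to see that $\psi(x)$ $G$-divides in $T$; since no formula in $p$ $G$-divides in $T$, no formula in $p_0$ $G$-divides in $T_0$, and Proposition~\ref{prop_Gdivide} (in $T_0$) finishes. Your route instead pushes the bounded orbit of $p$ down to $p_0$ via the $G$-equivariant restriction map, and then invokes the heavier Theorem~\ref{thm: bdd orbit implies def amenable} and Proposition~\ref{prop:f-gen iff G00-inv} inside $T_0$. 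Both are short and valid; the paper's approach is more elementary (it avoids the machinery behind Theorem~\ref{thm: bdd orbit implies def amenable}), while yours has the virtue of uniformity---it treats definable amenability and $f$-genericity of $p_0$ with a single idea. One minor remark: your bound $|G\cdot p_0|\le 2^{|T|}$ need not be $\le 2^{|T_0|}$, so you are really using condition (3) of Theorem~\ref{thm: bdd orbit implies def amenable} in $T_0$ (bounded orbit of a measure) rather than condition (2); this is harmless since $2^{|T|}<\kappa$.
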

\begin{proof}
(1) The first assertion is clear. Similarly, it is easy to see that if $\psi(x) \in L_0$ is $G$-dividing in $T_0$, then it is $G$-dividing in $T$ (by extracting an $L$-indiscernible sequence from an $L_0$ indiscernible sequence). Then $p_0$ is $f$-generic by Fact \ref{prop_Gdivide} applied in $T_0$.

(2) Let $A = \{ \bar{g} \in G/G^{00} : g\cdot p \vdash \phi(x) \}$ and $A_0 = \{ \bar{g} \in G/G^{00}_{L_0} : g\cdot p_0 \vdash \phi(x) \}$, then by definition $\mu_p(\phi(x)) = h_0(A)$ and $\mu_{p_0}(\phi(x)) = h'_0(A_0)$, where $h_0$ is the Haar measure on $G/G^{00}$ and $h'_0$ is the Haar measure on $G/G^{00}_{L_0}$. The map $f : G/G^{00} \to G/G^{00}_{L_0}, g/G^{00} \mapsto g/G^{00}_{L_0}$ is a surjective group homomorphism, and it is continuous with respect to the logic topology. Note that for any $g\in G$ we have $g\cdot p_0 \vdash \phi(x) \iff g\cdot p \vdash \phi(x)$, so $A = f^{-1}(A_0)$. Let $h^*_0 = f_*(h_0)$ be the push-forward measure, it is an invariant measure on $G/G^{00}_{L_0}$. But by the uniqueness of the Haar measure, it follows that $h^*_0 = h'_0$, and so $h_0(A) = h^*_0(A_0) = h'_0(A_0)$, i.e., $\mu_p(\phi(x)) = \mu_{p_0}(\phi(x))$ as wanted.
\end{proof}


\begin{prop} \label{prop: countable approximation}
For any $\phi(x) \in L_G(\M)$, $\varepsilon>0$ and a \emph{countable} set of f-generic types $S \subseteq S_G(\M)$  there are some $g_0, \ldots, g_{n-1} \in G$ such that: for any $g, g' \in G$ and $p \in S$ we have $\mu_{gp}(g' \phi(x)) \approx^{\varepsilon} \Av(g_j g' \phi(x) \in gp)$.
\end{prop}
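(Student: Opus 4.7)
The plan is to apply the Vapnik--Chervonenkis theorem (Fact~\ref{VC-theorem}) to the family $\mathcal{F}_{\phi,S}$ on the probability space $(G/G^{00}, h_0)$. First I would set up the correspondence between the two sides of the claimed inequality and a single set in $\mathcal{F}_{\phi,S}$: unwinding the definitions of the left $G$-actions on formulas and on types, one computes
$$A_{g'\phi,\, gp} \;=\; \bar{g'} \cdot A_{\phi,p} \cdot \bar g^{-1}\textrm{,}$$
so that $\mu_{gp}(g'\phi) = h_0(C_{g,g',p})$ with $C_{g,g',p} := \bar{g'} \cdot A_{\phi,p} \cdot \bar g^{-1} \in \mathcal{F}_{\phi,S}$. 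A parallel manipulation shows that the membership $g_j g' \phi \in gp$ is equivalent to $\bar{g_j}^{-1} \in C_{g,g',p}$. Hence both the measure and the average in the statement are computed from the single set $C_{g,g',p}$.

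Before invoking Fact~\ref{VC-theorem} I would reduce to countable $L$, since this is needed for the measurability hypotheses to hold. Pick a countable sublanguage $L_0 \subseteq L$ containing the group operation and the formula $\phi$. By Lemma~\ref{lem: reducing measures to countable sublanguage}(1) each $p \in S$ restricts to an $L_0$-type $p_0$ which is $f$-generic for the reduct $T_0$, and Lemma~\ref{lem: reducing measures to countable sublanguage}(2) applied to the $L_0$-formula $g'\phi$ gives $\mu_{gp}(g'\phi) = \mu_{gp_0}(g'\phi)$; the discrete condition $g_j g' \phi \in gp$ obviously depends only on the $L_0$-reduct. So we may harmlessly assume $L$ to be countable.

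Under these assumptions Corollary~\ref{cor: g A_phi is VC} bounds the VC-dimension of $\mathcal{F}_{\phi,S}$ and Lemma~\ref{lem_measurability} (which uses countability of both $S$ and $L$) verifies the measurability hypotheses (1)--(3) of Fact~\ref{VC-theorem} relative to $h_0$. The VC theorem then provides an integer $m$ and elements $\bar h_0, \ldots, \bar h_{m-1} \in G/G^{00}$ such that $|\Av(j : \bar h_j \in C) - h_0(C)| \le \varepsilon$ for every $C \in \mathcal{F}_{\phi,S}$. Choosing $g_j \in G$ to be any lift of $\bar h_j^{-1}$, specialising this uniform bound to $C = C_{g,g',p}$ yields exactly the inequality $|\mu_{gp}(g'\phi) - \Av(j : g_j g'\phi \in gp)| \le \varepsilon$ for all $g,g' \in G$ and $p \in S$.

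The only substantive obstacle is already isolated in Lemma~\ref{lem_measurability}: the measurability of the sup-functions that appear in hypotheses (2) and (3) of the VC theorem. This is the point at which both the countability of $S$ and the countability of $L$ (needed so that $G/G^{00}$ is Polish and analytic sets are universally measurable with respect to $h_0$) are essential. Everything else is a direct identification of the two quantities in the statement with a single element of $\mathcal{F}_{\phi,S}$, followed by a standard application of the VC theorem.
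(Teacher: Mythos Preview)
Your proposal is correct and follows essentially the same route as the paper: identify both sides of the desired estimate with $h_0$ and the empirical average on the single set $A_{g'\phi,gp}=\bar{g'}A_{\phi,p}\bar g^{-1}\in\mathcal{F}_{\phi,S}$, invoke Lemma~\ref{lem_measurability} and Fact~\ref{VC-theorem}, and reduce to countable $L$ via Lemma~\ref{lem: reducing measures to countable sublanguage}. The only cosmetic differences are that you perform the countable-language reduction before rather than after the VC argument, and you are slightly more explicit about taking $g_j$ to lift $\bar h_j^{-1}$ so that the conclusion matches the statement without a silent relabeling.
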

\begin{proof}
First assume that the language $L$ is countable.
Using Lemma \ref{lem_measurability}, we can apply the VC-theorem (Fact \ref{VC-theorem}) to the family $\mathcal{F}' = \mathcal{F}_{\phi, S}$ and find some $\overline{g}_0, \ldots, \overline{g}_{n-1} \in G/G^{00}$ such that for any $Y \in \mathcal{F}'$ we have $\Av(\overline{g}_0, \ldots, \overline{g}_{n-1}; Y) \approx^{\varepsilon} h_0(Y)$. Let $g_i \in G$ be some representative of $\overline{g}_i$, for $i<n$. Let $g, g' \in G$ and $p \in S$ be arbitrary. Recall that $\mu_{gp}(g' \phi(x)) = h_0(A_{g' \phi,gp})$ and that $A_{g'\phi, gp} = \overline{g'} A_{\phi, p}\overline{g}^{-1}$, where $\overline{g} = g/G^{00}, \overline{g}' = g'/G^{00}$. Then $A_{g'\phi, g p} \in \mathcal{F}'$ and we have $\mu_{gp}(g' \phi(x)) \approx^{\varepsilon} \Av (\overline{g}_0, \ldots, \overline{g}_{n-1}; A_{g' \phi, g p}) = \Av(g^{-1}_0 g' \phi(x), \ldots, g^{-1}_{n-1} g' \phi(x); g p)$.

Now let $L$ be an arbitrary language, and let $L_0$ be an arbitrary countable sublanguage such that $\phi(x) \in L_0$ and $G$ is $L_0$-definable, let $T_0$ be the corresponding reduct. Let $S_0 = \{ p|_{L_0} : p \in S \}$, by Lemma \ref{lem: reducing measures to countable sublanguage} it is a countable set of $f$-generic types in the sense of $T_0$. Applying the countable case with respect to $S_0$ inside $T_0$, we find some $g_0, \ldots, g_{n-1} \in G$ such that for any $g,g' \in G$ and $p_0 \in S_0$ we have $\mu_{gp_0}(g' \phi(x)) \approx^{\varepsilon} \Av(g_j g' \phi(x) \in g p_0)$. Let $p \in S$ be arbitrary, and take $p_0 = p|_{L_0}$. On the one hand, the right hand side is equal to $\Av(g_j g' \phi(x) \in gp)$. On the other hand, as $g'\phi(x) \in L_0(\M)$ and $gp_0 = gp|_{L_0}$ is $f$-generic,  by Lemma \ref{lem: reducing measures to countable sublanguage} the left hand side is equal to $\mu_{gp}(g' \phi(x))$, as wanted.
\end{proof}

\begin{prop}\label{OrbitPreservesMuP}
Let $p$ be an f-generic type, and assume that $q \in \overline{G\cdot p}$. Then $q$ is f-generic and $\mu_p = \mu_q$.
\end{prop}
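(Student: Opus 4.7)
The plan is to prove the two assertions in turn, with the bulk of the work going into the equality $\mu_p = \mu_q$, where the main tool will be the uniform approximation given by Proposition \ref{prop: countable approximation}.

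First I will verify that $q$ is $f$-generic. The property of being $f$-generic for a formula $\phi(x)$ is translation-invariant: by Definition \ref{def: f-gen}, $\phi(x)$ is $f$-generic iff no $G$-translate forks, and the $G$-translates of $g\cdot\phi(x)$ are the same as those of $\phi(x)$. Now pick any $\phi(x)\in q$. Since $q\in\overline{G\cdot p}$, the open set $[\phi]\subseteq S_G(\M)$ meets $G\cdot p$, so $\phi(x)\in g\cdot p$ for some $g\in G$. Since $p$ is $f$-generic, so is $g\cdot p$ (every formula in it is a translate of a formula in $p$), hence $\phi(x)$ is $f$-generic. Therefore every formula in $q$ is $f$-generic, so $q$ itself is $f$-generic, and the measure $\mu_q$ is defined.

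Next I will prove $\mu_p(\phi(x)) = \mu_q(\phi(x))$ for every $\phi(x)\in L_G(\M)$. Fix such $\phi(x)$ and $\varepsilon>0$. Applying Proposition \ref{prop: countable approximation} to the countable set of $f$-generic types $S = \{p,q\}$, we obtain $g_0,\ldots,g_{m-1}\in G$ such that for every $g,g'\in G$ and every $r\in\{p,q\}$,
\[
\mu_{g\cdot r}(g'\cdot\phi(x)) \approx^{\varepsilon} \Av\bigl(g_j\cdot g'\cdot\phi(x)\in g\cdot r\bigr).
\]
Taking $g=g'=e$ we obtain
\[
\mu_p(\phi(x))\approx^{\varepsilon}\Av(g_j\cdot\phi(x)\in p),\qquad \mu_q(\phi(x))\approx^{\varepsilon}\Av(g_j\cdot\phi(x)\in q).
\]
Since $q\in\overline{G\cdot p}$, and the finite boolean combinations of the formulas $g_j\cdot\phi(x)$ form a finite family of clopen conditions on $q$, we may choose $h\in G$ such that $g_j\cdot\phi(x)\in h\cdot p$ iff $g_j\cdot\phi(x)\in q$ for every $j<m$. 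Hence $\Av(g_j\cdot\phi(x)\in q) = \Av(g_j\cdot\phi(x)\in h\cdot p)$. Applying the approximation with $g=h$, $g'=e$, $r=p$ gives $\mu_{h\cdot p}(\phi(x)) \approx^\varepsilon \Av(g_j\cdot\phi(x)\in h\cdot p)$; and by Remark \ref{rem: InvarianceOfMup}(1) we have $\mu_{h\cdot p} = \mu_p$. Combining,
\[
\mu_p(\phi(x))\approx^{\varepsilon}\Av(g_j\cdot\phi(x)\in h\cdot p)=\Av(g_j\cdot\phi(x)\in q)\approx^{\varepsilon}\mu_q(\phi(x)),
\]
so $|\mu_p(\phi(x))-\mu_q(\phi(x))|\leq 2\varepsilon$. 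Letting $\varepsilon\to 0$ yields $\mu_p(\phi(x))=\mu_q(\phi(x))$, and since $\phi$ was arbitrary, $\mu_p=\mu_q$.

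There is no serious obstacle: the argument is essentially a direct application of the uniform VC approximation of Proposition \ref{prop: countable approximation} together with the translation invariance $\mu_{h\cdot p}=\mu_p$. The only point requiring a line of justification is the choice of $h$ approximating $q$ on the finitely many translates $g_j\cdot\phi(x)$, which is immediate from $q\in\overline{G\cdot p}$.
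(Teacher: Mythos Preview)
Your proof is correct and follows essentially the same approach as the paper: both apply Proposition \ref{prop: countable approximation} with $S=\{p,q\}$, pick a translate $h\cdot p$ agreeing with $q$ on the finitely many $g_j\cdot\phi(x)$, and use $\mu_{h\cdot p}=\mu_p$ to conclude $|\mu_p(\phi)-\mu_q(\phi)|\leq 2\varepsilon$. Your treatment of the first part (that $q$ is $f$-generic) just unpacks the paper's one-line remark that the orbit consists of $f$-generic types and the set of $f$-generic types is closed; note that your displayed approximation $\mu_p(\phi)\approx^\varepsilon\Av(g_j\phi\in p)$ is never actually used in your final chain and could be omitted.
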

\begin{proof}
First of all, $q$ is f-generic because the orbit of $p$ consists of f-generic types and the set of f-generic types is closed.

Take a formula $\phi(x)\in L_G(\monster)$ and $\varepsilon > 0$, and let $g_0, \ldots, g_{n-1}$ be as given by Proposition \ref{prop: countable approximation} for $S = \{p,q\}$. 
Then we have $\mu_q(\phi(x)) \approx^{\varepsilon} \Av(g_i \phi(x) ; q)$.
As $q \in \overline{G\cdot p}$, there is some $g \in G$ such that for each $i<n$ we have $g_i \phi(x) \in q \iff g_i \phi(x) \in g p$. But we also have $\mu_{g p}(\phi(x)) \approx^{\varepsilon} \Av(g_i \phi(x); gp)$, which together with $\mu_{gp} = \mu_{p}$ implies $\mu_{p} (\phi(x)) \approx^{2 \varepsilon} \mu_q(\phi(x))$. As $\phi(x)$ and $\varepsilon$ were arbitrary, we conclude.
\end{proof}

\begin{prop}\label{prop_locallygeneric}
Let $p$ be an f-generic type. Then for any definable set $\phi(x)$, if $\mu_p(\phi(x)) > 0$, then there is a finite union of translates of $\phi(x)$ which covers the support $S(\mu_p)$ (so in particular has $\mu_p$-measure $1$).
\end{prop}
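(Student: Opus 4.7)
The plan is to derive this from Proposition~\ref{prop: countable approximation} applied to the singleton $S = \{p\}$, exploiting that $\mu_{gp} = \mu_p$ for all $g \in G$ (Remark~\ref{rem: InvarianceOfMup}(1)) to convert positive measure of $\phi(x)$ into a uniform lower bound, over \emph{all} $g \in G$ simultaneously, for the fraction of translates $g_j \phi(x)$ lying in $gp$.

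Concretely, set $\alpha := \mu_p(\phi(x)) > 0$ and apply Proposition~\ref{prop: countable approximation} with $\varepsilon := \alpha/2$ and the countable set $S = \{p\}$ to obtain elements $g_0, \ldots, g_{m-1} \in G$ such that
$$ \mu_{gp}(g' \phi(x)) \approx^{\varepsilon} \Av_{j < m}(g_j g' \phi(x) \in gp) $$
for all $g, g' \in G$. Specializing to $g' = e$, the left-hand side equals $\mu_p(\phi(x)) = \alpha$ by Remark~\ref{rem: InvarianceOfMup}(1), so for every $g \in G$ the average on the right is at least $\alpha - \varepsilon = \alpha/2 > 0$. In particular, for each $g \in G$ there is some $j < m$ with $g_j \phi(x) \in gp$, i.e., every translate $gp$ satisfies the disjunction $\bigvee_{j < m} g_j \phi(x)$.

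The final step is to pass from the $G$-orbit to the support. The set $\bigl[\bigvee_{j<m} g_j \phi(x)\bigr]$ is clopen in $S_G(\monster)$ (being a finite union of clopens) and contains $G \cdot p$, hence also contains $\overline{G \cdot p}$; by Remark~\ref{rem: InvarianceOfMup}(2), $S(\mu_p) \subseteq \overline{G \cdot p}$. Therefore every $q \in S(\mu_p)$ contains some $g_j \phi(x)$, which is exactly to say that the finite union $\bigcup_{j<m} g_j \phi(x)$ of translates covers $S(\mu_p)$.

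I do not anticipate any real obstacle: the approximation lemma does all of the combinatorial work, and the only substantive observation beyond bookkeeping is that $\mu_{gp} = \mu_p$ lets one read the approximation with $g$ varying rather than fixed, yielding a single finite set of translates $g_0, \ldots, g_{m-1}$ that simultaneously hits every type in the orbit of $p$ (and hence, by clopenness, every type in its closure).
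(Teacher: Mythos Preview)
Your proof is correct, but it takes a different route from the paper's. The paper argues pointwise on $S(\mu_p)$: for each $q \in S(\mu_p) \subseteq \overline{G\cdot p}$, Proposition~\ref{OrbitPreservesMuP} gives $\mu_q = \mu_p$, so $\mu_q(\phi(x)) > 0$ and some translate of $\phi(x)$ lies in $q$; compactness of $S(\mu_p)$ then yields a finite subcover. Your argument instead applies Proposition~\ref{prop: countable approximation} directly to $S = \{p\}$, obtaining a single finite list $g_0,\ldots,g_{m-1}$ that works uniformly for every $gp$ in the orbit, and then passes to the closure via clopenness. Both proofs rest on the same approximation lemma (the paper uses it through Proposition~\ref{OrbitPreservesMuP}), but yours avoids that intermediate proposition and the compactness step, and in exchange gives an effective bound on the number of translates in terms of the VC data. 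The paper's route has the advantage of isolating the fact $\mu_q = \mu_p$ for $q \in S(\mu_p)$, which is of independent interest.
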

\begin{proof}
As $S(\mu_p) \subseteq \overline{G\cdot p}$ (Remark \ref{rem: InvarianceOfMup}), any type $q$ weakly random for $\mu_p$ is f-generic and satisfies $\mu_q = \mu_p$ by Proposition \ref{OrbitPreservesMuP}. Hence $\mu_q(\phi(x))>0$, so some translate of $\phi(x)$ must be in $q$. It follows that the closed compact set $S(\mu_p)$ can be covered by translates of $\phi$, so by finitely many of them.
\end{proof}


\begin{lem}\label{lem_ApproxInvMeasByMuP}
Let $\mu$ be $G$-invariant. Then for any $\varepsilon >0$ and $\phi(x,y)$, there are some f-generic $p_0, \ldots, p_{n-1} \in S(\mu)$ such that 
$$\mu(\phi(x,b)) \approx^{\varepsilon} \frac 1 n \sum_{i<n} \mu_{p_i}(\phi(x,b))$$ for any $b \in \M$.
\end{lem}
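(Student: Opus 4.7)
The plan is to combine the standard approximation of a Keisler measure by an average of types in its support with integration against the Haar measure on $G/G^{00}$. First, I would apply Fact \ref{fac: measure is average of types} to $\mu$ with the single formula $\psi(x;y,u) := \phi(u^{-1}\cdot x;\, y)$ (which encodes all $G$-translates of instances of $\phi$) and accuracy $\varepsilon$. This produces $p_0, \ldots, p_{n-1} \in S(\mu)$ such that, uniformly in $b \in \M$ and $g \in G$,
$$\mu(g\cdot\phi(x,b)) \;\approx^\varepsilon\; \tfrac{1}{n}\, \bigl|\{i<n : g\cdot\phi(x,b) \in p_i\}\bigr|;$$
$G$-invariance of $\mu$ collapses the left-hand side to $\mu(\phi(x,b))$, giving an $\varepsilon$-approximation uniform in $g$. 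Moreover, since $\mu$ is $G^{00}$-invariant, Proposition \ref{lem_PosGInvMeasImpliesFGen} guarantees that every $p_i \in S(\mu)$ is $f$-generic, so each $\mu_{p_i}$ is defined via Definition \ref{def: mu_p}.

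Next, I would fix $b \in \M$ and introduce the function $F : G/G^{00} \to [0,1]$ given by $F(\bar g) = \tfrac{1}{n}\,\bigl|\{ i<n : g^{-1}\cdot \phi(x,b) \in p_i\}\bigr|$. Well-definedness on $G^{00}$-cosets uses the $G^{00}$-invariance of each $p_i$ (Proposition \ref{prop:f-gen iff G00-inv}), and Borel measurability follows by rewriting $F = \tfrac{1}{n}\sum_{i<n} \mathbf{1}_{A_{\phi(x,b), p_i}}$ and invoking the Borel-definability of each $A_{\phi(x,b), p_i}$ established in the discussion following Definition \ref{def: mu_p}. Applying the estimate from the first step with $g$ replaced by $g^{-1}$ yields the pointwise bound $|F(\bar g) - \mu(\phi(x,b))| \leq \varepsilon$ for every $\bar g \in G/G^{00}$.

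Finally, integrating $F$ against the normalized Haar probability measure $h_0$ on $G/G^{00}$ yields
$$\int_{G/G^{00}} F\, dh_0 \;=\; \tfrac{1}{n} \sum_{i<n} h_0\bigl(A_{\phi(x,b), p_i}\bigr) \;=\; \tfrac{1}{n} \sum_{i<n} \mu_{p_i}(\phi(x,b)),$$
so the desired inequality $\bigl|\tfrac{1}{n} \sum_{i<n} \mu_{p_i}(\phi(x,b)) - \mu(\phi(x,b))\bigr| \leq \varepsilon$ follows immediately since $h_0$ is a probability measure and the integrand is $\varepsilon$-close to the constant $\mu(\phi(x,b))$. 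There is no substantial obstacle: the mildly delicate points are only the descent of $F$ to the quotient and the identification of $\int \mathbf{1}_{A_{\phi(x,b), p_i}}\, dh_0$ with $\mu_{p_i}(\phi(x,b))$, both of which are transparent from the definitions.
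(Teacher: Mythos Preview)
Your proof is correct and in fact more direct than the paper's. Both arguments begin identically: apply Fact \ref{fac: measure is average of types} to the formula $\psi(x;y,u)=\phi(u^{-1}x;y)$ to get $p_0,\ldots,p_{n-1}\in S(\mu)$ with $\Av_i(g\cdot\phi(x,b)\in p_i)\approx^\varepsilon\mu(\phi(x,b))$ uniformly in $b$ and $g$, and note the $p_i$'s are $f$-generic by Proposition \ref{lem_PosGInvMeasImpliesFGen}. The paper then invokes Proposition \ref{prop: countable approximation} (itself resting on the VC-theorem together with the measurability analysis of Lemma \ref{lem_measurability} and the countable-language reduction of Lemma \ref{lem: reducing measures to countable sublanguage}) to replace each $\mu_{p_i}(\phi(x,b))$ by a finite average $\Av_j(g_j\phi(x,b)\in p_i)$, and finishes by swapping the order of the two finite averages. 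You bypass this entirely: instead of discretizing, you integrate the function $\bar g\mapsto \Av_i(g^{-1}\cdot\phi(x,b)\in p_i)$ against Haar measure, using only the Borel-definability established immediately after Definition \ref{def: mu_p} to justify measurability, and then read off $\frac{1}{n}\sum_i\mu_{p_i}(\phi(x,b))$ directly from the definition of $\mu_{p_i}$. Your route is shorter and avoids the detour through Proposition \ref{prop: countable approximation}; the paper's route has the (minor) advantage of reusing machinery already set up for Proposition \ref{OrbitPreservesMuP}, and of yielding the approximation via a finite set of $g_j$'s rather than an integral.
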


\begin{proof}

As before, we may assume that every translate of an instance of $\phi(x,y)$ is an instance of $\phi(x,y)$. Fix $\varepsilon > 0$.

By Fact \ref{fac: measure is average of types} there are some $p_0, \ldots, p_{n-1} \in S(\mu)$ such that $\mu(\phi(x,b)) \approx^{\varepsilon} \Av(\phi(x,b) \in p_i)$ for all $b \in \M$.  It follows by $G$-invariance of $\mu$ and the assumption on $\phi$ that for any $g \in G$ and $b \in \M$, $\Av(g \phi(x,b) \in p_i) \approx^{\varepsilon} \mu(\phi(x,b))$.

By Proposition \ref{lem_PosGInvMeasImpliesFGen}, all of the $p_i$'s are f-generic. By Proposition \ref{prop: countable approximation} with $S=\{ p_0, \ldots, p_{n-1}\}$, for every $b \in \M$ there are some $g_0, \ldots, g_{m-1} \in G$ such that for any $i<n$, $\mu_{p_i}( \phi(x,b)) \approx^{\varepsilon} \Av(g_j  \phi(x,b) \in p_i)$.

So let $b \in \M$ be arbitrary, and choose the corresponding $g_0, \ldots, g_{m-1}$ for it. By the previous remarks we have $$\frac{1}{n}\sum_{i<n}\mu_{p_i}(\phi(x,b)) \approx^{\varepsilon} \frac{1}{n} \sum_{i<n} \Av(g_j \phi(x,b) \in p_i) = $$ $$=\frac{1}{n} \sum_{i<n} \left( \frac{1}{m} \sum_{j<m} "g_j \phi(x,b) \in p_i" \right) =  \frac{1}{m} \sum_{j<m} \left( \frac{1}{n} \sum_{i<n} "g_j \phi(x,b) \in p_i" \right) = $$
$$= \frac{1}{m} \sum_{j<m} \Av(g_j \phi(x,b) \in p_i) \approx^{\varepsilon} \frac{1}{m} \sum_{j<m} \mu(\phi(x,b)) = \mu(\phi(x,b)) \textrm{.}$$ Thus $\mu(\phi(x,b)) \approx^{2 \varepsilon} \frac 1 n \sum_{i<n} \mu_{p_i}(\phi(x,b))$.
\end{proof}

\begin{cor} \label{cor: mu_p approx via support}
Let $\mu$ be a $G$-invariant measure and assume that $S(\mu) \subseteq \overline{G\cdot p}$ for some $f$-generic $p$. Then $\mu = \mu_p$.
\end{cor}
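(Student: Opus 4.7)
The plan is to use this as a direct corollary of Lemma \ref{lem_ApproxInvMeasByMuP} combined with Proposition \ref{OrbitPreservesMuP}. Fix a formula $\phi(x) \in L_G(\M)$ (viewed as an instance of some $\phi(x,y)$) and an arbitrary $\varepsilon > 0$. The goal is to show $\mu(\phi(x)) = \mu_p(\phi(x))$, after which we conclude by the arbitrariness of $\phi$.

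First, apply Lemma \ref{lem_ApproxInvMeasByMuP} to obtain $f$-generic types $p_0, \ldots, p_{n-1} \in S(\mu)$ such that
\[
\left| \mu(\phi(x)) - \frac{1}{n} \sum_{i < n} \mu_{p_i}(\phi(x)) \right| \leq \varepsilon.
\]
Second, use the hypothesis $S(\mu) \subseteq \overline{G \cdot p}$ to conclude that each $p_i$ lies in $\overline{G \cdot p}$. Proposition \ref{OrbitPreservesMuP} then gives $\mu_{p_i} = \mu_p$ for every $i < n$, so the average collapses to $\mu_p(\phi(x))$, yielding $|\mu(\phi(x)) - \mu_p(\phi(x))| \leq \varepsilon$.

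Since $\varepsilon > 0$ and $\phi(x)$ were arbitrary, $\mu$ and $\mu_p$ agree on every definable set, hence $\mu = \mu_p$.

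There is no real obstacle here beyond verifying that the two earlier results apply cleanly: Lemma \ref{lem_ApproxInvMeasByMuP} requires the approximating types to lie in $S(\mu)$ (which it supplies), and Proposition \ref{OrbitPreservesMuP} requires the $p_i$'s to lie in the orbit closure $\overline{G \cdot p}$ (which follows from the inclusion $S(\mu) \subseteq \overline{G \cdot p}$). The proof is thus essentially a two-line chain of references.
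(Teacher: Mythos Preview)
Your proof is correct and follows essentially the same approach as the paper's own proof: apply Lemma~\ref{lem_ApproxInvMeasByMuP} to get approximating types $p_0,\ldots,p_{n-1}\in S(\mu)$, then use $S(\mu)\subseteq\overline{G\cdot p}$ together with Proposition~\ref{OrbitPreservesMuP} to collapse the average to $\mu_p(\phi(x))$.
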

\begin{proof}
Let $\phi(x) \in L_G(\M)$ and $\varepsilon > 0$ be arbitrary. By Lemma \ref{lem_ApproxInvMeasByMuP} we can find some $f$-generic $p_0, \ldots, p_{n-1} \in S(\mu)$ such that $\mu(\phi(x)) \approx^{\varepsilon} \Av(\mu_{p_i}(\phi(x)) : i<n)$. But as $p_i \in S(\mu) \subseteq \overline{G\cdot p}$, it follows by Proposition \ref{OrbitPreservesMuP} that $\mu_{p_i} = \mu_p$ for all $i<n$, so $\mu(\phi(x)) \approx^{\varepsilon} \mu_{p}(\phi(x))$.
\end{proof}

\subsection{Weak genericity and almost periodic types}

Now we return to the notions of genericity for definable subsets of definable groups and add to the picture another one motivated by topological dynamics, due to Newelski.

We will be using the standard terminology from topological dynamics: Given a group $G$, a $G$-flow is a compact space $X$ equipped with an action of $G$ such that every $x\mapsto g\cdot x$, $g\in G$ is a homeomorphism of $X$. We will usually write a $G$-flow $X$ as a pair $(G,X)$. A set $Y \subseteq X$ is said to be a \emph{subflow} if $Y$ is closed and $G$-invariant. The flows relevant to us are $(S_G(\M),G(\M))$ and $(S_G(M),G(M))$ for a small model $M$.

\begin{defi}[\cite{New4, poizat1987groupes}] \label{def: weak generics, etc}
\begin{enumerate} 
\item A formula $\phi(x) \in L_G(\M)$ is (left-)\emph{generic} if there are some finitely many $g_0, \ldots, g_{n-1} \in G$ such that $G = \bigcup_{i<n} g_i \phi(x)$.
\item A formula $\phi(x) \in L_G(\M)$ is (left-)\emph{weakly generic} if there is formula $\psi(x)$ which is not generic, but such that $\phi(x) \lor \psi(x)$ is generic.
\item A (partial) type is (weakly) generic if it only contains (weakly) generic formulas.
\item A type $p \in S_G(\M)$ is called almost periodic if it belongs to a minimal flow in $(S_G(\M),G(\M))$ (i.e., a minimal $G$-invariant closed set), equivalently if for any $q \in \overline{G\cdot p}$ we have $\overline{G\cdot p} = \overline{G\cdot q}$.
\end{enumerate}
\end{defi}

\begin{fact} [\cite{New4}, Section 1] \label{BasicWeakGenAP}
The following hold, in an arbitrary theory:
\begin{enumerate}
\item The formula $\phi(x)$ is weakly generic if and only if for some finite $A \subseteq G$, $X \setminus (A\cdot \phi(x))$ is not generic.
\item The set of non weakly generic formulas forms a $G$-invariant ideal. In particular, there are always global weakly generic types by compactness.

\item The set of all weakly generic types is exactly the closure of the set of all almost periodic types in $S_G(\M)$.

\item Every generic type is weakly generic. Moreover, if there is a global generic type then every weakly generic type is generic, and the set of generic types is the unique minimal flow in $(S_G(\M),G(\M))$.

\item A type $p(x)$ is almost periodic if and only if for every $\phi(x) \in p$, the set $\overline{G\cdot p}$ is covered by finitely many left translates of $\phi(x)$.

\end{enumerate}
\end{fact}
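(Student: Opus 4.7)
The plan is to prove the five items in the order $(1), (2), (5), (3), (4)$, identifying each $\phi(x) \in L_G(\M)$ with its clopen $[\phi] \subseteq S_G(\M)$ and writing $A \cdot \phi := \bigcup_{a \in A} a \phi$. Item (1) unwinds the definitions: if $\phi$ is weakly generic with non-generic witness $\psi$ and $A \cdot (\phi \lor \psi) = G$, then $\neg(A\phi) \subseteq A\psi$, which is non-generic (a finite cover of $A\psi$ yields one of $\psi$), so $\neg(A\phi)$ is non-generic; conversely, if $\neg(A\phi)$ is non-generic, setting $\psi := \neg(A\phi)$ gives $(A \cup \{e\}) \cdot (\phi \lor \psi) = G$. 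For (2), $G$-invariance and closure under subformulas are immediate. The substantive content is closure under finite union: assuming $\phi_1, \phi_2$ are not weakly generic, (1) yields that $\neg(A' \phi_i)$ is generic for every finite $A'$. Fix $A$, choose $B_1$ with $B_1 \cdot \neg(A \phi_1) = G$, then apply the $\phi_2$ hypothesis to the finite set $B_1 A$ to obtain $B_2$ with $B_2 \cdot \neg((B_1 A) \phi_2) = G$. A direct verification shows $B_2 B_1 \cdot [\neg(A\phi_1) \cap \neg(A\phi_2)] = G$; since $\neg(A \phi_1) \cap \neg(A \phi_2) = \neg(A (\phi_1 \lor \phi_2))$, (1) gives that $\phi_1 \lor \phi_2$ is not weakly generic.

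For (5), if $p$ is almost periodic and $\phi \in p$, every orbit in the minimal flow $\overline{G \cdot p}$ meets the open set $[\phi]$, so $\overline{G \cdot p} \subseteq \bigcup_{g \in G} g^{-1}[\phi]$, and compactness gives a finite subcover. Conversely, under the covering hypothesis, for any $q \in \overline{G \cdot p}$: if $p \notin \overline{G \cdot q}$ then some $\phi \in p$ has $[\phi] \cap \overline{G \cdot q} = \emptyset$, so by $G$-invariance of $\overline{G \cdot q}$ we get $h \neg\phi \in q$ for every $h \in G$, contradicting $a \phi \in q$ for the $a$ witnessing $q \in \overline{G \cdot p} \subseteq A \cdot [\phi]$. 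Hence every $q \in \overline{G \cdot p}$ has $\overline{G \cdot q} = \overline{G \cdot p}$, so $p$ is almost periodic.

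For (3), the inclusion $\overline{\mathrm{AP}} \subseteq \mathrm{WG}$ combines (5) and (1): if $\overline{G \cdot p} \subseteq A \cdot [\phi]$ and $\neg(A\phi)$ were generic, a finite family of $G$-translates of $[\neg(A\phi)]$ would cover $S_G(\M)$ and meet $\overline{G\cdot p}$, contradicting the $G$-invariance of $\overline{G\cdot p} \subseteq [A\phi]$; so $\phi$ is weakly generic. Closedness of $\mathrm{WG}$ is immediate from (2). For the reverse, given $p$ weakly generic and $\phi \in p$, (1) gives finite $A$ with $\neg(A\phi)$ non-generic, so by compactness $\bigcap_{g \in G} g \cdot [A\phi] \neq \emptyset$; fix $q$ in this intersection, so $\overline{G \cdot q} \subseteq [A\phi]$. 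A minimal subflow $X_0 \subseteq \overline{G \cdot q}$ meets some $a \cdot [\phi]$, and translating by $a^{-1}$ produces an almost periodic type in $[\phi]$. Intersecting finitely many formulas in $p$ handles arbitrary basic neighborhoods.

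For (4), every generic $\phi$ is weakly generic with witness $\psi := (x \neq x)$. Assume now a global generic type $p$ exists. For any $q \in S_G(\M)$ and $\phi \in p$, genericity of $\phi$ gives finite $A$ with $A \phi = G$, so $a^{-1} q \in [\phi]$ for some $a$; running this across all $\phi \in p$ and using compactness of $\{p\} = \bigcap_{\phi \in p} [\phi]$ yields $p \in \overline{G \cdot q}$. Hence every minimal subflow contains $p$; it must therefore be unique and equal to $\overline{G \cdot p}$, which coincides with the set of generic types (any $q$ there has every $\psi \in q$ meeting $G \cdot p$ in $[\psi]$, making $\psi$ generic; conversely any generic $q$ satisfies $p \in \overline{G \cdot q}$, forcing $\overline{G\cdot q} = \overline{G \cdot p}$). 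Since $\mathrm{AP} = \overline{G\cdot p}$ is now closed, (3) identifies weakly generic types with generic types. The main difficulty lies in item (2) --- the asymmetric choice of auxiliary sets $B_1$ and then $B_2$ via $B_1 A$ --- while the other items are standard compactness/minimal-flow arguments in the Stone space flow $(G, S_G(\M))$.
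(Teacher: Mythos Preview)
The paper does not prove this statement at all: it is recorded as a Fact with a citation to \cite{New4}, Section~1, and no argument is given. Your proof is correct and self-contained; in particular your treatment of item~(2), with the asymmetric choice of $B_1$ covering from $\neg(A\phi_1)$ and then $B_2$ covering from $\neg(B_1 A\,\phi_2)$, is exactly the standard argument due to Newelski. The remaining items are indeed routine compactness/minimal-flow manipulations.

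One cosmetic point in item~(4): from ``$p \in \overline{G\cdot q}$'' alone you only get $\overline{G\cdot p} \subseteq \overline{G\cdot q}$, not equality. The clean way to phrase the converse is that if $q$ is generic then, running the same argument with $q$ in place of $p$, one obtains $q \in \overline{G\cdot r}$ for every $r$, in particular $q \in \overline{G\cdot p}$; this gives the desired containment of generic types in the minimal flow directly. This is clearly what you intend, so it is a matter of wording rather than a gap.
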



%

We connect these definitions to the notions of genericity from the previous sections. As before, we always assume that $G = G(\M)$ is NIP.

\begin{prop} \label{prop_WGenImpliesFGen}
Let $G$ be definably amenable and let $\phi(x) \in L_G(M)$ be a weakly generic formula. Then it is f-generic.
\end{prop}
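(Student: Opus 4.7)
The plan is a proof by contradiction driven entirely by the ideal structure of non-$f$-generic formulas (Corollary \ref{prop_ideal}) and the reformulation of $f$-genericity as non-$G$-dividing (Proposition \ref{prop_Gdivide}); no further measure-theoretic input is needed. I will assume $\phi$ is weakly generic but, for contradiction, not $f$-generic, and derive that the formula $\psi$ witnessing the weak genericity must then be generic---contradicting its non-genericity.

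By weak genericity, fix a non-generic $\psi(x) \in L_G(\M)$ and $g_0, \ldots, g_{n-1} \in G$ with $G = \bigcup_{i<n} g_i \cdot (\phi \lor \psi) = X \cup Y$, where $X := \bigcup_{i<n} g_i \cdot \phi$ and $Y := \bigcup_{i<n} g_i \cdot \psi$. Since $\phi$ is assumed not $f$-generic and non-$f$-genericity is a $G$-invariant ideal (closed under translation, passage to subsets, and finite unions), the finite union $X$ is non-$f$-generic. The set-theoretic inclusion $G \setminus Y \subseteq X$ then forces $G \setminus Y$ to be non-$f$-generic as well, by downward closure of the ideal: any $M$-indiscernible sequence witnessing $G$-dividing of $X$ automatically witnesses $G$-dividing of its definable subset $G \setminus Y$.

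Applying Proposition \ref{prop_Gdivide} to $G \setminus Y$, I obtain an $M$-indiscernible sequence $(h_j)_{j < \omega}$ such that $\{h_j \cdot (G \setminus Y)\}_{j<\omega} = \{G \setminus h_j \cdot Y\}_{j<\omega}$ is $k$-inconsistent for some $k < \omega$. De Morgan's laws then give $G = \bigcup_{\ell = 1}^{k} h_{j_\ell} \cdot Y$ for any choice of indices $j_1 < \cdots < j_k$; since $Y$ is itself a union of $n$ translates of $\psi$, this exhibits $G$ as a union of $kn$ translates of $\psi$, making $\psi$ generic and yielding the desired contradiction. I do not anticipate a genuine obstacle: the whole argument is a short set-theoretic manipulation combining the ``ideal smallness'' of $G \setminus Y$ forced by weak genericity of $\phi$ with the ``covering failure'' of $Y$ forced by non-genericity of $\psi$, two notions that become incompatible once dualized via De Morgan on the $G$-dividing witness.
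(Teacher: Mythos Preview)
Your proof is correct and follows essentially the same approach as the paper's: both argue by contradiction, use that non-$f$-generic formulas form a $G$-invariant ideal (Corollary \ref{prop_ideal}) to conclude that the union $X$ of translates of $\phi$ is not $f$-generic, extract an indiscernible sequence witnessing inconsistency of translates, and apply a De Morgan / complement argument to conclude that finitely many translates of $\psi$ cover $G$. The only cosmetic difference is that the paper applies the dividing argument directly to $X=A\cdot\phi$ (via the definition of $f$-generic), whereas you first pass to the subset $G\setminus Y$ and invoke the $G$-dividing characterization from Proposition \ref{prop_Gdivide}; the resulting computations are identical.
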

\begin{proof}
We adapt the argument from \cite[Lemma 1.8]{NewPetr}. As $\phi(x)$ is weakly generic, let $\psi(x)$ be non-generic and $A \subset G$ a finite set such that $A \cdot (\phi(x) \lor \psi(x)) = X$. We may assume that $A \subset M$ and that $\psi(x)$ is defined over $M$. Assume that $\phi(x)$ is not f-generic over $M$. The set of formulas which are not f-generic is $G$-invariant, and moreover it is an ideal by Corollary \ref{prop_ideal}. Thus $A \cdot \phi(x)$ is not f-generic, which implies that there is some $g \in G$ such that $g \cdot A \cdot \phi(x)$ divides over $M$. That is, there is an $M$-indiscernible sequence $(g_i)_{i < k}$ such that $\bigcap_{i < k} g_i \cdot A \cdot \phi(x) = \emptyset$.

As $A \cdot \phi(x) \cup A \cdot \psi(x) = G$, we also have $g_i \cdot A \cdot \phi(x) \cup g_i \cdot A \cdot \psi(x) = G$ for every $i<k$. Thus $G \setminus \bigcup_{i<k} g_i \cdot A \cdot \psi(x) \subseteq \bigcap_{i<k} g_i \cdot A \cdot \phi(x) = \emptyset$. But this means that $\psi(x)$ is generic, a contradiction.
\end{proof}

\begin{prop} \label{prop: ap implies f-generic}
Assume that $G$ is definably amenable.
\begin{enumerate}
\item If $p$ is almost periodic then it is $f$-generic and $\overline{G \cdot p} = S(\mu_p)$.
\item Minimal flows in $S_G(\M)$ are exactly the sets of the form $S(\mu_p)$ for some $f$-generic $p$.
\item If $p,q$ are almost periodic and $\mu_p = \mu_q$ then $\overline{G\cdot p} = \overline{G\cdot q}$.
\end{enumerate}
\end{prop}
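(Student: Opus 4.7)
The plan is to prove (1) first, extract (3) as an immediate corollary, and then deduce (2) by combining (1) with Proposition \ref{OrbitPreservesMuP}. The key observation in all three parts is that for an almost periodic type $p$, the orbit closure $\overline{G\cdot p}$ is a \emph{minimal} flow, and the support $S(\mu_p)$ is a closed $G$-invariant subflow sitting inside it, so minimality forces equality.

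For (1), I would first show that $p$ is $f$-generic, so that $\mu_p$ is defined at all: by Fact \ref{BasicWeakGenAP}(3), every almost periodic type is weakly generic, and by Proposition \ref{prop_WGenImpliesFGen} every weakly generic formula is $f$-generic. Hence $p$ is $f$-generic and $\mu_p$ exists via Definition \ref{def: mu_p}. The inclusion $S(\mu_p)\subseteq\overline{G\cdot p}$ is exactly Remark \ref{rem: InvarianceOfMup}(2). For the reverse inclusion, I note that $S(\mu_p)$ is a nonempty closed subset of $S_G(\M)$ (as the support of a regular probability measure on a compact Hausdorff space), and it is $G$-invariant because $\mu_p$ is $G$-invariant (Remark \ref{rem: InvarianceOfMup}(1)). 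Thus $S(\mu_p)$ is a subflow of the minimal flow $\overline{G\cdot p}$, whence $S(\mu_p)=\overline{G\cdot p}$. Part (3) then follows instantly: if $p,q$ are almost periodic with $\mu_p=\mu_q$, then by (1) we have $\overline{G\cdot p}=S(\mu_p)=S(\mu_q)=\overline{G\cdot q}$.

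For (2), the inclusion ``minimal flows are of the form $S(\mu_p)$'' is direct: given a minimal flow $\mathcal{M}\subseteq S_G(\M)$, pick any $p\in\mathcal{M}$; then $p$ is almost periodic and (1) yields $\mathcal{M}=\overline{G\cdot p}=S(\mu_p)$. For the converse, let $p$ be an arbitrary $f$-generic type. By Zorn's lemma applied to the poset of nonempty closed $G$-invariant subsets of the compact space $\overline{G\cdot p}$, there exists a minimal subflow $\mathcal{M}\subseteq\overline{G\cdot p}$. Pick any $q\in\mathcal{M}$: then $q$ is almost periodic and satisfies $q\in\overline{G\cdot p}$, so by Proposition \ref{OrbitPreservesMuP} we have $\mu_q=\mu_p$. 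Applying (1) to $q$ gives $S(\mu_p)=S(\mu_q)=\overline{G\cdot q}=\mathcal{M}$, so $S(\mu_p)$ is indeed a minimal flow.

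There is no real obstacle beyond checking the soft-topological facts that $S(\mu_p)$ is a nonempty closed $G$-invariant set, and that every orbit closure contains a minimal subflow; these are completely standard. The conceptual content is entirely packaged in Proposition \ref{prop_WGenImpliesFGen} (to get $f$-genericity of almost periodic types, making $\mu_p$ meaningful) and Proposition \ref{OrbitPreservesMuP} (to transport $\mu_p$ across the orbit closure).
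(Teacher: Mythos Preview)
Your proof is correct. Parts (1) and (3) are essentially identical to the paper's argument. For part (2), the paper takes a slightly different route: to show that $S(\mu_p)$ is minimal for an arbitrary $f$-generic $p$, it argues directly that every $q\in S(\mu_p)$ is almost periodic by invoking Proposition~\ref{prop_locallygeneric} (any positive-measure formula has finitely many translates covering $S(\mu_p)$) together with the characterization in Fact~\ref{BasicWeakGenAP}(5). Your approach instead picks a minimal subflow $\mathcal{M}\subseteq\overline{G\cdot p}$ via Zorn, then uses Proposition~\ref{OrbitPreservesMuP} and part~(1) to identify $\mathcal{M}$ with $S(\mu_p)$. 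Both routes ultimately rest on Proposition~\ref{OrbitPreservesMuP} (since Proposition~\ref{prop_locallygeneric} itself depends on it), so the conceptual content is the same; your argument is marginally more economical in that it does not require isolating Proposition~\ref{prop_locallygeneric} as a separate step, while the paper's version makes explicit the useful fact that every element of $S(\mu_p)$ is almost periodic.
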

\begin{proof}

(1) An almost periodic type $p$ contains only weakly generic formulas and hence is $f$-generic by Proposition \ref{prop_WGenImpliesFGen}. As $S(\mu_p) \subseteq \overline{G\cdot p}$ (see Remark \ref{rem: InvarianceOfMup}), it follows by minimality that $S(\mu_p) = \overline{G\cdot p}$.

(2) For an $f$-generic $p$, the set $S(\mu_p)$ is a subflow by $G$-invariance of $\mu_p$. If $q \in S(\mu_p)$ and $\phi(x) \in q$, then $\mu_p(\phi(x)) > 0$ and by Proposition \ref{prop_locallygeneric} there are finitely many translates of $\phi(x)$ which cover $S(\mu_p)$, so in particular they cover $\overline{G\cdot q} \subseteq S(\mu_p)$. Thus $q$ is almost periodic (by the usual characterization of almost periodic types from Fact \ref{BasicWeakGenAP}(5)).

(3) is clear.
\end{proof}

In particular, for any $f$-generic type $p$ there is some almost periodic type $q$ with $\mu_p = \mu_q$. However, the following question remains open \footnote{While this paper was under review, a negative answer was obtained in \cite{pillay2016minimal}.}.
\begin{problem}\label{question}
Is every $f$-generic type almost periodic? Equivalently, does $p \in S(\mu_p)$ always hold? 
\end{problem}

Now towards the converse. 

\begin{prop} \label{prop: non-G-div extends to AP}
Let $G$ be definably amenable. Assume that $\phi\left(x\right)$ does not $G$-divide. Then there are some global almost periodic types $p_0, \ldots, p_{n-1} \in S_G(\M)$ such that for any $g \in G$ there is some $i<n$ such that $g \phi(x) \in p_i$ holds.

\end{prop}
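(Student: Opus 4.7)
The plan is to apply Matousek's $(p,q)$-theorem (Fact \ref{fac: p,q-theorem}) inside a carefully chosen compact minimal flow of $S_G(\M)$; since every point of a minimal flow is automatically almost periodic, this sidesteps the issue that the set of all almost periodic types in $S_G(\M)$ is typically not closed.

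Since $\phi(x)$ does not $G$-divide, Proposition \ref{prop_Gdivide} gives that $\phi$ is $f$-generic. The first step is to produce an almost periodic type $q \in S_G(\M)$ with $\varepsilon := \mu_q(\phi) > 0$, equivalently a minimal flow meeting the clopen set $[\phi]$. Using that $\phi$ is $f$-generic, we find a $G$-invariant measure $\mu$ with $\mu(\phi) > 0$ (Theorem \ref{thm: Main theorem 2}); Lemma \ref{lem_ApproxInvMeasByMuP} applied to $\mu$ then yields an $f$-generic $p \in S_G(\M)$ with $\mu_p(\phi) > 0$. By Proposition \ref{prop: ap implies f-generic}(2), $S(\mu_p) = \overline{G \cdot p}$ is a minimal flow; picking any $q \in S(\mu_p)$, Proposition \ref{OrbitPreservesMuP} gives $\mu_q = \mu_p$, and $q$ is almost periodic, as desired.

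Now set $\mathcal{M} := S(\mu_q) = \overline{G \cdot q}$, and consider the family $\mathcal{F} := \{[g\phi] \cap \mathcal{M} : g \in G\}$ of clopen subsets of the compact space $\mathcal{M}$. By NIP, the family of translates $\{[g\phi] : g \in G\} \subseteq S_G(\M)$ has finite VC-dimension, which $\mathcal{F}$ inherits. Let $k$ be the integer supplied by Matousek's theorem for this VC-dimension, and choose $P \geq k$ large enough that $P\varepsilon > k-1$. Then $\mathcal{F}$ has the $(P,k)$-property: for any $g_1, \ldots, g_P \in G$, $G$-invariance of $\mu_q$ gives $\mu_q(g_i\phi) = \varepsilon$ for each $i$, hence
\[
\int_{\mathcal{M}} \big|\{i \leq P : g_i\phi \in r\}\big| \, d\mu_q(r) = P\varepsilon > k-1,
\]
so the set $\big\{r \in \mathcal{M} : |\{i \leq P : g_i\phi \in r\}| \geq k\big\}$ has positive $\mu_q$-measure, and any $r$ in it witnesses the $(P,k)$-property.

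Matousek's theorem now supplies an integer $N$ such that every finite subfamily of $\mathcal{F}$ admits an $N$-point transversal in $\mathcal{M}$. For each finite $F \subseteq G$ the set
\[
T_F := \big\{(r_0, \ldots, r_{N-1}) \in \mathcal{M}^N : \forall g \in F \, \exists i < N, \, g\phi \in r_i\big\}
\]
is a nonempty clopen subset of the compact space $\mathcal{M}^N$, and the family $\{T_F\}_F$ has the finite intersection property; hence $\bigcap_F T_F \neq \emptyset$, and any $(p_0, \ldots, p_{N-1})$ in this intersection gives the desired tuple, since each $p_i$ lies in the minimal flow $\mathcal{M}$ and is therefore almost periodic. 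The main obstacle is the very first step, producing an almost periodic $q$ with $\mu_q(\phi) > 0$: this relies on the direction $f$-generic $\Rightarrow$ positive invariant-measure of Theorem \ref{thm: Main theorem 2} (equivalently, on every $f$-generic formula being in some almost periodic type), which is the nontrivial input to the argument.
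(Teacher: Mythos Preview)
Your argument is circular. The ``nontrivial input'' you flag in your last paragraph---that an $f$-generic formula has positive measure for some $G$-invariant measure, i.e.\ the implication $(1)\Rightarrow(4)$ (or $(1)\Rightarrow(5)$) of Theorem~\ref{thm:AllGenericNotionsCoincide}---is proved in the paper precisely via Proposition~\ref{prop: non-G-div extends to AP}: see Corollary~\ref{cor_FGenImpliesWGen}, whose proof begins by invoking the almost periodic types $p_0,\ldots,p_{n-1}$ supplied by the proposition you are trying to prove. There is no independent route in the paper from ``$\phi$ does not $G$-divide'' to ``$\mu(\phi)>0$ for some $G$-invariant $\mu$'', so your first step assumes the conclusion.

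The paper avoids this by verifying the $(p,k)$-property for $\{g\phi:g\in G\}$ \emph{directly} from non-$G$-dividing, with no measure in sight: if it failed for every $p$, a standard Ramsey/compactness argument would produce an infinite indiscernible sequence $(g_i)$ with $\{g_i\phi\}$ $k$-inconsistent, contradicting the hypothesis. This yields an $N$-point transversal $(p_0,\ldots,p_{N-1})$ in $S_G(\M)^N$; to upgrade these to almost periodic types, the paper passes to a minimal subflow of the diagonal $G$-flow on $S_G(\M)^N$ and projects to the coordinates. Your device of working inside a single minimal flow $\mathcal M$ from the start is elegant and would give the bonus that all $p_i$ lie in the same minimal flow---but you need the measure $\mu_q$ to check the $(P,k)$-property there, and obtaining that measure is exactly what presupposes the result.
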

\begin{proof}

Let $k \in \omega$ be as given by Fact \ref{fac: p,q-theorem} for the VC-family  $\mathcal{F} = \{ g\phi\left(x\right):g\in G \} $. We claim that $\mathcal{F}$ satisfies the $\left(p,k\right)$-property for some $p < \omega$. If not, then by compactness we can find an infinite indiscernible sequence $(g_i)_{i < \omega}$ in $G$ such that $\{ g_i \phi(x) : i < \omega\}$ is $k$-inconsistent, and so $G$-divides.

By Fact \ref{fac: p,q-theorem} and compactness it follows that there are some $p_{0}, \ldots, p_{N-1} \in S_{G}\left(\M\right)$
which satisfy:
\begin{enumerate}
\item [{$\left(*\right)$}] for every $g\in G$, for some $i<N$, we have $g\phi\left(x\right)\in p_{i}$. 
\end{enumerate}
Now consider the action of $G$ on $\left(S_{G}\left(\M\right)\right)^{N}$
with the product topology, and let $$F=\overline{\left\{ g\cdot\left(p_{0},\ldots,p_{N-1}\right):g\in G\right\} }.$$
It is a subflow, and besides every $\left(q_{0},\ldots,q_{N-1}\right)\in F$
satisfies $\left(*\right)$ (it is clear for translates of $\left(p_{0},\ldots,p_{N-1}\right)$;
if for some $g\in G$ we have $\bigwedge_{i<N}\neg g\cdot \phi\left(x_{i}\right)\in q_{i}$,
then since $\bigwedge_{i<N}\neg g\cdot \phi\left(x_{i}\right)$ is an open
subset of $\left(S_{G}\left(\M\right)\right)^{N}$
with respect to the product topology containing $\left(q_{0},\ldots,q_{N-1}\right)$, it follows that $h\cdot\left(p_{0},\ldots,p_{N-1}\right)$
belongs to it for some $h\in G$, which is impossible). Let $F'$
be a minimal subflow of $F$, and notice that the projection of $F'$
on any coordinate is a minimal subflow of $\left(G,S_{G}\left(\M\right)\right)$.
Thus, taking $\left(q_{0},\ldots,q_{N-1}\right)\in F'$, it follows
that $q_{i}$ is almost periodic for every $i<N$, and every translate
of $\phi\left(x\right)$ belongs to one of the $q_i, i < N$.
\end{proof}

\begin{cor} \label{cor_FGenImpliesWGen} Let $G$ be definably amenable.
If $\phi(x)$ is f-generic, then $\mu_q(\phi(x))>0$ for some global f-generic type $q$.
\end{cor}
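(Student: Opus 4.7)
The plan is to combine Proposition \ref{prop: non-G-div extends to AP} with the finite additivity of the Haar measure on $G/G^{00}$, via a pigeonhole argument.

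First I would reduce to the non-$G$-dividing setting: since $\phi(x)$ is $f$-generic, Proposition \ref{prop_Gdivide} tells us that $\phi(x)$ does not $G$-divide. Apply Proposition \ref{prop: non-G-div extends to AP} to obtain global almost periodic types $p_0, \ldots, p_{n-1} \in S_G(\M)$ with the property that for every $g \in G$, there is some $i < n$ with $g\phi(x) \in p_i$. Each $p_i$ is almost periodic and hence $f$-generic by Proposition \ref{prop: ap implies f-generic}(1), so the measures $\mu_{p_i}$ are well-defined by Definition \ref{def: mu_p}, and each $p_i$ is $G^{00}$-invariant by Proposition \ref{prop:f-gen iff G00-inv}.

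Next I would translate the covering property into a covering of $G/G^{00}$ by the sets $A_{\phi, p_i} = \{\bar{g} \in G/G^{00} : \phi(x) \in g\cdot p_i\}$ that appear in the definition of $\mu_{p_i}(\phi(x)) = h_0(A_{\phi,p_i})$. The key elementary check is that $g^{-1}\phi(x) \in p_i$ is equivalent to $\phi(x) \in g\cdot p_i$: if $a \models p_i$, then $g^{-1}\phi(x) \in p_i$ means $\phi(ga)$ holds, i.e., $ga \models \phi(x)$, and $ga \models g\cdot p_i$ by definition of the translate. Applying the covering statement from Proposition \ref{prop: non-G-div extends to AP} to the element $g^{-1} \in G$ gives, for every $\bar g \in G/G^{00}$, some index $i < n$ with $\bar g \in A_{\phi,p_i}$. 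Thus $\bigcup_{i<n} A_{\phi,p_i} = G/G^{00}$.

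Finally, by finite subadditivity of the Haar probability measure $h_0$,
\[
\sum_{i<n} \mu_{p_i}(\phi(x)) \;=\; \sum_{i<n} h_0(A_{\phi,p_i}) \;\geq\; h_0(G/G^{00}) \;=\; 1,
\]
so at least one summand is positive, i.e., $\mu_{p_i}(\phi(x)) > 0$ for some $i < n$. Setting $q = p_i$ yields the required $f$-generic type. The only nontrivial point is the bookkeeping that translates the covering of $G$ by left-translate-membership sets into a covering of $G/G^{00}$ by the sets $A_{\phi,p_i}$; the main ingredients (passing to non-$G$-dividing, the existence of covering almost periodic types, and their $f$-genericity) are all already in place.
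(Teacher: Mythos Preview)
Your proof is correct and essentially identical to the paper's: both apply Proposition \ref{prop: non-G-div extends to AP} to obtain almost periodic (hence $f$-generic) types $p_0,\ldots,p_{n-1}$ whose associated sets cover $G/G^{00}$, then use pigeonhole for the Haar measure. The only cosmetic difference is that the paper works with $Y_i=\{\bar g : g\phi(x)\in p_i\}$ and implicitly uses inversion-invariance of Haar measure, whereas you apply the covering statement to $g^{-1}$ and work directly with the sets $A_{\phi,p_i}$ appearing in Definition \ref{def: mu_p}.
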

\begin{proof}
Let $p_0, \ldots, p_{n-1}$ be some global almost periodic types given by Proposition \ref{prop: non-G-div extends to AP}, they are also $f$-generic by Proposition \ref{prop: ap implies f-generic}. Let $Y_i = \{ \bar g \in G/G^{00} : g\phi(x) \in p_i \}$. As $\bigcup_{i<n} Y_i = G/G^{00}$ and each of $Y_i$'s is measurable, it follows that $h_0(Y_i) \geq \frac{1}{n}$ for some $i<n$. But then $\mu_{p_i}(\phi(x)) \geq \frac{1}{n}$.
\end{proof}

Summarizing, we have demonstrated that all notions of genericity that we have considered coincide in definable amenable NIP groups.

\begin{thm}\label{thm:AllGenericNotionsCoincide} Let $G$ be definably amenable, NIP.
Let $\phi(x)$ be a definable subset of $G$. Then the following are equivalent:

\begin{enumerate}
\item $\phi(x)$ is f-generic;

\item $\phi(x)$ is not $G$-dividing;

\item $\phi(x)$ is weakly-generic;

\item $\mu(\phi(x))>0$ for some $G$-invariant measure $\mu$;

\item $\mu_p(\phi(x))>0$ for some global $f$-generic type $p$.
\end{enumerate}
\end{thm}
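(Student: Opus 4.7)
My plan is to assemble the implications directly from the machinery developed in the preceding subsections; essentially all the substantive work has been done, and what remains is a short round-robin.

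First I would dispatch the two easy equivalences: (1) $\Leftrightarrow$ (2) is exactly the content of Proposition \ref{prop_Gdivide} (f-generic coincides with not $G$-dividing in a definably amenable NIP group). The implication (5) $\Rightarrow$ (4) is trivial since by Remark \ref{rem: InvarianceOfMup}(1) the measure $\mu_p$ associated to an $f$-generic $p$ is $G$-invariant, so (5) exhibits a witness for (4). Similarly, (4) $\Rightarrow$ (1) is immediate from Proposition \ref{lem_PosGInvMeasImpliesFGen}: if $\mu(\phi(x))>0$ for some $G$-invariant $\mu$ then $\phi(x)$ must be $f$-generic, for otherwise the family of non-$f$-generic formulas, which is an ideal, would contain a positive-measure set contradicting $G^{00}$-invariance of $\mu$.

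The remaining implications close the loops between the dynamical notions and the measure-theoretic ones. For (1) $\Rightarrow$ (5), I would invoke Corollary \ref{cor_FGenImpliesWGen} verbatim: if $\phi(x)$ is $f$-generic, apply Proposition \ref{prop: non-G-div extends to AP} to cover $\phi(x)$ by finitely many almost periodic types (which are $f$-generic by Proposition \ref{prop: ap implies f-generic}(1)), then use pigeonhole on the Haar measure of the corresponding sets $Y_i \subseteq G/G^{00}$ to produce an $f$-generic $p$ with $\mu_p(\phi(x)) > 0$. For (3) $\Rightarrow$ (1), this is exactly Proposition \ref{prop_WGenImpliesFGen}, whose proof uses that the non-$f$-generic formulas form an ideal (Corollary \ref{prop_ideal}).

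The one non-trivial step is (2) $\Rightarrow$ (3) (or equivalently (1) $\Rightarrow$ (3), since (1) $\Leftrightarrow$ (2) is already in hand). Here I would apply Proposition \ref{prop: non-G-div extends to AP}: if $\phi(x)$ does not $G$-divide, there exist global almost periodic types $p_0, \dots, p_{n-1}$ such that every translate $g\phi(x)$ belongs to some $p_i$; in particular $\phi(x)$ itself lies in one of these almost periodic types. Since every almost periodic type contains only weakly generic formulas (by Fact \ref{BasicWeakGenAP}(3), since almost periodic types are weakly generic and weak genericity is a property of formulas), we conclude that $\phi(x)$ is weakly generic. I do not anticipate any real obstacle: the only place where something could go wrong is invoking Fact \ref{BasicWeakGenAP}(3) correctly, namely that membership in an almost periodic type implies weak genericity of the formula. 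With all five implications chained as (1) $\Leftrightarrow$ (2) $\Rightarrow$ (3) $\Rightarrow$ (1) and (1) $\Rightarrow$ (5) $\Rightarrow$ (4) $\Rightarrow$ (1), the theorem follows.
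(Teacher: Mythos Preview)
Your proposal is correct and follows essentially the same route as the paper: (1)$\Leftrightarrow$(2) via Proposition~\ref{prop_Gdivide}, (3)$\Rightarrow$(1) via Proposition~\ref{prop_WGenImpliesFGen}, (1)$\Rightarrow$(5) via Corollary~\ref{cor_FGenImpliesWGen}, (5)$\Rightarrow$(4) trivially, (4)$\Rightarrow$(1) via Proposition~\ref{lem_PosGInvMeasImpliesFGen}, and (1)/(2)$\Rightarrow$(3) via Proposition~\ref{prop: non-G-div extends to AP}. The only place you add a word the paper omits is in (2)$\Rightarrow$(3), where you correctly note that membership in an almost periodic type forces weak genericity of the formula (Fact~\ref{BasicWeakGenAP}(3)); the paper simply cites Proposition~\ref{prop: non-G-div extends to AP} and leaves this step implicit.
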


\begin{proof}
(1) and (2) are equivalent by Proposition \ref{prop_Gdivide}, (1) implies (3) by Proposition \ref{prop: non-G-div extends to AP} and (3) implies (1) by Proposition \ref{prop_WGenImpliesFGen}. Finally, (1) implies (5) by Corollary \ref{cor_FGenImpliesWGen}, (5) implies (4) is obvious and (4) implies (1) by Lemma \ref{lem_PosGInvMeasImpliesFGen}.
\end{proof}


\subsection{Unique ergodicity} \label{sec: unique ergodicity}

We now characterize the case when $G$ admits a unique $G$-invariant measure. Following standard terminology in topological dynamics, we call such a $G$ uniquely ergodic (indeed, it will follow from the next section that this condition is equivalent to $S_G(\monster)$ having a unique regular ergodic measure).

Recall that a $G$-invariant measure $\mu$ is called \emph{generic} if for any definable set $\phi(x)$, $\mu(\phi(x))>0$ implies that $\phi(x)$ is generic. It follows that any $p \in S(\mu)$ is generic.

\begin{thm}\label{thm: unique ergodic}
A definably amenable NIP group $G$ is uniquely ergodic if and only if it admits a generic type (in which case it has a unique minimal flow --- the support of the unique measure).
\end{thm}
\begin{proof}
If $G$ admits a generic type $p$, then for any type $q$, $p$ belongs to the closure $\overline{G\cdot q}$ (if $\phi(x) \in p$ then $X = \bigcup_{i<n} g_i\cdot \phi(x)$ for some $g_i \in G$, so $\phi(x) \in g^{-1}_i q$ for some $i<n$). In particular, for an arbitrary $f$-generic type $q$ we have $\mu_q = \mu_p$ (by Proposition \ref{OrbitPreservesMuP}). By Lemma \ref{lem_ApproxInvMeasByMuP}, this implies that any invariant measure $\mu$ is equal to $\mu_p$, hence there is a unique invariant measure.

Conversely, assume that $G$ admits a unique $G$-invariant measure $\mu$. We claim that $\mu$ is generic. Assume not, and let $\phi(x)$ be a definable set of positive $\mu$-measure and assume that $\phi(x)$ is not generic. Then for any $g_1,\ldots,g_n \in G$, the union $\bigvee_{i<n} g_i\cdot \phi(x)$ is not generic. Hence its complement is weakly generic. By Theorem \ref{thm:AllGenericNotionsCoincide} we conclude that the partial type $\{\neg g\cdot \phi(x) : g\in G(\monster)\}$ is $f$-generic and hence extends to a complete $f$-generic type $p$. The measure $\mu_p$ associated to $p$ gives $\phi(x)$ measure 0, so $\mu_p \neq \mu$, which contradicts unique ergodicity.
\end{proof}

\begin{rem}
In particular, in a uniquely ergodic group every $f$-generic type is almost periodic and generic.
\end{rem}

Recall from \cite{NIP2} that an NIP group $G$ is \emph{fsg} if it admits a global type $p$ such that for some small model $M$, all translates of $p$ are finitely satisfiable over $M$. It is proved that an fsg group admits a unique invariant measure and that this measure is generic. So the previous proposition was known in this special case. We now give an example (pointed out to us by Hrushovski) of a uniquely ergodic group which is not fsg.

\begin{rem}
Let $K_v$ be a model of ACVF and consider $G=(K_v,+)$ the additive group.  By C-minimality, the partial type $p$ concentrating on the complement of all balls is a complete type and is $G$-invariant. There can be no other $G$-invariant measure since any non-trivial ball in $(K_v,+)$ $G$-divides, hence cannot have positive measure for any $G$-invariant measure. Finally, the group $G$ is not fsg since $p$ is not finitely satisfiable.
\end{rem}

\section{Regular ergodic measures} \label{sec: ergodicity}

In this section, we are going to characterize regular ergodic measures on $S_{G}(\M)$ for a definably amenable NIP group $G=G(\M)$, but first we recall some general notions and facts from functional analysis and ergodic theory (see e.g. \cite{walters2000introduction}). As we are going to deal with more delicate measure-theoretic issues here, we will be specific about our measures being regular or not. The reader should keep in mind that all the results in the previous sections only apply to regular measures on $S_G(\M)$.

The set of all regular (Borel, probability) measures on $S_G(\M)$ can be naturally viewed as a subset of $C^*(S_G(\M))$, the dual space of the topological vector space of continuous functions on $S_G(\M)$, with the weak$^*$ topology of pointwise convergence (i.e., $\mu_i \to \mu$ if $\int f d\mu_i \to \int f d \mu$ for all $f \in C(S_G(\M))$). It is easy to check that this topology coincides with the logic topology on the space of measures (Remark \ref{rem: TopOnMeas}). This space carries a natural structure of a real topological vector space containing a compact convex set of $G$-invariant measures.

We will need the following version of a ``converse'' to the Krein-Milman Theorem (see e.g. \cite[Theorem 1]{Jerison1954}. We refer to e.g. \cite[Chapter 8]{simon2011convexity} for a discussion of convexity in topological spaces).
\begin{fact} \label{fac: KreinMilman}
Let $E$ be a real, locally convex, Hausdorff topological vector space. Let $C$ be a compact convex subset of $E$, and let $S$ be a subset of $C$. Then the following are equivalent:
\begin{enumerate}
\item $C = \overline{\conv}S$, the closed convex hull of $S$.
\item The closure of $S$ includes all extreme points of $C$.
\end{enumerate}
\end{fact}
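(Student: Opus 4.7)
The statement is the classical Krein--Milman theorem together with its Milman converse, so the plan is to split the biconditional and invoke the Krein--Milman theorem on one side and a Milman-type separation argument on the other. Throughout we may, and will, replace $S$ by its closure $K := \overline{S}$, which is a closed (hence compact) subset of $C$, and note that $\overline{\conv}\,S = \overline{\conv}\,K$.

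For the direction (2) $\Rightarrow$ (1), the plan is short: assuming $K \supseteq \ext(C)$, we take closed convex hulls to get $\overline{\conv}\,K \supseteq \overline{\conv}(\ext(C))$, and the Krein--Milman theorem (which applies because $E$ is locally convex Hausdorff and $C$ is compact convex) gives $\overline{\conv}(\ext(C)) = C$. Combined with the obvious inclusion $\overline{\conv}\,K \subseteq C$ (since $K \subseteq C$ and $C$ is closed and convex), this yields $C = \overline{\conv}\,S$.

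For the direction (1) $\Rightarrow$ (2), the plan is to argue that $\ext(C) \subseteq K$ via the standard Milman partial converse. Fix $x \in \ext(C)$ and suppose for contradiction that $x \notin K$. Since $K$ is compact and $E$ is locally convex Hausdorff, I can separate $x$ from $K$ by taking a convex, balanced, open neighborhood $U$ of $0$ with $(x + U) \cap K = \emptyset$. Cover the compact set $K$ by finitely many translates $(y_i + U) \cap K$, $i < n$; let $K_i$ be the closure of the convex hull of $(y_i + U)\cap K$ in $C$, so each $K_i$ is compact and convex and $K = \bigcup_{i<n} K_i$, hence
\[
C \;=\; \overline{\conv}\,K \;=\; \conv\bigl(K_0 \cup \cdots \cup K_{n-1}\bigr),
\]
the last equality because the convex hull of a finite union of compact convex sets in a Hausdorff topological vector space is already compact (a standard simplex argument). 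Writing $x = \sum t_i z_i$ with $z_i \in K_i$, $t_i \geq 0$, $\sum t_i = 1$, extremality of $x$ forces $x = z_i \in K_i$ for some $i$ with $t_i > 0$. Shrinking $U$ (and taking finer covers) shows $x$ lies in arbitrarily small closed convex neighborhoods of points of $K$, and running this with neighborhoods inside the separating neighborhood yields $x \in K$, contradicting our assumption.

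The main obstacle is executing the Milman step cleanly in the locally convex (rather than normed) setting: one must be careful to choose the cover $\{y_i + U\}$ so that each $K_i$ remains inside a translate of a small convex neighborhood, and to use local convexity to separate $x$ from $K$ in the first place. Everything else (Krein--Milman, compactness of finite convex hulls of compact convex sets) is standard in locally convex Hausdorff topological vector spaces, and indeed the conclusion is exactly the statement cited from Jerison.
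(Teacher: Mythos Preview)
The paper does not actually prove this statement: it is recorded as a \emph{Fact} and attributed to \cite[Theorem~1]{Jerison1954} without further argument. So there is no proof in the paper to compare your proposal against.

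That said, your outline is the standard Krein--Milman plus Milman converse argument and is essentially correct. The only place that is a bit loose is the end of your $(1)\Rightarrow(2)$ direction. Having first fixed a separating $U$ and then spoken of ``shrinking $U$'' muddles the logic. Cleaner is either: (a) run the covering argument for \emph{every} convex open neighborhood $U$ of $0$, deduce $x\in K+\overline{U}$ for all such $U$, and conclude $x\in K$ by compactness of $K$; or (b) choose at the outset a convex balanced open $U$ with $(x+\overline{U})\cap K=\emptyset$, then from $x\in K_j\subseteq y_j+\overline{U}$ (with $y_j\in K$) get $y_j\in x+\overline{U}$, contradicting the separation directly. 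Either way the argument goes through; just make the quantifier structure explicit.
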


Now we recall the definition of an ergodic measure.

\begin{fact} [{\cite[Proposition 12.4]{phelps2001lectures}}] \label{def: ergodic} Let $G$ be a group acting on a topological space $X$ with $x \mapsto gx$ a Borel map for each $g \in G$, and let $\mu$ be a $G$-invariant Borel probability measure on $X$. Then the following are equivalent:
\begin{enumerate}
\item The measure $\mu$ is an extreme point of the convex set of $G$-invariant measures on $X$.
\item For every Borel set $Y$ such that $\mu(gY \triangle Y) = 0$ for all $g \in G$, we have that either $\mu(Y) = 0$ or $\mu(Y) = 1$.
\end{enumerate}
\end{fact}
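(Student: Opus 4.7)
The plan is to prove the equivalence via the usual two direct arguments, one elementary and one using Radon--Nikodym.

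For the direction $(1) \Rightarrow (2)$, I would argue by contrapositive: suppose there is a Borel set $Y$ with $\mu(gY \triangle Y) = 0$ for every $g \in G$ and $0 < \mu(Y) < 1$. Let $t = \mu(Y)$ and define the two conditional probability measures $\mu_1(A) = \mu(A \cap Y)/t$ and $\mu_2(A) = \mu(A \cap Y^c)/(1-t)$. Then clearly $\mu = t\mu_1 + (1-t)\mu_2$ and $\mu_1 \neq \mu_2$ since $\mu_1(Y)=1$ and $\mu_2(Y)=0$. The only thing to check is $G$-invariance of $\mu_1$ and $\mu_2$: using $G$-invariance of $\mu$ one computes $\mu(gA \cap Y) = \mu(A \cap g^{-1}Y)$, and then the almost-invariance $\mu(g^{-1}Y \triangle Y) = 0$ lets us replace $g^{-1}Y$ by $Y$ up to a $\mu$-null set. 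This contradicts extremality of $\mu$.

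For the harder direction $(2) \Rightarrow (1)$, suppose $\mu = t\mu_1 + (1-t)\mu_2$ for some $t \in (0,1)$ and $G$-invariant Borel probability measures $\mu_i$. The key observation is that $\mu_1 \ll \mu$ (since $t\mu_1 \leq \mu$), so by Radon--Nikodym there is a Borel function $f : X \to [0, 1/t]$ with $\mu_1(A) = \int_A f\, d\mu$. Using $G$-invariance of both $\mu$ and $\mu_1$, for each $g \in G$ the function $f \circ g^{-1}$ is also a Radon--Nikodym derivative of $\mu_1$ with respect to $\mu$, hence $f = f \circ g^{-1}$ $\mu$-almost everywhere. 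Therefore for each $c \in \mathbb{R}$ the sublevel set $Y_c = \{x : f(x) \leq c\}$ satisfies $\mu(gY_c \triangle Y_c) = 0$ for every $g \in G$, and by hypothesis $\mu(Y_c) \in \{0,1\}$.

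This forces $f$ to be $\mu$-almost everywhere equal to some constant $c_0$: indeed $c \mapsto \mu(Y_c)$ is non-decreasing with values in $\{0,1\}$, hence has a unique jump point $c_0$, and $\mu(\{f = c_0\}) = 1$. Since $\mu_1$ is a probability measure, integrating gives $c_0 = 1$, so $\mu_1 = \mu$, and then $\mu_2 = \mu$ as well, proving that $\mu$ is extreme. The main subtlety is the Radon--Nikodym step together with the uniqueness argument showing $f$ is $G$-invariant modulo $\mu$-null sets; everything else is bookkeeping. No hypothesis beyond Borel-measurability of the action $x \mapsto gx$ is needed, which is what makes the formulation suitable for the flow $(G, S_G(\M))$.
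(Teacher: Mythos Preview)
The paper does not prove this statement: it is recorded as a \emph{Fact} and attributed to \cite[Proposition~12.4]{phelps2001lectures}, with no argument given. So there is no ``paper's own proof'' to compare against.

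Your proof is correct and is essentially the standard argument one finds in Phelps. A couple of minor comments: in the $(2)\Rightarrow(1)$ direction, the change-of-variables computation actually gives $f(x)=f(gx)$ $\mu$-a.e.\ (rather than $f=f\circ g^{-1}$), but since $g$ ranges over all of $G$ this is the same thing; and you should observe that $gY_c\triangle Y_c\subseteq\{x:f(g^{-1}x)\neq f(x)\}$ to get the null-set conclusion, which you implicitly use. The Radon--Nikodym step is unproblematic since both measures are finite. Nothing else needs to be said.
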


A $G$-invariant measure is \emph{ergodic} if it satisfies any of the equivalent conditions above. Under many natural conditions on $G$ and $X$ the two notions above are equivalent to the following property of $\mu$: for every $G$-invariant Borel set $Y$, either $\mu(Y) = 0$ or $\mu(Y) = 1$. However this is not the case in general.



\begin{prop} \label{prop: p to mu_p is cont}
The map $p \mapsto \mu_p$ from the (closed) set of global $f$-generic types to the (closed) set of global $G$-invariant measures on $S_G(\M)$ is continuous.
\end{prop}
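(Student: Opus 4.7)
The plan is to verify continuity by showing, for each fixed $\phi \in L_G(\M)$, that the evaluation map $p \mapsto \mu_p(\phi)$ is continuous on the (closed) subspace of $f$-generic types. Since the weak$^{*}$ topology on $\mes_G(\M)$ is generated by the evaluation maps at $L_G(\M)$-formulas, pointwise continuity in $\phi$ is all we need. Fix then $p_0$ $f$-generic, $\phi \in L_G(\M)$ and $\varepsilon>0$; the goal is to produce a basic clopen neighborhood $U$ of $p_0$ with $|\mu_p(\phi)-\mu_{p_0}(\phi)|<\varepsilon$ for every $f$-generic $p\in U$.

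The central step is a \emph{uniform} approximation: find $g_0,\ldots,g_{m-1}\in G$ such that
\[
\Big| \mu_p(\phi) - \tfrac{1}{m}\#\{j<m : g_j\phi \in p\} \Big| < \varepsilon/2
\]
for \emph{every} $f$-generic $p$. Once such $g_j$'s are in hand, the set $U=\{p : g_j\phi\in p \iff g_j\phi\in p_0,\ j<m\}$ is a basic clopen neighborhood of $p_0$ on which the counting fraction is constant, and the triangle inequality gives the desired estimate. The construction of the $g_j$'s is naturally a matter of applying Fact \ref{VC-theorem} to the family $\mathcal{F}_\phi = \{A_{\phi,p} : p\text{ $f$-generic}\}$ of subsets of $G/G^{00}$, which has finite VC-dimension by Corollary \ref{cor: g A_phi is VC}, relative to the Haar measure $h_0$. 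The samples $\bar g_j$ produced by the VC theorem (with any choice of representatives in $G$) will satisfy the displayed inequality.

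The main obstacle is the measurability hypothesis (2)--(3) in Fact \ref{VC-theorem}, since Lemma \ref{lem_measurability} only handles countable subfamilies $\mathcal{F}_{\phi,S}$. I would address this by first reducing to a countable sublanguage via Lemma \ref{lem: reducing measures to countable sublanguage}: this preserves the value of $\mu_p(\phi)$ and, by Remark \ref{rem: G/G^{00} is Polish}, makes $G/G^{00}$ a Polish group. Then, mirroring the analytic-sets argument in Lemma \ref{lem_measurability} but now letting $p$ vary through a suitable Polish parameter space (a fragment of $S_G^{f\text{-gen}}(\M)$ captured by a countable sub-model of $\M$ carrying the parameters involved), the relevant sup functions become projections of Borel subsets of a product of Polish spaces, hence analytic, hence universally measurable with respect to $h_0$. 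This suffices to apply Fact \ref{VC-theorem} to the full family $\mathcal{F}_\phi$ and produce the uniform $\varepsilon$-net.

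The hard part is exactly this upgrade from the countable case of Lemma \ref{lem_measurability} to the uncountable one: an alternative to the descriptive-set-theoretic route would be to observe that, since the VC theorem is applied in $G/G^{00}$ equipped with a fixed Haar measure, and since $f$-generic types collapse under $\mu_{\bullet}$ onto Borel subsets of $G/G^{00}$ modulo $h_0$-null sets, one can instead restrict to a countable family of representatives dense (in the $L^1(h_0)$ sense) for $\mathcal{F}_\phi$ and apply Lemma \ref{lem_measurability} verbatim. Either way, once uniform approximation is established, the continuity assertion follows immediately by the triangle-inequality argument above.
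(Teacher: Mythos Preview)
Your reduction to pointwise continuity of $p\mapsto\mu_p(\phi)$ is correct, and the idea of using a VC $\varepsilon$-net $(g_j)$ to define a clopen neighborhood is exactly the right shape. The gap is in the ``uniform approximation'' step: you need $(g_j)$ that works simultaneously for \emph{all} $f$-generic $p$, and neither of your two suggested routes delivers this. For the descriptive-set-theoretic route, the parameter space of $f$-generic types (or even of $M$-invariant $\psi$-types with $\psi(x,y)=\phi(y^{-1}x)$) is not a Polish space: it is compact Hausdorff but typically not second countable, since it sits inside $S_\psi(\M)$ with $\M$ uncountable. Fact \ref{fac: seq compactness} tells you it is a Rosenthal compactum, which is strictly weaker than Polish, so you cannot simply project Borel sets through it and land in the analytic class. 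For the $L^1$-density route, being $L^1(h_0)$-close controls $h_0(A)$ but not the pointwise averages $\Av(\bar g_0,\ldots,\bar g_{m-1};A)$, so an $\varepsilon$-net for a countable $L^1$-dense subfamily need not be an $\varepsilon$-net for the whole family. In short, the paper's Lemma \ref{lem_measurability} and Proposition \ref{prop: countable approximation} are stated for countable $S$ for a reason, and upgrading them to all $f$-generic types is not a formality.

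The paper's proof sidesteps this entirely by \emph{not} seeking a uniform $\varepsilon$-net. Instead it fixes $q$ in the closure of $Y=\{p:\mu_p(\phi)\geq r\}$, reduces to a countable sublanguage and a countable model $M$, passes to $M$-invariant $\psi$-types via Proposition \ref{prop: f-gen iff M-inv, locally}, and then invokes the Rosenthal-type sequential compactness of Fact \ref{fac: seq compactness} to extract a \emph{countable sequence} $(p_i')$ in (the $\psi$-restriction of) $Y$ converging to $q'$. Lifting back to global $f$-generic $p_i\in Y$, one now applies the countable approximation of Proposition \ref{prop: countable approximation} to the countable set $S=\{q\}\cup\{p_i\}$: for large $i$ the finitely many truth values $g_j\phi\in p_i$ agree with those of $q$, whence $\mu_q(\phi)\approx\mu_{p_i}(\phi)\geq r$. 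The key new ingredient you are missing is precisely this sequential compactness, which is what lets the argument stay within the countable regime where measurability is already established.
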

\begin{proof}
Fix $\phi\left(x\right)\in L_G\left(\M\right)$ and $r\in\left[0,1\right]$,
and let $Y$ be the set of all global $f$-generic $p \in S_G(\M)$ with $\mu_{p}\left(\phi\left(x\right)\right)\geq r$.
It is enough to show that $Y$ is closed. Let $q$ belong to the closure of $Y$, in particular $q$ is $f$-generic. Let $L_0$ be some countable language such that $G$ is $L_0$-definable and $\phi(x) \in L_0(\M)$, and let $T_0 = T|_{L_0}$.

Now let $M$ be some countable model of $T_0$ over which $\phi(x)$ is defined, and let $\psi(x,y) = \phi(y^{-1}x)$. Let $q'(x) = q|_{\psi}$, i.e., the restriction of $q$ to all formulas of the form $g \cdot \phi(x), \neg g \cdot \phi(x)$, $g \in G$, and let $Y' = \{ p|_{\psi} : p \in Y \}$. By Lemma \ref{lem: reducing measures to countable sublanguage}, $q'$ and all elements of $Y'$ are $f$-generic in the sense of $T_0$. By Lemma \ref{prop: f-gen iff M-inv, locally} applied in $T_0$ we know that $q'$ and all elements of $Y'$ are $M$-invariant. Working in $T_0$, let $\Inv_{\psi}(M)$ be the space of all global $\psi$-types invariant over $M$. It follows from the assumption that $q' \in \overline{Y'}$ (i.e., the closure of $Y'$ in the sense of the topology on $\Inv_\psi(M)$).

By Fact \ref{fac: seq compactness}  we know that $q'$ is a limit of a countable sequence $(p'_i : i < \omega)$ of types from $Y'$. Each of $p'_i$ is $f$-generic in $T_0$, so in $T$ as well (easy to verify using equivalence to $G$-dividing both in $T$ and $T_0$), and extends to some global $f$-generic $L$-type $p_i \in Y$ by Corollary  \ref{prop_ideal}. 

Now work in $T$, and let $\varepsilon > 0$ be arbitrary. By Proposition \ref{prop: countable approximation}, with $S = \{ q \} \cup \{ p_i : i < \omega \}$, there are some $g_0, \ldots, g_m \in G$ such that $\mu_{p_i}(\phi(x)) \approx^{\varepsilon} \Av(g_j \phi(x) \in p_i)$ for all $i < \omega$, as well as $\mu_q(\phi(x)) \approx^{\varepsilon} \Av(g_j \phi(x) \in q)$. As for any $g \in G$, $g \phi(x) \in p_i \iff g \phi(x) \in p'_i$, and the same for $q,q'$, it follows that for all $i < \omega$ large enough we have $\bigwedge_{j<m} (g_j \phi(x) \in q \iff g_j \phi(x) \in p_i)$. But this implies that for any $\varepsilon > 0$, $\mu_{q}\left(\phi\left(x\right)\right)\geq r-\varepsilon$, and so
$\mu_{q}\left(\phi\left(x\right)\right)\geq r$ and $q\in Y$.
\end{proof}

\begin{cor} \label{cor: implications of f continuous}
\begin{enumerate}
\item The set $\{ \mu_p : p \mbox{ is } f \mbox{-generic} \}$ is closed in the set of all $G$-invariant measures.
\item Given a $G$-invariant measure $\mu$, the set of f-generic types $p$ for which $\mu_p = \mu$ is a subflow.
\end{enumerate}
\end{cor}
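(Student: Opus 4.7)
The plan is to derive both parts directly from the continuity of $p \mapsto \mu_p$ established in Proposition \ref{prop: p to mu_p is cont}, combined with a couple of already-recorded facts.

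For part (1), I would first note that the set $F \subseteq S_G(\M)$ of global $f$-generic types is closed: by Proposition \ref{prop:f-gen iff G00-inv} it is exactly the set of $G^{00}$-invariant types, and $G^{00}$-invariance is a closed condition (for each $g \in G^{00}$ and each $\phi(x) \in L_G(\M)$, the set $\{p : g\phi(x) \in p \leftrightarrow \phi(x) \in p\}$ is clopen, and we intersect over all such $g, \phi$). Alternatively, the complement is the union over all non-$f$-generic $\phi(x)$ of the clopen set $[\phi(x)]$, which is open since by Corollary \ref{prop_ideal} the non-$f$-generic formulas form an ideal. Hence $F$ is compact, its image under the continuous map $p \mapsto \mu_p$ is compact and therefore closed inside the Hausdorff space of $G$-invariant measures.

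For part (2), fix a $G$-invariant measure $\mu$ and let $Z = \{p \in F : \mu_p = \mu\}$. Since $\{\mu\}$ is closed in the (Hausdorff) space of measures and $p \mapsto \mu_p$ is continuous on $F$ by Proposition \ref{prop: p to mu_p is cont}, the set $Z$ is closed in $F$, hence closed in $S_G(\M)$. For $G$-invariance, take $p \in Z$ and $g \in G$. Then $gp$ is again $f$-generic (by Proposition \ref{prop:f-gen iff G00-inv}, since $f$-genericity is preserved under the $G$-action), and by Remark \ref{rem: InvarianceOfMup}(1) we have $\mu_{gp} = \mu_p = \mu$. Therefore $gp \in Z$, so $Z$ is a closed $G$-invariant subset of $S_G(\M)$, i.e., a subflow.

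Neither part should present any real obstacle beyond assembling the right ingredients; the continuity statement in Proposition \ref{prop: p to mu_p is cont} is the substantive content, and this corollary is a clean topological packaging. The only minor care needed is to observe that the set of $f$-generic types is itself closed in $S_G(\M)$ so that restricting the continuous map to it yields compact image and closed fibers.
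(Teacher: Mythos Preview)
Your proof is correct and takes essentially the same approach as the paper, which simply says ``This follows from Proposition \ref{prop: p to mu_p is cont}.'' You have merely unpacked the implicit topological steps (closedness of the set of $f$-generic types, compactness of its image, and closedness of fibers together with $G$-invariance from $\mu_{gp}=\mu_p$), all of which are exactly what is intended.
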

\begin{proof}
This follows from Proposition \ref{prop: p to mu_p is cont}.
\end{proof}

\begin{thm} \label{thm: egodic iff mu_p}
Let $G$ be definably amenable. Then regular ergodic measures on $S_G(\M)$ are exactly the measures of the form $\mu_p$ for some f-generic $p \in S_G(\M)$.
\end{thm}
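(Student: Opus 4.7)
The plan is to establish the two inclusions separately, with the easy direction handled directly from the support characterization and the hard direction via the converse Krein--Milman theorem (Fact \ref{fac: KreinMilman}).

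First I would show that every $\mu_p$ with $p$ an $f$-generic type is ergodic. Suppose $\mu_p = \alpha \nu_1 + (1-\alpha)\nu_2$ with $0 < \alpha < 1$ and $\nu_1, \nu_2$ $G$-invariant (regular) measures. Then clearly $S(\nu_i) \subseteq S(\mu_p)$ for $i = 1, 2$, and by Remark \ref{rem: InvarianceOfMup}(2) we have $S(\mu_p) \subseteq \overline{G \cdot p}$. Hence $S(\nu_i) \subseteq \overline{G\cdot p}$, and Corollary \ref{cor: mu_p approx via support} applied to $\nu_i$ yields $\nu_i = \mu_p$. Thus $\mu_p$ is an extreme point of the compact convex set of $G$-invariant (regular) measures on $S_G(\M)$, i.e., it is ergodic (Fact \ref{def: ergodic}).

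For the converse, let $\mathcal{I}$ denote the compact convex set of $G$-invariant regular Borel probability measures on $S_G(\M)$ (a closed convex subset of $C^*(S_G(\M))$ with its weak$^*$ topology, which coincides with the logic topology), and let $S = \{ \mu_p : p \in S_G(\M) \text{ is } f\text{-generic} \}$. By Corollary \ref{cor: implications of f continuous}(1), the set $S$ is closed in $\mathcal I$. I claim that the closed convex hull $\overline{\conv}\, S$ equals $\mathcal{I}$: indeed, by Lemma \ref{lem_ApproxInvMeasByMuP}, any $\mu \in \mathcal{I}$ can be approximated, uniformly on every single formula $\phi(x,y)$ and to arbitrary precision $\varepsilon > 0$, by a finite convex combination $\frac{1}{n} \sum_{i<n} \mu_{p_i}$ with the $p_i$ $f$-generic. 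Since the weak$^*$ topology on $\mathcal{I}$ (equivalently, the logic topology, see Remark \ref{rem: TopOnMeas}) is generated by such formula-wise convergence, this shows $\mu \in \overline{\conv}\, S$.

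Now I would apply Fact \ref{fac: KreinMilman} to the compact convex set $\mathcal{I}$ and the subset $S$: from $\mathcal{I} = \overline{\conv}\, S$ it follows that the closure $\overline{S}$ contains every extreme point of $\mathcal{I}$, i.e., every regular ergodic measure. Since $S$ is already closed, every regular ergodic measure lies in $S$ itself, completing the proof. The main technical input is the closedness of $S$ (which ultimately rests on the continuity statement in Proposition \ref{prop: p to mu_p is cont}); the rest is a clean application of the converse Krein--Milman theorem once one has the approximation lemma in hand.
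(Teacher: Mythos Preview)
Your proposal is correct and follows essentially the same approach as the paper: the forward direction uses $S(\nu_i)\subseteq S(\mu_p)\subseteq \overline{G\cdot p}$ together with Corollary \ref{cor: mu_p approx via support}, and the converse combines Lemma \ref{lem_ApproxInvMeasByMuP} with the converse Krein--Milman theorem and the closedness of $S$. The only cosmetic difference is that the paper applies Fact \ref{fac: KreinMilman} with $C=\overline{\conv}\,S$ (noting that an extreme point of $\mathcal I$ lying in $\overline{\conv}\,S$ remains extreme there) rather than first arguing $\overline{\conv}\,S=\mathcal I$; also, the paper pauses to verify that the $\nu_i$ inherit regularity from $\mu_p$, which you assume.
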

\begin{proof}
Fix a global $f$-generic type $p$, and assume that $\mu_p$ is not an extreme point. Then there is some $0 < t < 1$ and some $G$-invariant measures $\mu_1 \neq \mu_2$ such that $\mu_p = t \mu_1 + (1-t) \mu_2$. First, it is easy to verify using regularity of $\mu_p$ that both $\mu_1$ and $\mu_2$ are regular. 
Second, it follows that $S(\mu_1), S(\mu_2) \subseteq S(\mu_p) \subseteq \overline{Gp}$. By Corollary \ref{cor: mu_p approx via support}  which we may apply as $\mu_1, \mu_2$ are regular, it follows that $\mu_1 = \mu_p = \mu_2$, a contradiction.

%
%
%
Now for the converse, let $\mu$ be an arbitrary regular $G$-invariant measure which is an extreme point, and let $S = \{ \mu_p : p \in S_G(\M) \mbox{ is } f \mbox{-generic} \}$. Let $\overline{\conv}S$ be the closed convex hull of $S$. By Lemma \ref{lem_ApproxInvMeasByMuP}, $\mu$ is a limit of the averages of measures from $S$, so  $\mu \in \overline{\conv}S$ and it is still an extreme point of $\overline{\conv}S$. Then we actually have $\mu \in \overline{S}$ (by Fact \ref{fac: KreinMilman}, as (1) is automatically satisfied for $C = \overline{\conv}S$, then (2) holds as well). But $\overline{S} = S$ by Corollary \ref{cor: implications of f continuous}(1).
\end{proof}

\begin{cor}
The set of all regular ergodic measures in $S_G(\M)$ is closed.
\end{cor}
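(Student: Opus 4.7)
The proof plan is essentially a one-line citation chase, since all the heavy lifting has already been done. By Theorem \ref{thm: egodic iff mu_p}, the set of regular ergodic measures in $\mes_G(\M)$ coincides exactly with $\{\mu_p : p \in S_G(\M) \text{ is } f\text{-generic}\}$. Thus closedness of the set of regular ergodic measures is the same assertion as closedness of this latter set, which is precisely Corollary \ref{cor: implications of f continuous}(1).

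To make the logical dependencies explicit, I would present the argument as follows. First note that the space $\mes_G(\M)$ is compact Hausdorff in the weak$^*$ / logic topology (Remark \ref{rem: TopOnMeas}), and the set of $G$-invariant measures is a closed subset of it. The set of global $f$-generic types is a closed subset of $S_G(\M)$ (it is cut out by the collection of $f$-generic formulas, or equivalently by $G^{00}$-invariance via Proposition \ref{prop:f-gen iff G00-inv}). By Proposition \ref{prop: p to mu_p is cont}, the assignment $p \mapsto \mu_p$ is a continuous map from this compact set into $\mes_G(\M)$, so its image is compact and hence closed. Combined with Theorem \ref{thm: egodic iff mu_p}, this identifies the set of regular ergodic measures as a closed subset of $\mes_G(\M)$, which is what was to be shown. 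There is no real obstacle here: the work was in establishing Proposition \ref{prop: p to mu_p is cont} (continuity of $p \mapsto \mu_p$, via the countable approximation Lemma \ref{lem_measurability}) and in Theorem \ref{thm: egodic iff mu_p} (the classification of ergodic measures via Krein--Milman and Corollary \ref{cor: mu_p approx via support}), and the corollary is just the formal consequence.
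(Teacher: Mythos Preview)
Your proposal is correct and matches the paper's intended argument: the corollary is stated without proof precisely because it follows immediately from Theorem \ref{thm: egodic iff mu_p} together with Corollary \ref{cor: implications of f continuous}(1). Your expanded explanation via compactness of the set of $f$-generic types and continuity of $p\mapsto\mu_p$ is exactly how Corollary \ref{cor: implications of f continuous}(1) itself was obtained.
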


Let $\FGen$ denote the closed $G$-invariant set of all $f$-generic types in $S_G(\M)$. By Proposition \ref{prop:f-gen iff G00-inv} we have a well-defined action of $G/G^{00}$ on $\FGen$ (not necessarily continuous, or even measurable). If $\nu$ is an arbitrary regular $G$-invariant measure, then $S(\nu) \subseteq \FGen$ by Proposition \ref{lem_PosGInvMeasImpliesFGen}, and we can naturally view $\nu$ as a $G/G^{00}$-invariant measure on Borel subsets of $\FGen$.

\begin{problem}
Consider the action $f: G/G^{00} \times \FGen \to \FGen, (g,p) \mapsto g\cdot p$. Is it measurable? It is easy to see that $f$ is continuous for a fixed $g$ and measurable for a fixed $p$. In many situations this is sufficient for joint measurability of the map, but our case does not seem to be covered by any result in the literature.
\end{problem}

\section{Generic compact domination and the Ellis group conjecture}

\subsection{Baire-generic compact domination}

Let $G = G(\M)$  be a definably amenable NIP group, and let $M$ be a small model of $T$. Let $p\in S_{G}\left(\M\right)$
be a global type strongly $f$-generic over $M$. Let $\pi : G \to G/G^{00}$ be the canonical projection. It naturally lifts to a continuous map $\pi: S_G(\M) \to G/G^{00}$.
Fix a formula $\phi\left(x\right)\in L_G\left(\M\right)$, and we define
$U_{\phi\left(x\right)}=\left\{ g/G^{00}:g\cdot p\vdash\phi\left(x\right)\right\} \subseteq G/G^{00}$.
\begin{prop}\label{prop: UisConstructable}
The set $U = U_{\phi\left(x\right)}$ is a constructible subset of $G/G^{00}$
(namely, a Boolean combination of closed sets).\end{prop}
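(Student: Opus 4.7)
The plan is to apply Borel-definability of the $M$-invariant type $p$ (as recalled just before Fact \ref{fac: seq compactness}) to the formula $\psi(x, y) := \phi(y \cdot x)$, and then push the resulting decomposition down to $G/G^{00}$. First I would observe that, since $p$ is strongly $f$-generic over $M$, Fact \ref{fac: BasicDefAm}(2) gives $\Stab_G(p) = G^{00}$; hence the preimage
\[
V := \pi^{-1}(U) = \{g \in G(\M) : g \cdot p \vdash \phi(x)\} = \{g : \psi(x, g) \in p\}
\]
satisfies $V \cdot G^{00} = V$. After enlarging $M$ to contain the parameters of $\phi$ (preserving strong $f$-genericity, hence $M$-invariance, of $p$), the Borel-definability analysis applied to $\psi$ produces some $N < \omega$ and $M$-type-definable subsets $A_n(y), B_n(y) \subseteq \M$ (with the built-in nestedness $A_{n+1} \subseteq A_n$, $B_{n+1} \subseteq B_n$) such that
\[
V = \bigcup_{n < N} \bigl(A_n \wedge \neg B_{n+1}\bigr).
\]

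Next, I would show that $\pi$ sends type-definable subsets of $G$ to closed subsets of $G/G^{00}$. Indeed, $\pi$ extends to a continuous surjection $\hat\pi : S_G(\M) \to G/G^{00}$ between compact Hausdorff spaces, so for any type-definable $F \subseteq G$ (corresponding to a closed subset of the Stone space), the image $\pi(F)$ is a compact and hence closed subset of $G/G^{00}$; equivalently, $\pi^{-1}(\pi(F)) = F \cdot G^{00}$ is type-definable, which is precisely the condition for $\pi(F)$ to be closed in the logic topology. In particular, every $\pi(A_n \cap G)$ and $\pi(B_n \cap G)$ is closed.

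Finally, I would combine these ingredients to express $U$ as a Boolean combination of closed sets. Working with the $G^{00}$-saturated approximants $\tilde A_n := A_n \cdot G^{00}$ and $\breve B_n := \bigcap_{h \in G^{00}} B_n \cdot h^{-1}$, both of which are type-definable and $G^{00}$-saturated (the latter being an intersection of type-definable sets indexed by a type-definable group, hence type-definable), the aim is to verify
\[
V = \bigcup_{n < N} \bigl(\tilde A_n \wedge \neg \breve B_{n+1}\bigr),
\]
using the $G^{00}$-saturation of $V$, the nestedness, and the finite bound $N$ from NIP. Pushing this equality down through $\pi$ would then exhibit $U$ as a finite Boolean combination of the closed sets $\pi(\tilde A_n), \pi(\breve B_{n+1}) \subseteq G/G^{00}$, proving constructibility.

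The hard part is precisely this last rewriting. The individual $A_n, B_n$ from Borel-definability need not be $G^{00}$-saturated, and direct projection of the Boolean combination fails because $\pi(A \setminus B) \neq \pi(A) \setminus \pi(B)$ in general. Overcoming this requires exploiting that it is $V$, and not each piece, that is $G^{00}$-saturated, together with the nested structure $A_{n+1} \subseteq A_n$, $B_{n+1} \subseteq B_n$ and the finiteness of $N$, to rewrite the decomposition inside the Boolean algebra generated by the $G^{00}$-saturated type-definable sets.
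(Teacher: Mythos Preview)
Your overall strategy---apply Borel-definability of $p$ to obtain $V = \bigcup_{n<N}(A_n \wedge \neg B_{n+1})$, replace $A_n, B_n$ by $G^{00}$-saturated type-definable approximants, and push down via $\pi$---is exactly the paper's approach. The gap is in the specific choice of approximant for $B_n$: you use the \emph{inner} saturation $\breve B_n = \bigcap_{h\in G^{00}} B_n h^{-1}$, whereas the paper uses the \emph{outer} saturation $B'_n := B_n \cdot G^{00}$ (the same as for $A_n$). With your choice both terms grow, $\tilde A_n \supseteq A_n$ and $\neg\breve B_{n+1} \supseteq \neg B_{n+1}$, so you only get the inclusion $V \subseteq \bigcup_n(\tilde A_n \wedge \neg\breve B_{n+1})$ for free. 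The reverse inclusion need not hold: membership in $\tilde A_n \wedge \neg\breve B_{n+1}$ says only that \emph{some} $gk_1\in A_n$ and \emph{some} (possibly different) $gk_2\notin B_{n+1}$, and nothing forces a single element of the coset to lie in $A_n\wedge\neg B_{n+1}$. Concretely, a coset $gG^{00}$ disjoint from $S$ and from $A_{n+1}$ may still contain $h_1\in A_n$ (forcing $h_1\in B_{n+1}$ since $h_1\notin S$) together with an $h_2\notin B_{n+1}$; then $g$ lies in your right-hand side but not in $V$. The nestedness $A_{n+1}\subseteq A_n$, $B_{n+1}\subseteq B_n$ alone does not rule this out.

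The fix is to take $B'_n = B_n\cdot G^{00}$. Then $\neg B'_{n+1}$ \emph{shrinks}, so the easy direction becomes $S' := \bigcup_n(A'_n\wedge\neg B'_{n+1}) \subseteq S$: if $g\in A'_n\wedge\neg B'_{n+1}$, pick $h\in gG^{00}\cap A_n$; since the whole coset avoids $B_{n+1}$ we get $h\in A_n\wedge\neg B_{n+1}\subseteq S$, hence $g\in S$. The nontrivial direction is now $S\subseteq S'$, and here one genuinely needs more than nestedness: if $g\in S\setminus S'$, take $n$ maximal with some $h\in gG^{00}$ in $A_n\wedge\neg B_{n+1}$; since $h\notin S'$ and $h\in A_n\subseteq A'_n$, there is $h'\in gG^{00}\cap B_{n+1}$; but $h'\in S$ forces $h'\in A_m\wedge\neg B_{m+1}$ with $m>n$ (as $h'\in B_{n+1}$ rules out $m\le n$), contradicting maximality. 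This is the paper's argument, and it uses the specific alternation structure of the $A_n, B_n$ (namely that $h'\in B_{n+1}$ implies $h'\in B_{m+1}$ for all $m\le n$), not just monotonicity.
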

\begin{proof}
Note that $U=\pi\left(S\right)$ with $S=\left\{ g\in G:\phi\left(gx\right)\in p\right\} $.

As explained in Section \ref{sec: forking} we have
$S=\bigcup_{n<N}\left(A_{n}\land\neg B_{n+1}\right)$ for some $N< \omega$,
where:

\[
\mbox{Alt}_n (x_0, \ldots, x_{n-1}) = \bigwedge_{i < n - 1} \neg \left( \phi(g x_i) \leftrightarrow \phi (g x_{i+1}) \right)\mbox{,}
\]

\[
A_{n}=\{ g\in G:\exists x_{0}\ldots x_{n-1}(p^{\left(n\right)}|_{M}\left(x_{0},\ldots,x_{n-1}\right) \land 
 \mbox{Alt}_{n}(x_0, \ldots, x_{n-1})
\land \]
\[\land \phi\left(gx_{n-1}\right))\} \mbox{,}
\]
\[
B_{n}=\{ g\in G:\exists x_{0}\ldots x_{n-1}(p^{\left(n\right)}|_{M}\left(x_{0},\ldots,x_{n-1}\right)\land
\mbox{Alt}_n (x_0, \ldots, x_{n-1})
\land \]
\[\land \neg\phi\left(gx_{n-1}\right))\} \mbox{.}
\]

Note that $A_{n},B_{n}$ are type definable (over $M$ and the parameters of $\phi(x)$). Define

\if0

\[
A_{n}'=\left\{ g\in G:\exists x_{0}\ldots x_{n-1},y\left(p^{\left(n\right)}|_{M}\left(x_{0},\ldots,x_{n-1}\right)\land G^{00}\left(g^{-1}y\right)\land\left(\bigwedge_{i<n-1}\neg\left(\phi\left(yx_{i}\right)\leftrightarrow\phi\left(yx_{i+1}\right)\right)\right)\land\phi\left(yx_{n-1}\right)\right)\right\} \mbox{,}
\]

\[
B_{n}'=\left\{ g\in G:\exists x_{0}\ldots x_{n-1},y\left(p^{\left(n\right)}|_{M}\left(x_{0},\ldots,x_{n-1}\right)\land G^{00}\left(g^{-1}y\right)\land\left(\bigwedge_{i<n-1}\neg\left(\phi\left(yx_{i}\right)\leftrightarrow\phi\left(yx_{i+1}\right)\right)\right)\land\neg\phi\left(yx_{n-1}\right)\right)\right\} \mbox{.}
\]

\fi

\[
A_n' := \left\{ g \in G : \exists h \in G \left ( g^{-1}h \in G^{00} \land h \in A_n \right ) \right\} \mbox{,}
\]
\[
B_n' := \left\{ g \in G : \exists h \in G \left ( g^{-1}h \in G^{00} \land h \in B_n \right ) \right\} \mbox{.}
\]

These are also type-definable sets. Let $S'=\bigcup_{n<N}\left(A_{n}'\land\neg B_{n+1}'\right)$. We check that $S'=S$.
Note:
\begin{enumerate}
	\item $S$ is $G^{00}$-invariant (because $p$ is),
	\item all of $A'_n, B'_n, S'$ are $G^{00}$-invariant (by definition),
	\item $A_n \subseteq A'_n, B_n \subseteq B'_n$.
\end{enumerate}

 First, if $g\in S'$, say $g\in A'_n \wedge \neg B'_{n+1}$, then there is $h\in G$ such that $hg^{-1} \in G^{00}$ and $h\in A_n$. As $g\in \neg B'_{n+1}$, also $h\in \neg B'_{n+1}$, and so $h \in \neg B_{n+1}$ (by (2) and (3)). Hence $h\in S$, and by (1)  also $g\in S$. So $S' \subseteq S$.

Assume that $g\in S \setminus S'$, and let $n < N$ be maximal for which there is $h\in gG^{00}$ such that $h\in A_n \wedge \neg B_{n+1}$. Then for a corresponding $h$, we still have $h \in S \setminus S'$ by (1) and (2). In particular, $h \notin A'_n \land \neg B'_{n+1}$. As $h \in A_n \subseteq A'_n$, necessarily $h \in B'_{n+1}$. This means that there is some $h' \in h G^{00} = g G^{00}$ such that $h' \in B_{n+1}$. As $h'$ is still in $S$ by (1), it follows that $h' \in A_m \land \neg B_{m+1}$ for some $m$, but by the definition of the $B_n$'s this is only possible if $m+1 > n+1$, contradicting the choice of $n$. Thus $S = S'$.

Now, we have $\pi(S')=\pi(S)=\bigcup_{n<N} \pi(A'_n)\wedge \neg \pi(B'_{n+1})$ since $A'_n$ and $B'_n$ are all $G^{00}$-invariant. As $\pi(A'_n)$, $\pi(B'_n)$ are closed, we conclude that $\pi(S)$ is constructible.
\end{proof}

Let $C :=\overline{G\cdot p} \subseteq S_G(\M)$, and we define $$E_{\phi\left(x\right)}=\left\{ \bar h\in G/G^{00}:\pi^{-1}\left(\bar h\right)\cap\phi\left(x\right)\cap C\neq\emptyset\mbox{ and }\pi^{-1}\left(\bar h\right)\cap\neg\phi\left(x\right)\cap C\neq\emptyset\right\} \mbox{.}$$

\begin{rem}\label{rem: boundary has empty interior}
Let $X$ be an arbitrary topological space, and let $Y \subseteq X$ be a constructible set.
Then the boundary $\partial Y$ has empty interior. 
\end{rem}
\begin{proof}
This is easily verified as $Y$ is a Boolean combination of closed sets, $\partial (Y_1 \cup Y_2) \subseteq \partial Y_1 \cup \partial Y_2$ for any sets $Y_1,Y_2 \subseteq X$, and $\partial Y$ has empty interior if $Y$ is either closed or open.
\end{proof}

\begin{thm}
\label{thm:CompactDomination}
(Baire-generic compact domination)
The set $E_{\phi\left(x\right)}$ is closed and has empty interior. In particular
it is meagre.
\end{thm}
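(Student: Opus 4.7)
The plan is to identify $E_{\phi(x)}$ with the topological boundary of a constructible subset of $G/G^{00}$; since boundaries of constructibles have empty interior (Remark~\ref{rem: boundary has empty interior}), both assertions will follow immediately.

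Set $W := U_{\phi(x)} \cdot \pi(p) \subseteq G/G^{00}$. Since $U_{\phi(x)}$ is constructible by Proposition~\ref{prop: UisConstructable} and right translation is a self-homeomorphism of the topological group $G/G^{00}$, the set $W$ is again constructible. Using that $\pi(g \cdot p) = \bar g \cdot \pi(p)$, where $\bar g = gG^{00}$ (here $p$ is $G^{00}$-invariant, being strongly $f$-generic, so $g \cdot p$ depends only on $\bar g$), together with $g \cdot p \vdash \phi(x) \iff \bar g \in U_{\phi(x)}$, one verifies
\[
W \;=\; \pi\bigl((G \cdot p) \cap \phi(x)\bigr), \qquad G/G^{00} \setminus W \;=\; \pi\bigl((G \cdot p) \cap \neg \phi(x)\bigr).
\]

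Next, put $V_\phi := \pi(C \cap \phi(x))$ and $V_{\neg\phi} := \pi(C \cap \neg\phi(x))$. These are continuous images of compact subsets of $S_G(\M)$, hence closed in the Hausdorff space $G/G^{00}$; and by the very definition of $E_{\phi(x)}$,
\[
E_{\phi(x)} \;=\; V_\phi \cap V_{\neg\phi},
\]
so $E_{\phi(x)}$ is already closed. The heart of the argument is to show $\overline W = V_\phi$ and $\overline{G/G^{00} \setminus W} = V_{\neg\phi}$. The inclusions $\overline W \subseteq V_\phi$ and $\overline{G/G^{00}\setminus W} \subseteq V_{\neg\phi}$ are immediate from the identities for $W$ and closedness of $V_\phi$, $V_{\neg\phi}$. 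For the reverse, given $\bar h = \pi(q) \in V_\phi$ with $q \in C \cap \phi(x)$, density $C = \overline{G\cdot p}$ supplies a net $g_i \cdot p \to q$ in $S_G(\M)$; because $\phi(x)$ is clopen, eventually $g_i \cdot p \in \phi(x)$, so $\pi(g_i\cdot p) \in W$, and continuity of $\pi$ gives $\pi(g_i\cdot p) \to \bar h$. The case of $V_{\neg\phi}$ is symmetric. Combining everything,
\[
E_{\phi(x)} \;=\; V_\phi \cap V_{\neg\phi} \;=\; \overline{W} \cap \overline{G/G^{00} \setminus W} \;=\; \partial W.
\]

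Applying Remark~\ref{rem: boundary has empty interior} to the constructible set $W$ shows $\partial W$ has empty interior, and hence so does $E_{\phi(x)}$. A closed set with empty interior is nowhere dense, therefore meagre. The only potentially subtle step is the identification $\overline W = V_\phi$, which is where density of $G \cdot p$ inside the orbit closure $C$ is really needed; everything else is formal, leveraging the constructibility result of Proposition~\ref{prop: UisConstructable} already established.
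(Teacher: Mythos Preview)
Your proof is correct and follows essentially the same approach as the paper: both rely on Proposition~\ref{prop: UisConstructable}, the density of $G\cdot p$ in $C$, and Remark~\ref{rem: boundary has empty interior}. The only differences are cosmetic---you work with the translate $W = U_{\phi(x)}\cdot \pi(p)$ rather than normalizing $p$ to lie in $G^{00}$, and you establish the full equality $E_{\phi(x)} = \partial W$ whereas the paper proves only the inclusion $E_{\phi(x)} \subseteq \partial U_{\phi(x)}$ (which is all that is needed).
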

\begin{proof}
We have $E_{\phi(x)}=\pi(\phi(x)\cap C)\cap \pi(\neg \phi(x)\cap C)$ and $\phi(x)\cap C$, $\neg \phi(x) \cap C$ are closed subsets of $S_G(\M)$, hence $E_{\phi(x)}$ is closed.

\if0
It is easy to see that for a constructible set $X$, $\partial X$ always has empty interior (as $\partial (X \cup Y) \subseteq \partial X \cup \partial Y$ and $\partial X$ has empty interior if $X$ is closed or open). Thus, by Proposition \ref{prop: UisConstructable}, we only need to show that $E_{\phi(x)}$ is included in $\partial U_{\phi(x)}$ as defined above. 

Let $g$ be in the interior of $U$. Then there is a formula $\psi(x)$ which contains the inverse image in $S(\monster)$ of a neighborhood of $g$ such that for every $h \in \psi(x)$, $h \cdot p\vdash \phi(x)$. Then any translate of $p$ satisfies $\psi(x)\rightarrow \phi(x)$. Hence also any element in $C$ satisfies this. In particular, any type in $C$ projecting on $g$ satisfies $\phi(x)$. Hence, $E \cap U^{\circ} = \emptyset$. 

Applying the same reasoning to $\neg \phi(x)$ in the place of $\phi(x)$, we get $E \cap \left( U^c \right)^{\circ} = \emptyset$, so $E \subseteq \partial U$.
\fi
We may assume that $p$ concentrates on $G^{00}$, as replacing $p$ by $g \cdot p$ for some $g\in G(\M)$  does not change $C$, and thus does not change $E_{\phi(x)}$.

Let $\bar g \in E_{\phi(x)}$ be given, and let $V$ be an arbitrary open subset of $G/G^{00}$ containing $\bar g$. As the map $\pi$ is continuous, the set $S= \pi^{-1}(V)$ is an open subset of $S_G(\M)$. By the definition of $E_{\phi(x)}$, there must exist $q,q' \in C$ such that $\pi(q) = \pi(q') = \bar g$ and $q \in S \cap \phi(x), q' \in S \cap \neg \phi(x)$. As $C = \overline{G\cdot p}$, it follows that there are some $h, h' \in G(\M)$ such that $h \cdot p \in S \cap \phi(x)$ and $h' \cdot p \in S \cap \neg\phi(x)$. But then, as $p$ concentrates on $G^{00}$, $\pi(h) = \pi(h \cdot p) \in V \cap U$ and $\pi(h') = \pi(h' \cdot p) \in V \cap U^c$ (where $U = U_{\phi(x)}$ is as defined before Proposition \ref{prop: UisConstructable}). As $V$ was an arbitrary neigbourhood of $\bar g$, it follows that $\bar g \in \partial U$, hence $E_{\phi(x)} \subseteq \partial U$. 
By Proposition \ref{prop: UisConstructable}, $U$ is constructible. Hence $\partial U$ has  empty interior by Remark \ref{rem: boundary has empty interior}, and so $E_{\phi(x)}$ has empty interior as well.
\end{proof}

\subsection{Connected components in an expansion by externally definable sets}
Given a small model $M$ of $T$, an externally definable subset of $M$ is an intersection of an $L(\M)$-definable subset of $\M$ with $M$. One defines an expansion $M^{\ext}$ in a language $L'$ by adding a new predicate symbol for every externally definable subset of $M^n$, for all $n$. Recall that a global type $p\in S(\M)$ is finitely satisfiable in $M$ if $p$ lies in the topological closure of $M$, where $M$ is identified with its image in $S(\M)$ under the map sending $a\in M$ to the type $x=a$. There is a canonical bijection (even homeomorphism) between $S(M^{ext})$ and the subspace of types in $S(\M)$ finitely satisfiable in $M$. Recall also that a \emph{coheir} of a type $p\in S(M)$ is a type over a larger model $N$ which extends $p$ and is finitely satisfiable in $M$.

Let $T' = \Th_{L'}(M^{\ext})$. Note that automatically any quantifier-free $L'$-type over $M^{\ext}$ is definable (using $L'$-formulas). The following is a fundamental theorem of Shelah \cite{shelah2009dependent} (see also \cite{ExtDefI} for a refined version).
\begin{fact}\label{fac: Shelah expansion} Let $T$ be NIP, and let $M$ be a model of $T$. Then $T'$ eliminates quantifiers. 

It follows that $T'$ is NIP and that all ($L'$-)types over $M^{\ext}$ are definable.
\end{fact}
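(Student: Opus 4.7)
Plan: I would derive the theorem from Shelah's key combinatorial result, usually called \emph{honest definitions}: for every $L(\M)$-formula $\phi(x;c)$ with $x$ an $n$-tuple, there is a single $L$-formula $\psi(x,y)$ and some $d$ in an elementary extension $\M^+ \succ \M$ such that $\psi(\M^+, d) \cap M^n = \phi(\M, c) \cap M^n$. In other words, every externally definable subset of $M^n$ is the trace on $M$ of an $L$-definable set in a bigger monster, using parameters there. The combinatorial content of this lemma is pure NIP: for each finite $A \subseteq M^n$, the partition of $A$ into $\phi(\cdot, c)$-classes over $M$ is controlled by the VC-dimension of $\phi$, and Matousek's $(p,q)$-theorem (Fact~\ref{fac: p,q-theorem}) produces a cover of $A$ by a bounded number of complete $\phi$-types realized in $M$. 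A compactness argument over all finite $A$ then yields the uniform witness $\psi(x,y)$ and parameter $d \in \M^+$.

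With honest definitions at hand, QE proceeds by induction on formula complexity; the only interesting case is $\chi(x) = \exists y\, \theta(x,y)$ with $\theta$ quantifier-free in $L'$. Unfolding each atomic $L'$-predicate in $\theta$ back to its definition in $\M$ gives an $L$-formula $\theta'(x,y;c)$ with $c \in \M$ such that $M^{\ext} \models \theta(a,b) \Leftrightarrow \M \models \theta'(a,b;c)$ for $a,b \in M$. Hence $\chi$ defines in $M^{\ext}$ the trace on $M$ of $\exists y\, \theta'(x,y;c)$, and by the honest-definitions lemma applied to $\exists y\, \theta'(x,y;c)$, that trace equals the trace on $M$ of some $\psi(\M^+, d)$ with $\psi \in L$ and $d \in \M^+$. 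But any trace of this form is by definition externally definable, so it is named by an atomic predicate of $L'$ and therefore quantifier-free $L'$-definable.

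The two stated consequences now fall out. For NIP of $T'$: by QE every $L'$-formula is equivalent to a Boolean combination of $L$-atoms and $L'$-atoms, and both kinds of atoms have uniformly bounded VC-dimension (externally definable set families inherit the VC-dimension of the ambient $L$-formula under restriction from $\M$ to $M$). For definability of types over $M^{\ext}$: by QE a type $p \in S(M^{\ext})$ is determined by its quantifier-free part, and the latter is definable because for each $L'$-atom $R(x,y)$ (corresponding to some $\phi(x,y;c) \in L(\M)$), the set $\{b \in M : R(x,b) \in p\}$ is externally definable in $M$ (another application of honest definitions, now to $\phi$ with a realization of $p$ adjoined to the parameters), hence named by a predicate of $L'$. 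The main obstacle is honest definitions itself: one must ensure the cover produced by the $(p,q)$-theorem is \emph{uniform} in the finite set $A$, and that the compactness step lands in a single elementary extension within which one formula $\psi(x,y)$ and parameter $d$ simultaneously handle all finite $A \subseteq M^n$.
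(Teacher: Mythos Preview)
The paper does not prove this statement; it is recorded as a Fact with citations to \cite{shelah2009dependent} and \cite{ExtDefI}, so there is no paper-proof to compare against. Your outline follows the standard route (honest definitions, then QE, then the corollaries), but the QE step as written has a genuine gap.

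First, your formulation of honest definitions is too weak: ``there exist $\psi(x,y)\in L$ and $d\in\M^+\succ\M$ with $\psi(\M^+,d)\cap M^n=\phi(\M,c)\cap M^n$'' is trivially satisfied by $\psi=\phi$, $d=c$, $\M^+=\M$. The actual lemma carries an additional \emph{honesty} clause, namely that $\psi(x;d)$ implies $\phi(x;c)$ globally (equivalently, $\psi$ approximates the trace from the inside). This omission is not cosmetic; it is exactly what drives the existential step. Second, your claim that ``$\chi$ defines in $M^{\ext}$ the trace on $M$ of $\exists y\,\theta'(x,y;c)$'' is false: for $a\in M$ we have $M^{\ext}\models\chi(a)$ iff there is $b\in M$ with $\M\models\theta'(a,b;c)$, whereas the trace of $\exists y\,\theta'(x,y;c)$ allows the witness $b$ to range over all of $\M$. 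These sets differ in general. The correct argument applies honest definitions to $\theta'(x,y;c)$ in the joint variables $(x,y)$, obtaining $\psi(x,y;d)$ with $d$ in an elementary extension $(M^*,P^*)\succ(M,\theta'(M;c))$ of the pair, so that $\psi$ has the same trace on $M$ \emph{and} $\psi(M^*;d)\subseteq P^*$. Then for $a\in M$, any witness $b\in M^*$ to $\psi(a,b;d)$ lands in $P^*$, and elementarity of the pair pulls back a witness $b'\in M$; hence $\chi(M^{\ext})$ is the trace on $M$ of the $L(M^*)$-definable set $\exists y\,\psi(x,y;d)$ and is therefore externally definable. Without the honesty clause this reduction collapses.
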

Assume now that $G$ is an $L$-definable group, and let $\M'$ be a monster model for $T'$ such that $\M \restriction L$ is a monster for $T$. In general there will be many new $L'$-definable subsets and subgroups of $G(\M')$ which are not $L$-definable. In \cite{ChePilSim} it is demonstrated however that many properties of definable groups are preserved when passing to $T'$.
\begin{fact} \label{fac: preservation in Mext} Let $T$ be NIP and let $M$ be a small model of $T$. Let $G$ be an $L$-definable group. 
\begin{enumerate}
\item If $G$ is definably amenable in the sense of $T$, then it is definably amenable in the sense of $T'$ as well.
\item The group $G^{00}(\M)$ computed in $T$ coincides with $G^{00}(\M')$ computed in $T'$.
\end{enumerate}
\end{fact}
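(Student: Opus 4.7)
For part (1), I would invoke the characterization of definable amenability via strongly $f$-generic types (Fact \ref{fac: BasicDefAm}(3)): it suffices to produce a global $L'$-type in $S_G(\M')$ which is strongly $f$-generic over some small $L'$-submodel in the sense of $T'$. Start with a global $L$-type $p \in S_G(\M)$ that is strongly $f$-generic over $M$ in $T$, guaranteed by definable amenability of $G$ in $T$. Since $T'$ eliminates quantifiers (Fact \ref{fac: Shelah expansion}), a complete $L'$-type over $M^{\ext}$ is determined by its $L$-reduct together with the values of the predicates $P_D$; and since each $P_D$ is the trace on $M$ of some $L(\M)$-formula $\psi_D$, the type $p$ already knows how to evaluate them coherently. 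I would lift $p$ to some $L'$-type $p_0 \in S_{L'}(M^{\ext})$ and then extend $p_0$ to a global $L'$-type $p' \in S_G(\M')$ that is $M^{\ext}$-invariant (using the analogue of Remark \ref{rem: non-dividing extends to invariant} in $T'$).

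The substantive step is showing that $p'$ is strongly $f$-generic in $T'$: for any $g \in G(\M')$, the translate $g\cdot p'$ should be non-dividing over $M^{\ext}$ in $T'$. Suppose not, witnessed by some $L'$-formula $\phi'(x) \in g\cdot p'$ and an $M^{\ext}$-indiscernible sequence $(g_i)_{i<\omega}$ making $\{g_i\cdot \phi'(x)\}$ inconsistent. Using QE in $T'$ and the fact that $P_D$-predicates pull back to $L(\M)$-formulas, I would reduce this configuration to an $L$-inconsistent system of $L(\M)$-translates of a formula in some translate of $p$, contradicting the strong $f$-genericity of $p$ in $T$. \emph{This reduction is the main obstacle}: one has to verify that $L'$-dividing does not genuinely refine $L$-dividing on the types appearing in translates of $p'$.

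For part (2), the inclusion $G^{00}(\M') \subseteq G^{00}(\M)$ (viewed inside the common ambient group) is immediate, because every $L$-type-definable bounded-index subgroup is automatically $L'$-type-definable. For the reverse, I would show that any $L'$-type-definable subgroup $H \leq G(\M')$ of bounded index contains $G^{00}_T$. Equivalently, the natural continuous surjection of compact groups $G/G^{00}_{T'} \twoheadrightarrow G/G^{00}_T$ coming from the inclusion is an isomorphism. One approach is to use part (1) together with the canonical lifted measure $\mu_{p'}$ of Definition \ref{def: mu_p}, applied in $T'$ to a strongly $f$-generic $L'$-type $p'$ extending $p$ as above: the restriction of $\mu_{p'}$ to $L$-definable sets is a $G$-invariant $L$-measure, which by Fact \ref{fac: BasicDefAm}(2) must be $G^{00}_T$-strongly invariant, forcing $G^{00}_T \subseteq \Stab(\mu_{p'}) \cap G(\M) \subseteq G^{00}_{T'}$ in the appropriate sense. \emph{The main obstacle here is} that $L'$-type-definable sets are by QE intersections of boolean combinations of $L$-formulas and $P_D$-predicates, and the $P_D$-predicates a priori could carve out new bounded-index subgroups; one must use the tameness of externally definable sets (they are traces of $L(\M)$-definable sets, so not genuinely new over $\M$) to rule this out.
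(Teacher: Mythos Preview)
The paper does not actually prove this statement: it is recorded as a \emph{Fact} and cited from \cite{ChePilSim}, so there is no ``paper's own proof'' to compare against. What I can do is assess whether your outline would succeed as an independent proof.

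For part (1), your strategy of lifting a strongly $f$-generic $L$-type to an $L'$-type and checking strong $f$-genericity in $T'$ is the natural one, and the obstacle you flag (that $L'$-dividing might in principle be finer than $L$-dividing) is exactly the point that needs work. You have correctly located where the content lies, though you have not actually carried out the reduction.

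For part (2) there is a genuine gap. Your argument invokes part (1) and the measure $\mu_{p'}$, but part (1) has the hypothesis that $G$ is definably amenable, whereas part (2) as stated does \emph{not}: it is asserted for an arbitrary $L$-definable group in an NIP theory. So your approach, at best, proves (2) only under the additional assumption of definable amenability. Even granting that assumption, the last step is unclear: you write ``forcing $G^{00}_T \subseteq \Stab(\mu_{p'}) \cap G(\M) \subseteq G^{00}_{T'}$'', but $\mu_{p'}$ is fully $G$-invariant, so its stabilizer is all of $G$ and gives no information. If instead you meant $\Stab(p')$, note that $\Stab_{L'}(p') = G^{00}_{T'}$ and $\Stab_{L}(p'|_L)\supseteq G^{00}_{T'}$ trivially; what you would need is that every $g\in G^{00}_T$ stabilizes the \emph{full} $L'$-type $p'$, not just its $L$-reduct, and nothing in your outline secures this.

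The argument in \cite{ChePilSim} for (2) does not pass through amenability or measures; it works directly with the structure of externally definable sets (via honest definitions) to show that no new bounded-index type-definable subgroups appear in $T'$.
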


In particular this implies that $G/G^{00}$ is the same group when computed in $T$ or in $T'$. Note also that the logic topology on $G/G^{00}$ computed in $T$ coincides with the logic topology computed in $T'$: any open set in the sense of $T$ is also open in the sense of $T'$ and both are compact Hausdorff topologies, therefore they must coincide.

\begin{rem}
In view of Remark \ref{rem: G/G^{00} is Polish},  if $L$ is countable then $G/G^{00}$ is still a Polish space with respect to the $L'$-induced logic topology.
\end{rem}

\subsection{Ellis group conjecture} \label{sec: Ellis group conj}

We recall the setting of definable topological dynamics and enveloping semigroups (originally from \cite[Section 4]{New4}, but we are following the notation from \cite{ChePilSim}).

Let $M_0$ be a small model of a theory $T$, and assume that all types over $M_0$ are definable. Then $G(M_0)$ acts on $S_G(M_0)$ by homeomorphisms, and the identity element $1$ has a dense orbit. The set $S_G(M_0)$ admits a natural semigroup structure $\cdot$ extending the group operation on $G(M_0)$ and continuous in the first coordinate: for $p,q \in S_G(M_0)$, $p \cdot q$ is $\tp(a\cdot b /M_0)$ where $b$ realizes $q$ and $a$ realizes the unique coheir of $p$ over $M_0b$. This semigroup is precisely the enveloping Ellis semigroup of $(S_G(M_0),G(M_0))$ (see e.g. \cite{Glasner1}). In particular left ideals of $(S_G(M_0),\cdot)$ are precisely the closed $G(M_0)$-invariant subflows of $G(M_0)\curvearrowright S_G(M_0)$, there is a minimal subflow $\mathcal{M}$ and there is an idempotent $u \in \mathcal{M}$. Moreover, $u \cdot \mathcal{M}$ is a subgroup of the semigroup $(S_G(M_0), \cdot)$ whose isomorphism type does not depend on the choice of $\mathcal{M}$ and $u \in \mathcal{M}$. It is called the Ellis group (attached to the data).
The quotient map from $G = G(\M)$ to $G/G^{00}_{M_0}$ factors through the tautological map $g \mapsto \tp(g/M_0)$ from $G$ to $S_G(M_0)$, and we let $\pi$ denote the resulting map from $S_G(M_0) \to G/G^{00}_{M_0}$. It is a surjective semigroup homomorphism, and for any minimal subflow $\mathcal{M}$ of $S_G(M_0)$ and $u \in \mathcal{M}$, the restriction of $\pi$ to $u \cdot \mathcal{M}$ is a surjective group homomorphism.

Now, let $T$ be NIP, and let $M$ be an arbitrary model. Then we consider $M_0 := M^{\ext}$, an expansion of $M$ by naming all externally definable subsets of $M^n$ for all $n \in \mathbb{N}$, in a new language $L'$ extending $L$. Then $T' = \Th_{L'}(M_0)$ is still NIP, and all $L'$-types over $M_0$ are definable (by Fact \ref{fac: Shelah expansion}), so the construction from the previous paragraph applies to $(S_G(M_0),G(M_0))$.
Let $\M'$ be a monster model for $T'$, so that $\M = \M' \restriction L$ is a monster model for $T$.
By Fact \ref{fac: preservation in Mext}, if $G(\M')$ is definably amenable in the sense of $T$, then it remains definably amenable in the sense of $T'$, and $G^{00}(\M) = G^{00}(\M')$ (the first one is computed in $T$ with respect to $L$-definable subgroups, while the second one is computed in $T'$ with respect to $L'$-definable subgroups). Newelski asked in \cite{New4} if the Ellis group was equal to $G/G^{00}$ for some nice classes of groups. Gismatullin, Penazzi and Pillay \cite{gismatullin2012some} show that this is not always the case for NIP groups ($SL_2(\mathbb R)$ is a counterexample). The following modified conjecture was then suggested by Pillay (see \cite{ChePilSim}):

\medskip
\noindent
{\bf Ellis group conjecture:} Suppose $G$ is a definably amenable NIP group. Then the restriction of $\pi : S_G(M_0) \to G/G^{00}$ to $u \cdot \mathcal{M}$ is an isomorphism, for some/any minimal subflow $\mathcal{M}$ of $S_G(M_0)$ and idempotent $u \in \mathcal{M}$ (i.e., $\pi$ is injective).

\begin{thm}
The Ellis group conjecture is true, i.e., $\pi: u \cdot \mathcal{M} \to G/G^{00}$ is an isomorphism.
\end{thm}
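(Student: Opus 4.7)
The plan is to work inside the Shelah expansion $T' = \Th_{L'}(M^{\ext})$ and apply the Baire-generic compact domination of Theorem \ref{thm:CompactDomination} to derive a Baire-category contradiction from the assumption that the kernel $K = \ker(\pi|_{u\mathcal{M}})$ is non-trivial. By Fact \ref{fac: preservation in Mext}, passage to $T'$ preserves definable amenability, NIP, $G^{00}$, and the logic topology on $G/G^{00}$, and by Fact \ref{fac: Shelah expansion} every type over $M_0$ is definable in $T'$, so each $q \in S_G(M_0)$ has a unique global coheir $\tilde q \in S_G(\M')$. Since every $q \in \mathcal{M}$ is almost periodic, hence weakly generic, Proposition \ref{prop_WGenImpliesFGen} applied in $T'$ ensures that $\tilde q$ is a global $f$-generic type of $T'$. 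Set $C := \overline{G(\M') \cdot \tilde u}$; using continuity of Ellis multiplication in the first coordinate together with the identity $p = u \cdot p$ for $p \in u\mathcal{M}$, the coheir map $p \mapsto \tilde p$ embeds $u\mathcal{M}$ into $C$ in a $\pi$-equivariant way, so Theorem \ref{thm:CompactDomination} applies and produces a closed meagre set $E_\psi \subseteq G/G^{00}$ for every $\psi \in L'(\M')$.

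Next I would observe that the multiple-fiber locus
$$F := \{\bar g \in G/G^{00} : |\pi^{-1}(\bar g) \cap u\mathcal{M}| \geq 2\}$$
is invariant under the transitive left action of $G/G^{00}$ on itself, since left multiplication inside the Ellis group $u\mathcal{M}$ is a bijection commuting with $\pi$. Hence $F \in \{\emptyset,\, G/G^{00}\}$, and if $K \neq \{u\}$ then $F = G/G^{00}$. Assuming without loss of generality that $L$ and $M$ are countable (the Ellis group is independent of $M$, as noted after the statement), I would pass via Lemma \ref{lem: reducing measures to countable sublanguage} to a countable sublanguage $L'_0 \subseteq L'$ in which, by Remark \ref{rem: G/G^{00} is Polish} (adapted to $T'$), $G/G^{00}$ is Polish. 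Then $E := \bigcup_{\psi \in L'_0(M_0)} E_\psi$ is a countable union of closed meagre sets, hence meagre. The strategy is to establish $F \subseteq E$, which together with $F = G/G^{00}$ would contradict the Baire category theorem for the Polish space $G/G^{00}$.

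The main obstacle, and the step requiring the most care, is the choice of $L'_0$ so that for every $\bar g \in F$ some $\psi \in L'_0(M_0)$ separates two distinct elements of $\pi^{-1}(\bar g) \cap C$ coming from $u\mathcal{M}$. The naive enumeration fails because $|K|$ can exceed $\aleph_0$. Instead, I would fix a specific $p \in K \setminus \{u\}$ together with an $L'(M_0)$-formula $\phi$ with $\phi \in p$ and $\neg\phi \in u$, and then exploit the translate structure: for each $q \in u\mathcal{M}$ the distinct coheirs $\tilde q$ and $\widetilde{q \cdot p}$ in $C$ should be separated by an $L'_0(M_0)$-formula built from $\phi$ by $G(M_0)$-translation and by Ellis multiplication with $\tilde p$, a uniformity plausibly extracted from the VC-type bounds of Lemma \ref{lem: A_phi is VC} and Corollary \ref{cor: g A_phi is VC} together with the Borel-definability of $\tilde p$ recalled in Section \ref{sec: forking}. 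Once $L'_0$ is arranged so that such a separating formula always lies inside $L'_0(M_0)$, the inclusion $F \subseteq E$ is immediate and the Baire category theorem delivers the required contradiction, forcing $K = \{u\}$ and hence $\pi|_{u\mathcal{M}}$ to be an isomorphism.
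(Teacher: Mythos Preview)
Your setup---passing to $T'$, invoking Baire-generic compact domination, and observing that the fibers of $\pi|_{u\mathcal{M}}$ are all translates of the kernel so that $F$ is $\emptyset$ or $G/G^{00}$---is sound and matches the paper's framework. The genuine gap is exactly where you flag it: the choice of a countable family of formulas $\psi$ with $F\subseteq\bigcup_\psi E_\psi$. Your proposed uniformization does not go through. Fixing $p\in K$ and $\phi$ with $\phi\in p$, $\neg\phi\in u$, the pair $(q,q\cdot p)$ is separated over $M_0$ by a formula depending on $q$; concretely, if $b\models p$, $c\models u$ and $a$ realizes the coheir of $q$ over $M_0bc$, then $\phi(a^{-1}x)$ separates $ab$ from $ac$, but $a$ varies with $q$ and lies outside $M_0$. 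Right multiplication by $p$ in the Ellis semigroup is \emph{not} continuous, so there is no reason the separating formulas should fall into any countable set, and neither the VC-bounds of Lemma~\ref{lem: A_phi is VC} nor Borel-definability of $\tilde p$ supply such a bound (they control shattering and truth-value alternation, not the number of parameter instances needed). There is also a circularity: you invoke independence of the Ellis group from $M$ to reduce to countable $L,M$, but that independence is a \emph{corollary} of the theorem you are proving; and even with $L,M$ countable, $L'=L(M^{\ext})$ is typically of size $2^{\aleph_0}$, so $L'(M_0)$ is not countable to begin with.

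The paper sidesteps the countability issue entirely by a different endgame. Rather than covering $G/G^{00}$ by countably many $E_\psi$, it fixes a single pair $p_1,p_2\in u\mathcal{M}$ with $\pi(p_1)=\pi(p_2)$, chooses $r_{\bar g}\in\mathcal{M}$ with $\pi(r_{\bar g})=\bar g$ for each $\bar g$, and sets $r=\lim_{\mathcal{F}'}r_{\bar g}$ for an ultrafilter $\mathcal{F}'$ on $G/G^{00}$ extending the comeagre filter. Then for \emph{each formula $\phi$ separately}, meagreness of $E_\phi$ together with left-continuity of the semigroup operation forces $r\cdot p_1$ and $r\cdot p_2$ to agree on $\phi$; since $\phi$ was arbitrary, $r\cdot p_1=r\cdot p_2$. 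Finally, the Ellis-semigroup fact that left multiplication by any $r\in\mathcal{M}$ is injective on $u\mathcal{M}$ yields $p_1=p_2$. The key difference is that the ultrafilter absorbs the ``for all $\phi$'' quantifier, so no enumeration of formulas and no Polish assumption is needed.
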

\begin{proof}
Fix notations as above. Throughout this proof, we work in $T'$. Let $p\in S_{G}\left(\M '\right)$ be strongly $f$-generic
over $M_0$. Let $C :=\overline{G\cdot p}$, and let $V :=\left\{ p|_{M_0}:p\in C\right\} $.
Note that $V$ is a subflow of $G\left(M_0\right)\curvearrowright S_{G}\left(M_0\right)$:
it is closed as a continuous image of a compact set $C$ into a Hausdorff
space, and it is $G\left(M_0\right)$-invariant as $C$ is $G\left(\M'\right)$-invariant.
Let $\mathcal{M}$ be a minimal subflow of $V$. It has to be of the
form $\overline{G\left(M_0\right)\cdot\left(p'|_{M_0}\right)}$ for some
$p'\in C$. So replacing $p$ by $p'$ (which is still strongly $f$-generic
over $M_0$) we may assume that $\mathcal{M}=\overline{G\left(M_0\right)\cdot\left(p|_{M_0}\right)}$
is minimal.

Let $u\in\mathcal{M}$ be an idempotent. We will show that if $p_{1},p_{2}\in u\cdot\mathcal{M}$
and $\pi\left(p_{1}\right)=\pi\left(p_{2}\right)$, i.e., they determine
the same coset of $G^{00}$, then there is some $r\in\mathcal{M}$
such that $r\cdot p_{1}=r\cdot p_{2}$. By the general theory of Ellis
semigroups (see e.g. \cite[Proposition 2.5(5)]{Glasner1}) this will imply that $p_{1}=p_{2}$,
as wanted.

Let $\mathcal{F}$ be the filter of comeagre subsets of $G/G^{00}$, and let $\mathcal{F}'$ be
some ultrafilter extending it. Let $q_{1},q_{2}\in C$ be some global
types extending $p_{1},p_{2}$ respectively. For each $\bar g\in G/G^{00}$,
let $r_{\bar g}\in S_{G}\left(M_0\right)$ be a type in $\mathcal{M}$ with
$\pi\left(r_{\bar g}\right)= \bar g$. Let $r=\lim_{\mathcal{F}'}r_{\bar g}$. Note
that $r\in\mathcal{M}$.

Let $\M^*\succ\M'$ be a larger monster of $T'$. Let $a_{i}\in\M^*$ be such
that $a_{i}\models q_{i}$ for $i=1,2$. For each $\bar g\in G/G^{00}$
let $r_{\bar g}'$ be the unique coheir of $r_{\bar g}$ over $\M^*$,
and let $b_{\bar g}\models r_{\bar g}'|_{\M'a_1 a_2}$. Finally, let  $r'= \lim_{\mathcal{F}'} r'_{\bar g}$,
the unique coheir of $r$ over $\M^*$, and let $b\in\M^*$ realize $r'|_{\M'a_1a_2}$.

\smallskip
\note{Claim 1.} $\lim_{\mathcal{F}'}\tp\left(b_{\bar g}\cdot a_{i}/\M' \right)=\tp\left(b\cdot a_{i}/\M' \right)$
for $i=1,2$.

This follows by left continuity of the semigroup operation, but we give
the details. Let $\phi\left(x\right)\in L'\left(\M'\right)$ be arbitrary,
and let $a'_i \in \M'$ be such that $a'_i \models q_{i}|_{N}$, where $N \succeq M_0$ is some small model over
which $\phi\left(x\right)$ is defined. Then we have:
\begin{equation*}
\begin{split}
\phi(x) \in \lim_{\mathcal{F}'} \left( \tp( b_{\bar g} \cdot a_i / \M') \right) 
\Leftrightarrow 
\left\{ \bar g \in G/G^{00} : \models \phi(b_{\bar g} \cdot a_i) \right\} \in \mathcal{F}'
\Leftrightarrow
\\
\Leftrightarrow 
\left\{ \bar g \in G/G^{00} : \models \phi(b_{\bar g} \cdot a'_i) \right \} \in \mathcal{F}'
\Leftrightarrow
\phi(x \cdot a'_i) \in \lim_{\mathcal{F}'} \left( \tp(b_{\bar g}/ \M') \right) \subseteq r'
\Leftrightarrow
\\
\Leftrightarrow
\phi(x \cdot a_i) \in r'
\Leftrightarrow
\models \phi( b \cdot a_i).
\end{split}
\end{equation*}

The second equivalence is by $M_0$-invariance of $r'_{\bar g}$, and the fourth one is by $M_0$-invariance of $r'$.

\smallskip
\note{Claim 2.} $r\cdot p_{1}=r\cdot p_{2}$.

Assume not, say there exists some $\phi\left(x\right)\in L' \left(\M'\right)$
such that $\phi\left(x\right)\in r\cdot p_{1}$, $\neg\phi\left(x\right)\in r\cdot p_{2}$,
so $\models\phi\left(b\cdot a_{1}\right)\land\neg\phi\left(b\cdot a_{2}\right)$ (according to the choice of $a_1,a_2,b$ and the definition of the semigroup operation on $S_G(M_0)$).
We may assume that both $q_{1}$ and $q_{2}$ concentrate on $G^{00}$.
By Claim 1 we have $\left\{ \bar g\in G/G^{00}:\models\phi\left(b_{\bar g}\cdot a_{1}\right)\land\neg\phi\left(b_{\bar g}\cdot a_{2}\right)\right\} \in\mathcal{F}'$.
As $E_{\phi\left(x\right)}\subseteq G/G^{00}$ is meagre by Theorem
\ref{thm:CompactDomination}, we have $\left(E_{\phi\left(x\right)}\right)^c \in\mathcal{F}'$,
and so there is some $\bar{g} \notin E_{\phi\left(x\right)}$ such that $\models\phi\left(b_{\bar g}\cdot a_{1}\right)\land\neg\phi\left(b_{\bar g}\cdot a_{2}\right)$.

For an arbitrary open set $V \subseteq G/G^{00}$ containing $\bar g$, we can choose $h \in G(\M')$ such that $\pi(h) \in V$ and $\phi\left(h \cdot a_{1}\right)\land\neg\phi\left(h \cdot a_{2}\right)$ holds. Indeed, let $S = \pi^{-1}(V) \subseteq S_G(\M')$, which is open by continuity of $\pi$. Then there is an $L'(\M')$-definable set $\psi(x) \subseteq S$ such that $\pi(\psi(x)) \subseteq V$ and $\models \psi(b_{\bar g})$. By finite satisfiability of $r'_{\bar g}$, take $h \in G(\M')$ satisfying $\phi\left(x \cdot a_{1}\right)\land\neg\phi\left(x \cdot a_{2}\right) \land \psi(x)$. As $\bar g \notin E_{\phi(x)}$ and $E_{\phi(x)}$ is closed by Theorem \ref{thm:CompactDomination}, we find such an $h$ with $\pi(h) \notin E_{\phi(x)}$.

Note that $\pi\left(h \cdot a_{1}\right)=\pi\left(h \right)=\pi\left(h \cdot a_{2}\right)$
as $q_{1},q_{2}$ concentrate on $G^{00}$, and that $\tp(h \cdot a_1 / \M') = h \cdot q_1 \in C, \tp(h \cdot a_2 / \M') = h \cdot q_2 \in C$. It follows that $\pi(h) \in E_{\phi(x)}$ --- a contradiction.
\end{proof}

\begin{cor}
In a definably amenable NIP group, the Ellis group of the dynamical system $(S_G(M^{ext}), G(M))$ is independent of the model $M$.
\end{cor}

\if0
\subsection{Generic compact domination and the Ellis group conjecture}


From now on, we consider the case of $X=G$, for simplicity.

\begin{conjecture}
Let $C=\overline{G\cdot p}$ and $\pi$ be as above. Then for any definable set $X$, the set $E_X = \{g\in G/G^{00} : \pi^{-1}(g)\cap X \neq \emptyset$ and $\pi^{-1}(g)\cap X^{c}\neq \emptyset\}$ has Haar measure 0.
\end{conjecture}

By NIP3, we know that this is equivalent to the fact that $\mu_p$ is the only $G^{00}$-invariant measure concentrating on $\overline{Gp}$ and lifting the Haar measure from $G/G^{00}$.

To prove the Ellis group conjecture, we do not need the full strength of this statement. All we need is that the sets $E_X$ are small in some sense, i.e., lie in some (fixed) non-trivial ideal. The Lebesgue ideal does not seem to work, so the next natural choice is the Baire ideal.

Let $p$ be global type strongly $f$-generic over a countable model
$M$, let $\phi(x)$ be a definable set, and let $U =  {\bar g \in G/G^{00} : g \cdot p \vdash \phi(x)}$.

\begin{lem}
$U$ is a constructible subset of $G/G^{00}$ (with respect to the logic topology).
\end{lem}

\if0

\begin{lem}
Let $\phi(x)$ be a definable set and write $U=\{g\in G/G^{00} : g\cdot p\vdash \phi(x)\}$. Then $\partial U$ has empty interior.
\end{lem}
\begin{proof}
Let $\pi_0: S_G(M)\to G/G^{00}$. Then we know from NIP that $\pi_0^{-1}(U)$ is constructible. It is easy to see that for a constructible set $X$, $\partial X$ always has empty interior (as $\partial (X \cup Y) \subseteq \partial X \cup \partial Y$ and $\partial X$ has empty interior if $X$ is closed or open). It is not clear if one can conclude that $U$ is constructible (or even that $\partial U$ has empty interior in general). However, it is easy to check from definitions that if $f : X \to Y$ is a continuous open surjection and $Z \subseteq Y$, then $\partial(f^{-1}(Z)) = f^{-1}(\partial Z)$.


\smallskip
Now consider the set $C=\overline{G\cdot p}$ as a subset of $S(\monster)$ and let $K = C \cap G^{00}$. Then $C=K \times G/G^{00}$ topologically. Let $\pi$ be the projection from $C$ to $G/G^{00}$; it is continuous and open. Then $\pi^{-1}(U)$ is constructible (as the restriction of a constructible set on $S(\monster)$). Hence the observation applies and we conclude that $\partial U$ has empty interior.
\end{proof}

\fi

If $S_{M,G}(\M)$ is the space of global types finitely satisfiable in $M$, then its enveloping Ellis semigroup is $E(M) = (S_{M,G}(\M), \cdot)$ where $p \cdot q = a \cdot b$ for some/any $b \models q, a \models p|_{\M b}$. This operation is left-continuous. (Minimal) subflows of $(G, S_{G,M}(\M))$ correspond to (minimal) left ideals of $E(M)$.

Let ${\mathcal M}$ be a minimal ideal in $E(M)$, and let $u \in {\mathcal M}$ be an idempotent. Then $u \cdot {\mathcal M}$ is a group, and it doesn't depend on ${\mathcal M}$ and $u$. We call it the Ellis group (attached to the data). There is a natural surjective group homomorphism $u \cdot {\mathcal M} \to G/G^{00}$.

~

{\bf Ellis group conjecture:} If $G$ is a definably amenable NIP group, then the Ellis group is isomorphic to $G/G^{00}$ (i.e., this homomorphism is injective).

~ 

Recall that for a small model $M$, $F_M: S_M^{i}(\M) \to S_M(\M)$ denotes a continuous retraction from the space of global $M$-invariant types onto the set of global types finitely satisfiable in $M$, which in addition commutes with $M$-definable maps.
\begin{thm}
 The Ellis group conjecture holds.
\end{thm}

\begin{proof}

Let $\widetilde{p} \in S_G(\M)$ be a global type strongly f-generic over $M$, then $C = \overline{G\widetilde{p}} \subseteq S_{M,G}^{i}(\M)$, and let $V = F_M(\overline{G\widetilde{p}}) \subseteq S_{M,G}(\M)$. We claim that $V$ is a subflow. It is closed as a continuous image of a compact space, and it is $S_{M,G}(\M)$-invariant as $g F_M(q) = F_M(gq)$ for any $g \in M$, and $G(M)$ is a dense subset of $S_{M,G}(\M)$.

Let ${\mathcal M}$ be a minimal subflow of $V$, it has to be of the form $\overline{G \cdot F_M(p')}$ for some $p' \in \overline{Gp}$, so replacing $p$ by $p'$ we may assume that actually ${\mathcal M}=\overline{G \cdot F_M(p)}$ is minimal.  Let $u \in {\mathcal M}$ be an idempotent.

From the general theory of Ellis semigroups we know that for any two points $p \neq q \in u\cdot {\mathcal M}$ and any $r \in  {\mathcal M}$, $rp \neq rq$. We will show that if $p,q \in C$ are in the same coset of  $G^{00}$, then $F_M(p)$ and $F_M(q)$ are proximal (inside ${\mathcal M}$), so $p = q$ by the previous sentence, and this proves that the homomorphism is surjective.

Let ${\mathcal F}$ be the filter of co-meagre sets on $G/G^{00}$. Then it contains $\left( E_X \right)^{c}$ for every definable set $X$ (see the definition above). For every $g \in G/G^{00}$, pick some type $r_g \in S_{M,G}(\M) $ in $\overline{Gp}$ concentrating on $gG^{00}$. We define $r(x) = \lim_{{\mathcal F}} r_g$. Then $r(x)$ is finitely satisfiable in $M$, and moreover $r \in {\mathcal M}$. It follows that $rp = \lim_{\mathcal F} r_g p$ and $rq = \lim_{\mathcal F} r_g q$ by left continuity.

\smallskip
\note{Claim 1.} $r \cdot p = r \cdot q$.

Assume that $\phi(x) \in rp$, i.e., $\{ g \in G/G^{00} : gp \vdash \phi(x) \} \in {\mathcal F}$. On the other hand, as $p G^{00} = q G^{00}$, the set $\{ g \in G/G^{00} : gp \vdash \phi \} \triangle \{ g \in G/G^{00} : gq \vdash \phi(x) \} \subseteq E_{\phi(x)} \in {\mathcal I}$, the ideal of meagre subsets of $G/G^{00}$. But then $\{ g \in G/G^{00} : gq \vdash \phi(x) \} \in {\mathcal F}$, which implies that $\phi(x) \in rq$.

\smallskip
\note{Claim 2.} $r \cdot F_M(p) = F_M(r \cdot p)$, and similarly for $q$.

For any $u \in S_M(\M)$ and $p \in S_M^{i}(\M)$, $u \otimes F_M(p) = F_M(u \otimes p)$. Consider the map $f: \M ^{2} \to M, (a,b) \mapsto a \cdot b$, it is $M$-definable. We have $F_M(u \cdot p) = F_M(f(u \otimes p)) = f(F_M(u \otimes p)) = f ( u \otimes F_M(p) ) = u \cdot F_M(p)$.

\smallskip
So $F_M(r \cdot p) = F_M(r \cdot q)$, thus $r \cdot F_M(p) = r \cdot F_M(q)$.
\end{proof}

\fi

\if0
\section{Non-forking extensions of types}

In this section, we adapt some of the results above replacing the definably amenable group $G$ with an automorphism group $\Aut(\bdd(A)/A)$ acting on the non-forking extensions of a type $p_0\in S(A)$.

\subsection{Non-forking and $bdd(A)$-invariance}

Recall that if $A\subset \M$ is any small set, we let $\bdd(A)$ be the set of bounded hyperimaginaries over $A$. Hence we have $a \equiv_{\bdd(A)} b$ if $a$ and $b$ are $E$-equivalent for any type-definable over $A$ bounded equivalence relation $E$. A type $p\in S(\M)$ is $\bdd(A)$-invariant if for any formula $\phi(x,y)$ and tuples $a \equiv_{\bdd(A)} b$ we have $p\vdash \phi(x,a) \iff p\vdash \phi(x,b)$. Finally, the group $\Aut(\bdd(A)/A)$, called the compact Lascar group over $A$, is a compact Hausdorff group.

We recall here some results from \cite{NIP2} about forking and $bdd(A)$ in NIP theories. See also \cite[Chapter 5]{SimBook}.

\begin{fact}\label{fact:bdd}
Let $A$ be any small set and $p\in S(\M)$. Then $p$ does not fork over $A$ if and only if it is $bdd(A)$-invariant. In this case, it is moreover Borel-definable over $\bdd(A)$.

The group $\Aut(\bdd(A)/A)$ acts (by homeomorphisms) on the set of non-forking extensions of $p$.
\end{fact}

We fix some small set $A$ which we assume to be an extension base (so forking=dividing over $A$, see \cite{CheKap} or \cite[Chapter 5]{SimBook}). We also fix a type $p_0(x) \in S(A)$. As $A$ is an extension base, $p_0$ does not fork over $A$. Given $f \in \Aut(\M/A)$ and $\phi(x) \in L(\M)$, we write $f \phi(x)$ for $f(\phi(\M)) \subseteq \M$.

In this setting, we make the following definition.
\begin{defi}
A formula $\phi(x)$ is $f$-generic if $\{ \phi(x) \} \cup p_0(x)$ does not fork (equiv. divide) over $A$.
\end{defi}

The set of $f$-generic formulas is an $A$-invariant ideal.

Let $h_0$ be the normalized Haar probability measure on the group $\Aut(\bdd(A)/A)$. It is shown in \cite{NIP2} how to associate to every global non-forking extension $p$ of $p_0$ an invariant measure $\mu_p$: similarly as in the group case, we define $\mu_p(\phi(x)) = h(\{ g \in \Aut(\bdd(A)/A) \colon g\cdot p \vdash \phi(x) \})$. 

This set is measurable by Borel definability of $p$ over $\bdd(A)$, and it is easy to see that $\mu_p$ is an $A$-invariant measure concentrated on $p_0$.

The following is a reformulation of Definition \ref{def: weak generics, etc} with respect to the action of the automorphism group, from \cite{NewPetr}.

\begin{defn}
A set $\phi(x) \in L(\M)$ is \emph{$c$-free (with respect to $p_0$) } if there are some $f_i \in \Aut(\M/A), i<n$ such that $p_0(\M) \subseteq \bigcup_{i\leq n} f_i \phi(\M)$. It is \emph{weakly $c$-free} if for
some $\psi(x)$ which is not $c$-free, $\phi(x)\lor\psi(x)$ is $c$-free over $M$. A (partial) type is (weakly) $c$-free if every formula it implies is (weakly) $c$-free.
\end{defn}

As before, the family of all weakly $c$-free definable sets is an $\Aut(\M/A)$-invariant ideal of subsets of $p_0(\M)$, and every partial weakly $c$-free type extends to a complete one.

\begin{prop}
The following are equivalent with respect to the type $p_0 \in S(A)$:
\begin{enumerate}
\item $\phi(x)$ is $f$-generic.
\item $\phi(x)$ is weakly $c$-free.
\item $\mu(\phi(x))>0$ for some $A$-invariant measure $\mu$.
\end{enumerate}
\end{prop}
\begin{proof}
(2) $\Rightarrow$ (1): Exactly as the proof of Proposition \ref{prop_WGenImpliesFGen}.

(1) $\Rightarrow$ (2),(3): Assume that $\phi(x)$ does not $G$-divide. Then following the proof of Proposition \ref{prop: non-G-div extends to AP} we can find some $q_0, \ldots, q_{n-1} \in S_{p_0}(\M)$ almost periodic (with respect to the action of $\Aut(\M/A)$) and such that for any $f \in \Aut(\M/A)$, $f\phi(x) \in q_i$ for some $i<n$. So we have (2) already. Now by (2) implies (1) we know that $q_i$'s are $f$-generic, this implies that $\mu_{q_i}(\phi(x)) > \frac{1}{n}$ for some $i<n$, so we get (3).

(3) $\Rightarrow$ (1): By Fact \ref{fact_indmeasures} we see that if $\mu(\phi(x)) > 0$ for some $A$-invariant measure $\mu$, then $\phi(x)$ does not divide over $A$.
\end{proof}

\begin{rem}
This gives a positive answer to a question of Newelski and Petrykowski in the case of extensible NIP theories.\marginpar{Which question?}
\end{rem}

We end this section by giving a counterexample to Question \ref{question} in this context.

\begin{ex}
We consider the binary branching tree: $M=(2^{<\omega},<)$, where $a<b$ holds if and only if $a$ is an initial segment of $b$. In an elementary extension $N\succ M$, take $a\in N\setminus M$. Let $B$ be the branch $\{x\in M : x<a\}$ defined by $a$. Let $p_0$ be the unique non-algebraic 1-type over $\emptyset$. Then $B$ is f-generic but not weakly c-free. It has positive measure for no invariant measure extending $p_0$.
\end{ex}
\fi
\section{Further remarks}
\subsection{Left vs. right actions}\label{sec_leftright}

Until now, we have only considered the action of the group $G$ on itself by left-translations. One could also let $G$ act on the right and define analogous notions of right-f-generic, right-invariant measure etc. In a stable group, a type is left-generic if and only if it is right-generic so we obtain nothing new. However, in general, left and right notions may differ.

We start with an example of a left-invariant measure which is not right-invariant.

\begin{ex}
Let $G=(\mathbb R,+)\rtimes \{\pm 1\}$, where the two-element group $\{\pm 1\}$ acts on $\mathbb R$ by multiplication. Consider $G$ as a group defined in a model $R$ of RCF with universe $R \times \{-1,1\}$ and multiplication defined by $(x_0,\epsilon_0)\cdot (x_1,\epsilon_1)=(x_0+\epsilon_0x_1,\epsilon_0\epsilon_1)$. Let $p^{+}_{+\infty}(x,y)$ be the type whose restriction to $x$ is the type at $+\infty$ and which implies $y=1$. Define similarly $p^{-}_{-\infty}$. Then $\mu = \frac 1 2 \left ( p^{+}_{+\infty} + p^{-}_{-\infty} \right )$ is left-invariant, but not right-invariant.
\end{ex}

However, some things can be said.

\begin{lem}\label{lem_biinv}
Let $G = G(\M)$ be definably amenable, then there is always a measure on $G$ which is both left and right invariant.
\end{lem}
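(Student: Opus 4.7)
The plan is to produce a bi-invariant measure by convolving a left-invariant measure with a right-invariant companion, using the tensor product of $M$-invariant measures developed in Section~\ref{sec: Keisler measures}. First, I would invoke Fact~\ref{fac: BasicDefAm} to obtain a global strongly $f$-generic type $p$ over some small model $M$, and set $\mu := \mu_p$ as in Definition~\ref{def: mu_p}. Besides being left-$G$-invariant, $\mu$ is $M$-invariant: each $\sigma \in \Aut(\M/M)$ fixes $p$ and induces a continuous group automorphism of the compact group $G/G^{00}$, which preserves the Haar measure $h_0$ by uniqueness. Then I would define $\nu(\phi(x)) := \mu(\phi(x^{-1}))$; since inversion is $\emptyset$-definable, $\nu$ remains $M$-invariant, and applying left-invariance of $\mu$ to the formula $f(x) := \phi(x^{-1})$ yields $\nu(\phi(xg)) = \mu(f(g^{-1}x)) = \mu(f(x)) = \nu(\phi(x))$, so $\nu$ is right-$G$-invariant.

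With these in hand, I would set $\lambda(\phi(x)) := (\mu_x \otimes \nu_y)(\phi(xy))$, a global Keisler measure on $G$. For left-invariance, for each fixed $b$ the inner evaluation $\mu_x(\phi(g^{-1}xb))$ equals $\mu_x(\phi(xb))$ by left-invariance of $\mu$; integrating against $\nu$ gives $\lambda(g \cdot \phi) = \lambda(\phi)$. For right-invariance, I would view $F(b) := \mu_x(\phi(xb))$ as a Borel-definable function of $b$; right-translating $\phi$ by $g$ replaces $F$ by $b \mapsto F(bg)$, and right-$G$-invariance of $\nu$ should give $\int F(bg)\, d\nu(b) = \int F(b)\, d\nu(b) = \lambda(\phi)$.

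The main technical point will be this last step: transferring the right-invariance of $\nu$ as a Keisler measure (an identity on definable sets) to invariance of the integral of the Borel-definable integrand $F$. This should go through by standard measure theory, since right-translation by $g$ is a homeomorphism of $S_G(\M)$, so invariance on the clopen basis extends to the generated Borel $\sigma$-algebra and hence to all $\nu$-integrable Borel functions.
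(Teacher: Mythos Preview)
Your proposal is correct and follows essentially the same approach as the paper: take a left-$G$-invariant, $M$-invariant measure $\mu$, form its right-invariant companion $\mu^{-1}(X)=\mu(X^{-1})$ (your $\nu$), and define the bi-invariant measure as $\mu\otimes\mu^{-1}(\phi(x\cdot u))$ (your $\lambda$), checking left-invariance via the inner integral and right-invariance via the change-of-variables for the outer one. The only cosmetic differences are that the paper simply cites \cite[Lemma 5.8]{NIP2} for the existence of such a $\mu$ rather than constructing $\mu_p$ explicitly, and it carries out the integration over $S_G(N)$ for a small $N\ni g$ (which is where your ``invariance on the clopen basis extends to the Borel $\sigma$-algebra'' remark lands cleanly).
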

\begin{proof}
Let $\mu$ be a left invariant measure on $G$ which is also invariant over some small model $M$ (always exists in a definably amenable NIP group, e.g. by \cite[Lemma 5.8]{NIP2}).


Let $\mu^{-1}$ be defined by $\mu^{-1}(X) := \mu(X^{-1})$ for every definable set $X \subseteq G$, where $X^{-1} := \{a^{-1} : a \in X \}$. Then $\mu^{-1}$ is also a measure,  $M$-invariant (as $\mu^{-1}(\sigma(X)) = \mu(\sigma(X)^{-1}) = \mu(\sigma(X^{-1})) = \mu(X^{-1}) = \mu^{-1}(X)$  for any automorphism $\sigma \in \Aut(\M/M)$) and right invariant (as $\mu^{-1}(X \cdot g ) = \mu (g^{-1} \cdot X^{-1}) = \mu (X^{-1}) = \mu^{-1}(X)$ for any $g\in G$).

For any $\phi(x) \in L_G(\M)$, we define $\nu(\phi(x)) := \mu \otimes \mu^{-1} (\phi(x \cdot u))$. That is, for any definable set $X \subseteq G$ and a model $N$ containing $M$ and such that $X$ is $N$-definable, we have $\nu(X) = \int_{S_G(N)} f_X d\mu^{-1}$, where for every $q \in S_G(N)$, $f_X(q) = \mu(X \cdot h^{-1})$ for some/any $h \models q$ (well-defined by $M$-invariance of $\mu$, see Section \ref{sec: Keisler measures}). Then $\nu$ is an $M$-invariant measure, and given any $g \in G$ and  $N$ such that $g$ and $X$ are $N$-definable, for any $q \in S_G(N)$ and $h \models q$ we have:

\begin{enumerate}
	\item $f_{g \cdot X} (q) = \mu((g \cdot X) \cdot h^{-1}) = \mu(g \cdot (X \cdot h^{-1})) = \mu(X \cdot h^{-1}) = f_X(q)$, by left invariance of $\mu$.
	\item $f_{X \cdot g}(q) = \mu( (X \cdot g) \cdot h^{-1}) = f_{X}( q \cdot g^{-1})$, and $\int_{S_G(N)} f_X(q) d\mu^{-1} = \int_{S_G(N)} f_X(q \cdot g^{-1}) d (\mu^{-1} \cdot g) = \int_{S_G(N)} f_X(q \cdot g^{-1}) d (\mu^{-1})$ as $\mu^{-1} = \mu^{-1} \cdot g$ by right invariance.
\end{enumerate}
Hence $\nu$ is both left and right invariant.
\end{proof}

\begin{prop}
Let $G$ be definably amenable and let $\phi(x)\in L_G(\M)$. If $\phi(x)$ is left-generic, then it is right-f-generic.
\end{prop}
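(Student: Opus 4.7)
The plan is to use the bi-invariant measure provided by Lemma \ref{lem_biinv} as a bridge between the left and right notions. Note first that the entire theory developed in the paper (Proposition \ref{prop_Gdivide}, Theorem \ref{thm:AllGenericNotionsCoincide}, etc.) is stated for the left action of $G$ on itself, but the proofs only use that $G$ acts on itself by automorphisms of the definable structure; everything therefore carries over verbatim to give the corresponding ``right'' versions (right-$f$-generic iff does not right-$G$-divide iff has positive measure for some right-invariant measure). I will freely use this right-symmetric version of Theorem \ref{thm:AllGenericNotionsCoincide} below.

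First, apply Lemma \ref{lem_biinv} to obtain a Keisler measure $\nu$ on $G$ which is simultaneously left- and right-$G$-invariant. Since $\phi(x)$ is left-generic, by definition there are $g_0, \ldots, g_{n-1} \in G$ such that $G = \bigcup_{i<n} g_i \cdot \phi(x)$. Applying $\nu$, subadditivity and left-invariance of $\nu$ yield
\[
1 = \nu(G) \;\leq\; \sum_{i<n} \nu(g_i \cdot \phi(x)) \;=\; n \cdot \nu(\phi(x)),
\]
so $\nu(\phi(x)) \geq 1/n > 0$.

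Now apply the right-symmetric version of Theorem \ref{thm:AllGenericNotionsCoincide}: since $\nu$ is a right-$G$-invariant measure with $\nu(\phi(x)) > 0$, the formula $\phi(x)$ is right-$f$-generic, as desired.

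If one prefers to avoid invoking the right-symmetric version as a black box, the conclusion can be reproduced directly as follows. Suppose $\phi(x)$ right-$G$-divides, witnessed by an $M$-indiscernible sequence $(h_i)_{i<\omega}$ with $\{\phi(x)\cdot h_i : i<\omega\}$ $k$-inconsistent. By right-invariance of $\nu$ we have $\nu(\phi(x)\cdot h_i) = \nu(\phi(x)) \geq 1/n$ for every $i$, so Fact \ref{fact_indmeasures} gives consistency of $\{\phi(x)\cdot h_i : i<\omega\}$, a contradiction. Hence $\phi(x)$ does not right-$G$-divide, and therefore (by the right-analog of Proposition \ref{prop_Gdivide}) it is right-$f$-generic. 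The only real ``obstacle'' is the observation that the left-theory of the paper transfers symmetrically to the right action; the bi-invariance of $\nu$ from Lemma \ref{lem_biinv} does all the work.
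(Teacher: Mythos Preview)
Your proof is correct and follows essentially the same route as the paper's: obtain a bi-invariant measure from Lemma \ref{lem_biinv}, observe that left-genericity forces positive measure, and then invoke the right-hand counterpart of the theory (the paper cites the right analogue of Proposition \ref{lem_PosGInvMeasImpliesFGen} rather than Theorem \ref{thm:AllGenericNotionsCoincide}, but this is the same point). Your added direct argument via Fact \ref{fact_indmeasures} is a pleasant bonus, though note that to pass from ``does not right-$G$-divide'' to ``right-$f$-generic'' you still need right-definable-amenability, which you have since the bi-invariant $\nu$ is in particular right-invariant.
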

\begin{proof}
By the previous lemma, let $\mu(x)$ be a left and right invariant measure on $G$. Then as $\phi(x)$ is left-generic, we must have $\mu(\phi(x))>0$. But as $\mu$ is also right-invariant, this implies that $\phi(x)$ is right-f-generic (by the ``right hand side'' counterpart of Proposition \ref{lem_PosGInvMeasImpliesFGen}).
\end{proof}

As the following example shows, no other implication holds.

\begin{ex}
Let $R$ be a saturated real closed field and let $G=(R^{2},+)\rtimes SO(2)$ with the canonical action, seen as a definable group in $R$. For $0<a<1$ let $C_a\subset R^2$ be the angular region defined by $\{(x,y) : x\geq 0 ~ \&~  |y|\leq a\cdot x\}$. Finally, let $X_a = C_a \times SO(2) \subseteq G$. 

Note that any two translates of $C_a$ intersect. Hence any two right translates of $X_a$ intersect: let $g=(x_g,\sigma_g) \in G$, then $X_a\cdot g = \bigcup_{\tau\in SO(2)} (C_a+\tau(x_g)) \times \{\tau\cdot \sigma_g\}$; hence $X_a\cdot g \cap X_a$ is non-empty and in fact has surjective projection on $SO(2)$. This shows that $X_a$ is right-f-generic.

On the other hand, multiplying $X_a$ on the left has the effect of turning it: $g\cdot X_a= (x_g + \sigma_g(C_a))\times SO(2)$. If $a$ is infinitesimal, then there are infinitely many pairwise disjoint left-translates of $X_a$, hence $X_a$ is not left-f-generic. If however $a$ is not infinitesimal, then we can cover $R^2$ by finitely many $SO(2)$-conjugates of $C_a$, and hence cover $G$ by finitely many left-translates of $X_a$.

We conclude that if $a$ is infinitesimal, then $X_a$ is right-f-generic but not left-f-generic, and if $a$ is not infinitesimal, then $X_a$ is left-generic but not right-generic.
\end{ex}

\subsection{Actions on definable homogeneous spaces}

While the theory above was developed for the action of a definably amenable group $G$ on $S_G(\M)$, we remark that (with obvious rephrasements) it works just as well for a definably amenable group $G = G(\M)$ acting on $S_X(\M)$ for $X$ a definable homogeneous $G$-space (i.e. $X$ is a definable set, the graph of the action map $G\times X \to X$ is definable and the action is transitive). We show that given a definable homogeneous space $X$ for a definably amenable group $G$, every $G$-invariant measure on $G$ pushes forward to a $G$-invariant measure on $X$, and conversely any $G$-invariant measure on $X$ lifts to a $G$-invariant measure on $G$, possibly non-uniquely.

\begin{lem}\label{lem: extending measure by an ideal}
Let $B_0 \subseteq \Def(\M)$ be a Boolean algebra and let $I \subseteq \Def(\M)$ be an ideal such that $I \cap B_0$ is contained in the zero-ideal of $\nu_0$, a measure on $B_0$.

Let $B$ be the collection of all sets $U \in \Def(\M)$ for which there is some $V \in B_0$ such that $U \triangle V \in I$. Then $B$ is a Boolean algebra with $B_0, I \subseteq B$. Moreover, $\nu_0$ extends to a global measure $\nu$ on $\Def(\M)$ such that all sets from $I$ have $\nu$-measure $0$.
\end{lem}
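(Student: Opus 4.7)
The plan is to first check that $B$ is a Boolean algebra in which every set is approximated modulo $I$ by a unique element of $B_0$ up to $\nu_0$-null sets, then to define $\nu_1$ on $B$ by transferring $\nu_0$ through this approximation, and finally to extend $\nu_1$ to a global finitely additive measure on $\Def(\M)$ using a standard Hahn--Banach / Zorn extension result.

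First I would verify that $B$ is a Boolean algebra containing both $B_0$ and $I$. The inclusion $B_0 \subseteq B$ is witnessed by $V = U$ (giving $U \triangle V = \emptyset \in I$), and $I \subseteq B$ is witnessed by $V = \emptyset$. Closure of $B$ under complement is immediate from $U^c \triangle V^c = U \triangle V$, and closure under finite unions follows from the set-theoretic inclusion $(U_1 \cup U_2) \triangle (V_1 \cup V_2) \subseteq (U_1 \triangle V_1) \cup (U_2 \triangle V_2)$ together with the fact that $I$ is an ideal.

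Next I would define $\nu_1 : B \to [0,1]$ by $\nu_1(U) := \nu_0(V)$ for any $V \in B_0$ with $U \triangle V \in I$. Well-definedness is the key place where the hypothesis is used: if $V_1, V_2 \in B_0$ both approximate $U$ modulo $I$, then $V_1 \triangle V_2 \in I \cap B_0$, hence $\nu_0(V_1 \triangle V_2) = 0$ by hypothesis, so $\nu_0(V_1) = \nu_0(V_2)$. Finite additivity of $\nu_1$ reduces to finite additivity of $\nu_0$: given disjoint $U_1, U_2 \in B$ with approximants $V_1, V_2 \in B_0$, the set $V_1 \cap V_2$ lies in $I \cap B_0$ (since $V_1 \cap V_2 \subseteq (U_1 \triangle V_1) \cup (U_2 \triangle V_2)$ once $U_1 \cap U_2 = \emptyset$), so $\nu_0(V_1 \cap V_2) = 0$ and $\nu_1(U_1 \cup U_2) = \nu_0(V_1 \cup V_2) = \nu_0(V_1) + \nu_0(V_2) = \nu_1(U_1) + \nu_1(U_2)$. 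By construction $\nu_1$ vanishes on $I$ and extends $\nu_0$.

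Finally, one extends $\nu_1$ from the Boolean subalgebra $B$ of $\Def(\M)$ to a finitely additive probability measure $\nu$ on all of $\Def(\M)$. This is a standard fact: any finitely additive probability measure on a subalgebra of a Boolean algebra extends to the whole algebra, proved either by a direct Zorn's lemma argument (extending one generator at a time and checking that the natural bounds $\sup \{\nu_1(V) : V \in B, V \subseteq U\} \leq \inf\{\nu_1(V) : V \in B, V \supseteq U\}$ leave room for any value in between) or by viewing $\nu_1$ as a positive linear functional on the span of indicator functions in $\ell^\infty(\Def(\M))$ and invoking Hahn--Banach. The resulting $\nu$ agrees with $\nu_0$ on $B_0$ and annihilates $I$, as required.

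The one subtlety worth highlighting is the well-definedness of $\nu_1$, which is exactly what the assumption ``$I \cap B_0$ is contained in the zero-ideal of $\nu_0$'' is designed to secure; everything else is routine Boolean algebra and a citation to the standard extension lemma.
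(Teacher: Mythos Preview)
Your proposal is correct and follows essentially the same route as the paper: define the extension $\nu'$ on $B$ by $\nu'(U)=\nu_0(V)$ for any $V\in B_0$ with $U\triangle V\in I$, check well-definedness via the hypothesis $I\cap B_0\subseteq\ker\nu_0$, verify finite additivity, and then invoke a standard extension lemma (the paper cites \cite[Lemma~7.3]{SimBook}, which is exactly the Hahn--Banach/Zorn fact you describe). The only cosmetic difference is that the paper checks the inclusion--exclusion identity for arbitrary $U_1,U_2$ rather than additivity on disjoint pairs.
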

\begin{proof}

It can be checked straightforwardly that $B$ is a Boolean algebra containing $B_0$ and $I$. Now for $U \in B$, let $\nu'(U) = \nu_0(V)$ where $V$ is some set in $B_0$ with $U \triangle V \in I$.

\begin{enumerate} 
\item $\nu'$ is well-defined. If we have some $V' \in B_0$ with $U \triangle V' \in I$, then $V \triangle V' \subseteq (U \triangle V) \cup (U \triangle V') \in I$, so $V \triangle V' \in I$; but by assumption this implies that $\nu_0(V \triangle V') = 0$, so $\nu_0(V) = \nu_0(V')$.
\item $\nu'$ is a measure on $B$ extending $\nu_0$. Given $U_i \in B, i\leq 2$, let $V_i \in B_0$ be such that $U_i \triangle V_i \in I, i\leq 2$. Then $\nu'( U_1 \cup U_2) = \nu( V_1 \cup V_2) = \nu(V_1) + \nu(V_2) - \nu(V_1 \cap V_2) = \nu'(U_1) + \nu'(U_2) - \nu'(U_1 \cap U_2)$, as wanted.
\end{enumerate}

Now $\nu'$ extends to a global measure $\nu$ by compactness, see e.g. \cite[Lemma 7.3]{SimBook}.
\end{proof}

\begin{prop} \label{prop: measures push pull}Let $X$ be a definable homogeneous $G$-space, and let $x_0$ be an arbitrary point in $X$.

\begin{enumerate}
\item Let $\tilde{\mu}$ be a measure on $G$. For every definable subset $\phi(x)$ of $X$, let $\mu(\phi(x)) = \tilde{\mu}(\phi(u \cdot x_0))$. Then $\mu$ is a measure on $X$. Moreover, if $\tilde{\mu}$ is $G$-invariant, then $\mu$ is $G$-invariant as well. If $\tilde{\mu}$ is also right-invariant, then $\mu$ does not depend on the choice of $x_0$.

\item Assume moreover that $G$ is definably amenable, NIP. Let $\mu$ be a $G$-invariant measure on $X$. Then there is some (possibly non-unique) $G$-invariant measure $\tilde{\mu}$ on $G$ such that the procedure from (1) induces $\mu$.
\end{enumerate} 
\end{prop}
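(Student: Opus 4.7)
Part (1) is a direct computation. The assignment $\phi\mapsto \phi(u\cdot x_0) := \{u\in G : u\cdot x_0\in\phi\}$ is a Boolean algebra embedding $\Def(X)\hookrightarrow\Def(G)$ (injectivity uses transitivity of the $G$-action on $X$), so $\mu$ inherits being a measure from $\tilde\mu$. The identity $g\cdot(\phi(u\cdot x_0)) = (g\cdot\phi)(u\cdot x_0)$ of subsets of $G$ transfers left $G$-invariance of $\tilde\mu$ to left $G$-invariance of $\mu$. Finally, if $x_1 = h\cdot x_0$ then $\phi(u\cdot x_1) = \phi(u\cdot x_0)\cdot h^{-1}$ as subsets of $G$, so when $\tilde\mu$ is also right-invariant the pushforward does not depend on $x_0$.

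For part (2), rather than constructing $\tilde\mu$ directly, I would reduce to lifting ergodic measures via $f$-generic types. The theory developed in Sections 3--4 for $G$ acting on itself applies \emph{mutatis mutandis} to the action on $X$, as noted at the start of this subsection. In particular, by the analog of Lemma \ref{lem_ApproxInvMeasByMuP} for $X$, the given $G$-invariant measure $\mu$ on $X$ is a weak-$\ast$ limit of convex combinations $\frac{1}{n}\sum_{i<n}\mu_{p_i}^X$, where each $p_i\in S_X(\M)$ is $f$-generic and $\mu_{p_i}^X$ is the $G$-invariant measure on $X$ attached to $p_i$ via the analog of Definition \ref{def: mu_p}. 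It therefore suffices to construct, for each $f$-generic $p\in S_X(\M)$, a $G$-invariant measure $\tilde\mu_p$ on $G$ with $\pi_*(\tilde\mu_p) = \mu_p^X$ (where $\pi_*$ is the pushforward from part (1)): the corresponding finite averages then lift, and any weak-$\ast$ limit point in the compact space $\mes_G(\M)$ is $G$-invariant and pushes forward to $\mu$ by continuity of $\pi_*$.

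To lift an individual $\mu_p^X$, consider the partial type $\tilde p_0 := \{\phi(u\cdot x_0) : \phi\in p\}$ in the sort of $G$. The key observation is that $\phi(u\cdot x_0)\subseteq G$ $G$-divides in $G$ if and only if $\phi\subseteq X$ $G$-divides in $X$: an indiscernible sequence $(g_i)$ witnessing $k$-inconsistency of $\{g_i\cdot(\phi(u\cdot x_0))\}$ in $G$ yields, via transitivity, the $k$-inconsistency of $\{g_i\cdot\phi\}$ in $X$, and conversely. Since $p$ is $f$-generic in $X$, no $\phi\in p$ $G$-divides in $X$, so $\tilde p_0$ is a partial $f$-generic type in $G$ by Proposition \ref{prop_Gdivide}. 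By Corollary \ref{prop_ideal} it extends to a global $f$-generic type $\tilde p\in S_G(\M)$, and completeness forces $\pi_*(\tilde p) = p$. Then Definition \ref{def: mu_p} gives
$$\mu_{\tilde p}(\phi(u\cdot x_0)) = h_0\bigl(\{\bar g\in G/G^{00} : \phi(u\cdot x_0)\in g\cdot\tilde p\}\bigr) = h_0\bigl(\{\bar g : \phi\in g\cdot p\}\bigr) = \mu_p^X(\phi),$$
so $\pi_*(\mu_{\tilde p}) = \mu_p^X$, as required. The non-uniqueness of $\tilde\mu$ asserted in the conclusion reflects the non-uniqueness of the $f$-generic extension $\tilde p$ of $\tilde p_0$.

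The main obstacle is really bookkeeping: one must verify that Proposition \ref{prop_Gdivide}, Proposition \ref{lem_PosGInvMeasImpliesFGen}, Definition \ref{def: mu_p} and Lemma \ref{lem_ApproxInvMeasByMuP} transfer from the self-action of $G$ to the action on $X$. This is routine since the connected component $G^{00}$, the Haar measure $h_0$ on $G/G^{00}$, and the notion of $G$-dividing of a definable subset all remain intrinsic to $G$ and its model-theoretic structure (not to the set being acted on), and the VC-theoretic arguments of Section \ref{sec: approximation lemmas} proceed identically.
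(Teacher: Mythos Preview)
Your Part~(1) is essentially the same computation as the paper's.

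For Part~(2), your argument is correct but takes a genuinely different route from the paper. The paper works directly on the $G$-side: it defines $\nu_0$ on the subalgebra $B_0 = \{\phi(u\cdot x_0) : \phi\in\Def(X)\}$ by $\nu_0(\phi(u\cdot x_0)) = \mu(\phi)$, then uses Lemma~\ref{lem: extending measure by an ideal} (with $I$ the ideal of non-$f$-generic sets) to extend $\nu_0$ to a global measure $\nu$ whose support consists of $f$-generic types; this $\nu$ is then $G^{00}$-invariant, and averaging $g\mapsto \nu(g\cdot\phi)$ against the Haar measure on $G/G^{00}$ produces the desired $G$-invariant $\tilde\mu$. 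Your approach instead transfers the structure theory to $X$, lifts each $f$-generic $p\in S_X(\M)$ to an $f$-generic $\tilde p\in S_G(\M)$ (so that $\mu_{\tilde p}$ pushes forward to $\mu_p^X$), and recovers a lift of $\mu$ as a weak-$*$ limit of the lifted averages coming from the analog of Lemma~\ref{lem_ApproxInvMeasByMuP}. The paper's argument is more self-contained---it does not rely on having checked that the full machinery of Sections~\ref{sec: ErgodicMeasures}--\ref{sec: ergodicity} transfers to $X$---while yours is more conceptual, exhibiting the lift at the level of ergodic pieces and making the non-uniqueness transparent. Note also that the $G^{00}$-invariance of an $f$-generic $p\in S_X(\M)$ that you need for $\mu_p^X$ to make sense can be read off directly from $\pi_*(\tilde p)=p$ and the $G^{00}$-invariance of $\tilde p$, so you need not invoke the $X$-analog of Proposition~\ref{prop:f-gen iff G00-inv} separately.
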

\begin{proof}
(1) It is clearly a measure as $\mu(\emptyset) = \tilde{\mu}(\emptyset)$, $\mu(X) = \tilde{\mu}(G)$ and if $\phi_i(x), i < n$ are disjoint subsets of $X$, then $\phi_i(u\cdot x_0), i<n$ are disjoint subsets of $G$. If $\tilde{\mu}$ is $G$-invariant then for any $g\in G$ we have $\mu( \phi(g^{-1} \cdot x)) = \tilde{\mu}( \phi(g^{-1} \cdot u \cdot x_0)) = \tilde{\mu}(\phi(u \cdot x_0)) = \mu(\phi(x))$.

Finally, assume that $\tilde{\mu}$ is also right invariant. Let $x_1 \in X$ and $\phi(x)$ be arbitrary, then by transitivity of the action there is some $g \in G$ such that $x_1 = g \cdot x_0$. We have $\tilde{\mu}(\phi(u \cdot x_1)) = \tilde{\mu}(\phi(u \cdot (g \cdot x_0))) = \tilde{\mu}(\phi((u \cdot g) \cdot x_0 )) = \tilde{\mu}(\phi(u \cdot x_0) \cdot g^{-1}) = \tilde{\mu}(\phi(u \cdot x_0))$, as wanted.

(2) Now let $\mu$ be a $G$-invariant measure on $X$, and fix $x_0 \in X$. Let $B_0 \subseteq \Def_G(\M)$ be the family of subsets of $G$ of the form $\{ g \in G: g \cdot x_0 \in Y\}$, where $Y$ is a definable subset of $X$. For $U \in B_0$, define $\nu_0(U) = \mu(Y)$. The following can be easily verified using that $\mu$ is a $G$-invariant measure:

\textbf{Claim.}
The family $B_0$ is a Boolean algebra closed under $G$-translates and $\nu_0$ is a $G$-invariant measure on $B_0$.

Next, let $I \subseteq \Def_G(\M)$ be the collection of all non-$f$-generic definable subsets of $G$. We know by Corollary \ref{prop_ideal} that it is an ideal. As in Proposition \ref{lem_PosGInvMeasImpliesFGen}, $B_0 \cap I$ is contained in the zero-ideal of $\nu_0$. Then, applying Lemma \ref{lem: extending measure by an ideal}, we obtain a global measure $\nu$ on $\Def_G(\M)$ extending $\nu_0$ and such that all types in its support are $f$-generic. Note that $\nu$ is $G^{00}$-invariant: for any $\phi(x) \in L(\M)$ and $\varepsilon > 0$ there are some $p_0, \ldots, p_{n-1} \in S(\nu)$ such that for any $g \in G$,  $\nu(g \phi(x)) \approx^{\varepsilon} \Av(p_0, \ldots, p_{n-1}; g \phi(x))$ (by Fact \ref{fac: measure is average of types}), and each $p_i$ is $G^{00}$-invariant (by Proposition \ref{prop:f-gen iff G00-inv}). Consider the map $f_{\phi}: G/G^{00} \to \mathbb{R}, \overline{g} \mapsto \nu(g \phi(x))$. It is well-defined  and $h_0$-measurable (using an argument as in the proof of Lemma \ref{lem_measurability}). Finally, we define $\tilde{\mu}(\phi(x)) = \int_{g \in G/G^{00}} f_\phi(g) \textrm{d} h_0$. It is easy to check that $\tilde{\mu}$ is a $G$-invariant measure on $\Def_G(\M)$ (and that the procedure from (1) applied to $\tilde{\mu}$ returns $\mu$).
\end{proof}




\bibliography{common}

\def\cprime{$'$}
\begin{thebibliography}{Sim15b}

\bibitem[BGT12]{breuillard2012}
Emmanuel Breuillard, Ben Green, and Terence Tao.
\newblock The structure of approximate groups.
\newblock {\em Publications math{\'e}matiques de l'IH{\'E}S}, 116(1):115--221,
  Nov 2012.

\bibitem[CK12]{CheKap}
Artem Chernikov and Itay Kaplan.
\newblock Forking and dividing in ${NTP_2}$ theories.
\newblock {\em J. Symbolic Logic}, 77(1):1--20, 2012.

\bibitem[CP12]{AnnalisaAnand}
Annalisa Conversano and Anand Pillay.
\newblock Connected components of definable groups and {$o$}-minimality {I}.
\newblock {\em Adv. Math.}, 231(2):605--623, 2012.

\bibitem[CPS14]{ChePilSim}
Artem Chernikov, Anand Pillay, and Pierre Simon.
\newblock External definability and groups in {NIP} theories.
\newblock {\em Journal of the London Mathematical Society}, 90(1):213--240,
  2014.

\bibitem[CS13]{ExtDefI}
Artem Chernikov and Pierre Simon.
\newblock Externally definable sets and dependent pairs.
\newblock {\em Israel Journal of Mathematics}, 194(1):409--425, 2013.

\bibitem[Gla07a]{Glasner1}
Eli Glasner.
\newblock Enveloping semigroups in topological dynamics.
\newblock {\em Topology and its Applications}, 154(11):2344 -- 2363, 2007.

\bibitem[Gla07b]{glasner2007structure}
Eli Glasner.
\newblock The structure of tame minimal dynamical systems.
\newblock {\em Ergodic Theory and Dynamical Systems}, 27(06):1819--1837, 2007.

\bibitem[GPP15]{gismatullin2012some}
Jakub Gismatullin, Davide Penazzi, and Anand Pillay.
\newblock Some model theory of {$SL(2,\mathbb{R})$}.
\newblock {\em Fundamenta Mathematicae}, 229(2):117--128, 2015.

\bibitem[Hal50]{halmos1950measure}
Paul~Richard Halmos.
\newblock {\em Measure theory}, volume~2.
\newblock van Nostrand New York, 1950.

\bibitem[HP11]{NIP2}
Ehud Hrushovski and Anand Pillay.
\newblock On {NIP} and invariant measures.
\newblock {\em J. Eur. Math. Soc. (JEMS)}, 13(4):1005--1061, 2011.

\bibitem[HPP08]{NIP1}
Ehud Hrushovski, Ya'acov Peterzil, and Anand Pillay.
\newblock Groups, measures, and the {NIP}.
\newblock {\em J. Amer. Math. Soc.}, 21(2):563--596, 2008.

\bibitem[HPS13]{HruPilSimMeas}
Ehud Hrushovski, Anand Pillay, and Pierre Simon.
\newblock Generically stable and smooth measures in {NIP} theories.
\newblock {\em Trans. Amer. Math. Soc.}, 365(5):2341--2366, 2013.

\bibitem[Hru90]{unidimensional}
Ehud Hrushovski.
\newblock Unidimensional theories are superstable.
\newblock {\em Annals of Pure and Applied Logic}, 50(2):117 -- 137, 1990.

\bibitem[Hru96]{MordellLang}
Ehud Hrushovski.
\newblock The {M}ordell-{L}ang conjecture for function fields.
\newblock {\em J. Amer. Math. Soc}, 9:667--690, 1996.

\bibitem[Hru01]{hrushovski2001manin}
Ehud Hrushovski.
\newblock The manin--mumford conjecture and the model theory of difference
  fields.
\newblock {\em Annals of Pure and Applied Logic}, 112(1):43--115, 2001.

\bibitem[Hru12]{hr_appx}
Ehud Hrushovski.
\newblock Stable group theory and approximate subgroups.
\newblock {\em J. Amer. Math. Soc.}, 25(1):189--243, 2012.

\bibitem[Jer54]{Jerison1954}
Meyer Jerison.
\newblock A property of extreme points of compact convex sets.
\newblock {\em Proceedings of the American Mathematical Society},
  5(5):782--783, 1954.

\bibitem[Kec95]{Kechris1995}
Alexander~S Kechris.
\newblock {\em Classical descriptive set theory}, volume 156.
\newblock Springer-Verlag New York, 1995.

\bibitem[KL07]{kerr2007independence}
David Kerr and Hanfeng Li.
\newblock Independence in topological and c*-dynamics.
\newblock {\em Mathematische Annalen}, 338(4):869--926, 2007.

\bibitem[Las92]{LasVC}
Michael~C. Laskowski.
\newblock Vapnik-{C}hervonenkis classes of definable sets.
\newblock {\em J. London Math. Soc. (2)}, 45(2):377--384, 1992.

\bibitem[Mat04]{Matousek}
Ji{\v{r}}{\'{\i}} Matou{\v{s}}ek.
\newblock Bounded {VC}-dimension implies a fractional {H}elly theorem.
\newblock {\em Discrete Comput. Geom.}, 31(2):251--255, 2004.

\bibitem[MS14]{medvedev-scanlon}
Alice Medvedev and Thomas Scanlon.
\newblock Invariant varieties for polynomial dynamical systems.
\newblock {\em Annals of Mathematics}, 179(1):81--177, 2014.

\bibitem[New09]{New4}
Ludomir Newelski.
\newblock Topological dynamics of definable group actions.
\newblock {\em J. Symbolic Logic}, 74(1):50--72, 2009.

\bibitem[New12]{New3}
Ludomir Newelski.
\newblock Bounded orbits and measures on a group.
\newblock {\em Israel J. Math.}, 187:209--229, 2012.

\bibitem[NP06]{NewPetr}
Ludomir Newelski and Marcin Petrykowski.
\newblock Weak generic types and coverings of groups. {I}.
\newblock {\em Fund. Math.}, 191(3):201--225, 2006.

\bibitem[Phe01]{phelps2001lectures}
Robert~R Phelps.
\newblock {\em Lectures on Choquet's theorem}, volume 1757.
\newblock Springer, 2001.

\bibitem[Pil04]{PillayLogicTop}
Anand Pillay.
\newblock Type-definability, compact {L}ie groups, and o-minimality.
\newblock {\em Journal of Mathematical Logic}, 4(02):147--162, 2004.

\bibitem[Poi87]{poizat1987groupes}
Bruno Poizat.
\newblock {\em Groupes stables: une tentative de conciliation entre la
  g{\'e}om{\'e}trie alg{\'e}brique et la logique math{\'e}matique}, volume~2.
\newblock Launay, 1987.

\bibitem[Poi01]{PoizatStableGroups}
Bruno Poizat.
\newblock {\em Stable groups}, volume~87 of {\em Mathematical Surveys and
  Monographs}.
\newblock American Mathematical Society, Providence, RI, 2001.
\newblock Translated from the 1987 French original by Moses Gabriel Klein.

\bibitem[PY16]{pillay2016minimal}
Anand Pillay and Ningyuan Yao.
\newblock On minimal flows, definably amenable groups, and o-minimality.
\newblock {\em Advances in Mathematics}, 290:483--502, 2016.

\bibitem[Sel13]{Sela-stable}
Zlil Sela.
\newblock Diophantine geometry over groups viii: Stability.
\newblock {\em Annals of Mathematics}, 177(3):787--868, 2013.

\bibitem[She71]{Sh10}
Saharon Shelah.
\newblock Stability, the f.c.p., and superstability; model theoretic properties
  of formulas in first order theory.
\newblock {\em Ann. Math. Logic}, 3(3):271--362, 1971.

\bibitem[She08]{shelah2008minimal}
Saharon Shelah.
\newblock Minimal bounded index subgroup for dependent theories.
\newblock {\em Proceedings of the American Mathematical Society},
  136(3):1087--1091, 2008.

\bibitem[She09]{shelah2009dependent}
Saharon Shelah.
\newblock Dependent first order theories, continued.
\newblock {\em Israel Journal of Mathematics}, 173(1):1--60, 2009.

\bibitem[Sim11]{simon2011convexity}
Barry Simon.
\newblock {\em Convexity: An analytic viewpoint}, volume 187.
\newblock Cambridge University Press, 2011.

\bibitem[Sim15a]{SimBook}
Pierre Simon.
\newblock {\em A guide to NIP theories}.
\newblock Cambridge University Press, 2015.

\bibitem[Sim15b]{Simon2014rosenthal}
Pierre Simon.
\newblock Rosenthal compacta and {NIP} formulas.
\newblock {\em Fundamenta Mathematicae}, 231(1):81--92, 2015.

\bibitem[Sta16]{starchenko2016nip}
Sergei Starchenko.
\newblock {NIP}, {K}eisler measures and combinatorics.
\newblock {\em S{\'e}minaire BOURBAKI}, page~68, 2016.

\bibitem[VC71]{vapnik1971uniform}
Vladimir~N Vapnik and A~Ya Chervonenkis.
\newblock On the uniform convergence of relative frequencies of events to their
  probabilities.
\newblock {\em Theory of Probability \& Its Applications}, 16(2):264--280,
  1971.

\bibitem[Wag00]{WagnerBook}
Frank~O. Wagner.
\newblock {\em Simple theories}, volume 503 of {\em Mathematics and its
  Applications}.
\newblock Kluwer Academic Publishers, Dordrecht, 2000.

\bibitem[Wal00]{walters2000introduction}
Peter Walters.
\newblock {\em An introduction to ergodic theory}, volume~79.
\newblock Springer, 2000.

\bibitem[Zil]{zilberuncountably}
B.~Zilber.
\newblock {\em Uncountably Categorical Theories}.
\newblock Translations of mathematical monographs. American Mathematical
  Society.

\end{thebibliography}

\end{document}